\newtheorem{theorem}{Theorem}
\newtheorem*{theorem*}{Theorem}
\numberwithin{theorem}{section}
\newtheorem{proposition}[theorem]{Proposition}
\newtheorem{corollary}[theorem]{Corollary}
\newtheorem{lemma}[theorem]{Lemma}
\theoremstyle{definition}
\newtheorem{definition}[theorem]{Definition}
\newtheorem*{definition*}{Definition}
\newtheorem{remark}[theorem]{Remark}
\newtheorem{notation}[theorem]{Notation}
\newtheorem{observation}[theorem]{Observation}
\newtheorem{example}[theorem]{Example}
 \newtheorem{problem}[theorem]{Problem}
\newcommand{\rr}{\mathbb{R}}
\newcommand{\dd}{\mathbb{D}}
\newcommand{\ind}{\mathbbm{1}}
\newcommand{\ci}{\mathcal{I}}
\newcommand{\cj}{\mathcal{J}}
\newcommand{\cm}{\mathcal{M}}
\newcommand{\ct}{\mathcal{T}}
\newcommand{\cg}{\mathcal{G}}
\newcommand{\ch}{\mathcal{H}}
\newcommand{\cx}{\mathcal{X}}
\renewcommand{\vec}[1]{\boldsymbol{#1}}
\newcommand{\vX}{\vec{X}}
\newcommand{\vx}{\vec{x}}
\newcommand{\vY}{\vec{Y}}
\newcommand{\vy}{\vec{y}}
\newcommand{\from}{\leftarrow}
\newcommand{\eye}{\mathrm{I}}
\newcommand\independent{\protect\mathpalette{\protect\independenT}{\perp}}
\def\independenT#1#2{\mathrel{\rlap{$#1#2$}\mkern2mu{#1#2}}}
\tikzstyle{wB}=[circle, draw, fill=black, inner sep=0pt, minimum width=4.5pt]
\tikzstyle{wR}=[circle, draw, fill=red, inner sep=0pt, minimum width=4.5pt]
\tikzstyle{wW}=[circle, draw, fill=white, inner sep=0pt, minimum width=4.5pt]
\DeclareMathOperator{\CIM}{CIM}
\DeclareMathOperator{\cim}{CIM}
\DeclareMathOperator{\SIM}{SIM}
\DeclareMathOperator{\pa}{pa}
\DeclareMathOperator{\fa}{fam}
\DeclareMathOperator{\conv}{conv}
\DeclareMathOperator{\skel}{skel}
\DeclareMathOperator{\diag}{diag}
\DeclareMathOperator{\Phil}{Fill}
\DeclareMathOperator{\BIC}{BIC}
\DeclareMathOperator{\pool}{pool}
\DeclareMathOperator{\interior}{int}
\definecolor{benpurple}{RGB}{180, 0, 240}
\definecolor{joegreen}{RGB}{18, 135, 8}
\begin{document}

\title[Interventional Characteristic Imset Polytopes]{Hyperplane Representations of Interventional Characteristic Imset Polytopes}

\date{\today}


\author{Benjamin Hollering}
\address{Technische Universit\"at M\"unchen, 85748 Garching b. München, Boltzmannstr. 3.,  Germany}
\email{benjamin.hollering@tum.de}

\author{Joseph Johnson}
\address{Institutionen f\"or Matematik, KTH, SE-100 44 Stockholm, Sweden}
\email{josjohn@kth.se}

\author{Liam Solus}
\address{Institutionen f\"or Matematik, KTH, SE-100 44 Stockholm, Sweden}
\email{solus@kth.se}

\begin{abstract}
Characteristic imsets are $0/1$-vectors representing directed acyclic graphs whose edges represent direct cause-effect relations between jointly distributed random variables. 
A characteristic imset (CIM) polytope is the convex hull of a collection of characteristic imsets. 
CIM polytopes arise as feasible regions of a linear programming approach to the problem of causal disovery, which aims to infer a cause-effect structure from data. 
Linear optimization methods typically require a hyperplane representation of the feasible region, which has proven difficult to compute for CIM polytopes despite continued efforts. 
We solve this problem for CIM polytopes that are the convex hull of imsets associated to DAGs whose underlying graph of adjacencies is a tree. 
Our methods use the theory of toric fiber products as well as the novel notion of interventional CIM polytopes. 
Our solution is obtained as a corollary of a more general result for interventional CIM polytopes. 
The identified hyperplanes are applied to yield a linear optimization-based causal discovery algorithm for learning polytree causal networks from a combination of observational and interventional data.
\end{abstract}

\keywords{characteristic imset, H-representation, polytope, toric fiber product, causal discovery}
\subjclass[2020]{62E10, 62H22, 62D20, 62R01, 13P25, 13P10}

\maketitle
\thispagestyle{empty}


\section{Introduction}

This paper studies H-representations of V-polytopes, known as characteristic imset (CIM) polytopes, relevant to the problem of causal discovery in statistics and machine learning.
A directed acyclic graphical (DAG) model is a collection of probability distributions $\cm(\cg)$ associated to a DAG $\cg = ([p], E)$ where $[p]:=\{1,\ldots,p\}$ is the node set of $\cg$ and $E$ is the set of edges. 
DAG models are popular tools for both statistical \cite{koller2009probabilistic} and causal inference \cite{pearl2009causality}, with applications arising in biological sciences, medicine, sociology, economics and other industries \cite{beinlich1989alarm, friedman2000using, robins2000marginal, sachs2005causal}. 
The problem of \emph{causal discovery} is to infer the DAG $\cg$ using data drawn from a distribution $\vX = [X_1,\ldots, X_p]^T$ contained in the model $\cm(\cg)$. 

It is, however, possible that $\cm (\cg)=\cm (\ch)$ for distinct DAGs $\cg$ and $\ch$. 
In this case, $\cg$ and $\ch$ are called \emph{Markov equivalent}, and the collection of DAGs Markov equivalent to $\cg$ are called its \emph{Markov equivalence class} (MEC). 
From observational data alone, the DAG $\cg$ is identifiable from a random sample drawn from $\vX\in\cm(\cg)$ only up to Markov equivalence. 
However, with the help of additional data obtained through experiments that perturb targeted sets of variables, we may refine the MEC of $\cg$, thereby improving structural identifiability. 
Each such \emph{intervention target} is a set $I_k \subseteq [p]$, which we collect into a sequence $\ci = (I_0, I_1,\ldots, I_K)$. The resulting equivalence classes of DAGs are called \emph{$\ci$-Markov equivalence classes} ($\ci$-MECs) \cite{hauser2012characterization, wang2017permutation, yang2018characterizing}. The problem of causal discovery is then well-defined as:

\begin{problem}[Causal Discovery]
Given a random sample from a distribution $\vX = [X_1,\ldots, X_p]^T\in\cm(\cg)$, infer the MEC of $\cg$. 
Given experimental data for the intervention targets $\ci = (I_0, I_1,\ldots, I_K)$, identify the $\ci$-MEC of $\cg$. 
\end{problem}

Algorithms for estimating a MEC (or $\ci$-MEC), are called \emph{causal discovery algorithms}, and they are typically broken down by their underlying methodology and assumptions on the data. 
Common methodologies include \emph{constraint-based methods}, such as the classic \emph{PC algorithm} \cite{spirtes1991algorithm}, which treat causal discovery as a constraint satisfaction problem implemented via statistical hypothesis tests. 
Alternatively, \emph{greedy optimization methods}, such as the \emph{Greedy Equivalence Search (GES)} \cite{chickering2002optimal}, assign a score to each MEC and then use graph transformations to search over the space of all MECs until a score-optimal MEC is found. 
Finally, there are the \emph{hybrid methods}, which use a mixture of constraint-based hypothesis testing and greedy optimization. 
Examples include the \emph{Max-Min Hill Climbing (MMHC)} algorithm \cite{tsamardinos2006max}, which uses constraint-based tests to estimate a set of possible edges that could appear in the DAGs in the true MEC and then applies greedy-hill climbing methods to search for an optimal MEC given these constraints. 
More recent examples include \emph{GreedySP} \cite{solus2017consistency}, \emph{GrASP} \cite{lam2022greedy} and \emph{BOSS} \cite{andrews2023fast}. 
These latter algorithms rely on intuition from convex geometry, as they search over candidate MECs associated to the vertices of a convex polytope by greedily optimizing the score via a walk along the edge graph of the polytope. 

Convex geometry also appears in causal discovery via the linear programming approach proposed by Studen\'y. 
In \cite{hemmecke2012characteristic}, Hemmecke, Studen\'y and Lindner defined a unique $0/1$-vector for each MEC of DAGs called its \emph{characteristic imset vector}.
We let $\CIM_\mathcal{S}$ denote the convex hull of a subset $\mathcal{S}$ of characteristic imsets and $\CIM_p$ denote the convex hull of all imsets for DAGs on nodes $[p]$. 
The \emph{characteristic imset (CIM) polytope} $\CIM_\mathcal{S}$ is then the feasible region of a linear program.
Studen\'y showed that classic score functions are expressible as linear functions $\alpha\cdot x$ over these polytopes, where $\alpha$ is a vector depending only on the data and pre-specified modeling assumptions \cite{studeny2006probabilistic}. 

Employing linear optimization techniques typically requires a representation of $\CIM_\mathcal{S}$ as an intersection of half-spaces; e.g., an \emph{H-representation} of $\CIM_{\mathcal{S}}$. 
As is well-known in convex geometry and combinatorial optimization, it is often difficult to recover the H-representation of a polytope from its \emph{V-representation}; i.e., its representation as a convex hull of finitely many points \cite{ziegler2012lectures}. 
This difficulty is observed for CIM polytopes, where only partial results and conjectures regarding H-representations exist for interesting choices of $\mathcal{S}$ \cite{lindner2012discrete, studeny2017towards}.
In the special case that one fixes a variable ordering and a directed bipartite graph structure, Xi and Yoshida showed that the corresponding CIM polytope is a product of simplices \cite{xi2015characteristic}. 


To obtain more general results, we draw intuition from the historical development of causal discovery, which began in the case where the underlying causal DAG has adjacencies specified by a tree $G= ([p],E)$. 
One of our main results is an H-representation of $\CIM_G$, which denotes the convex hull of all characteristic imsets of DAGs whose \emph{skeleton}, e.g, its undirected graph of adjacencies, is equal to $G$. 
Hence, our results offer a novel two-phased optimization approach to causal discovery with a first phase that estimates a tree skeleton $\hat G$ for the data-generating DAG and a second phase which solves the linear program of Studen\'y over $\CIM_{\hat G}$. 
We note that the assumption of a tree skeleton is not new, having been used in \cite{jakobsen2022structure,LRS2022b,rebane2013recovery}.

As a second contribution, we introduce the \emph{interventional CIM polytopes} for learning $\ci$-MECs in Section~\ref{sec:interventionalCIMpolytopes}.
The interventional CIM polytope $\CIM_\mathcal{S}^\ci$ is the convex hull of characteristic imsets of $\ci$-DAGs with imsets in $\mathcal{S}$ for experiments with interventions targeting $\ci = (I_0, I_1,\ldots, I_K)$. 
Our results on $\CIM_G$ are a corollary to our results on $\CIM_G^\ci$, which are summarized as follows.

\begin{theorem}
\label{thm:mainthm}
Let $G$ be an undirected tree and $\mathcal{I} = (\emptyset, \{i_1\},\ldots, \{i_K\})$ a sequence of intervention targets where $i_1,\ldots, i_K$ are leaves of $G$. 
The CIM polytope $\CIM_G^\mathcal{I}$ has H-representation given by
\begin{itemize}
	\item the star inequalities (Definition~\ref{def:starinequality}),
	\item the bidirected-edge inequalities (Definition~\ref{definition:bidirectedEdgeInequality}), and
	\item the forked-tree inequalities (Definition~\ref{def:forkedinequality}).
\end{itemize}
\end{theorem}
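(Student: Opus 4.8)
The plan is to proceed in two phases. First I would establish that the three families of inequalities are \emph{valid} for $\CIM_G^{\mathcal{I}}$, and then I would prove \emph{completeness}, namely that together with the equalities defining its affine hull they cut out the polytope exactly. For validity, since $\CIM_G^{\mathcal{I}}$ is by definition the convex hull of the characteristic imsets of the $\mathcal{I}$-DAGs whose skeleton is $G$, it suffices to check that every such imset (equivalently, every vertex) satisfies each star, bidirected-edge, and forked-tree inequality. I expect this to reduce to a direct combinatorial computation against the defining formula for the coordinates of a characteristic imset, handled separately for the three families and using crucially that the intervention targets are singletons placed at leaves of $G$.

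The completeness direction is where the real work lies, and the plan is to use toric fiber products (TFPs) to run an induction on the number of edges of $G$. First I would settle a base case, presumably a single star together with its interventions, by verifying directly that the listed inequalities and the affine-hull equalities have the correct number of facets and recover precisely the expected vertex set. For the inductive step, I would decompose $G$ along a cut vertex $v$ into two smaller trees $G_1$ and $G_2$, split the intervention sequence $\mathcal{I}$ into $\mathcal{I}_1$ and $\mathcal{I}_2$ accordingly, and show that $\CIM_G^{\mathcal{I}}$ is realized as the toric fiber product of $\CIM_{G_1}^{\mathcal{I}_1}$ and $\CIM_{G_2}^{\mathcal{I}_2}$ taken along the coordinates indexed by $v$.

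The crux is then a transfer result for TFPs: assuming the two factors admit H-representations consisting only of star, bidirected-edge, and forked-tree inequalities, I want to conclude that the product does as well. Here each facet of the TFP should arise either as the lift of a facet of one factor --- a star or bidirected-edge inequality supported away from $v$, or a forked-tree inequality living entirely inside $G_1$ or $G_2$ --- or as a genuinely new inequality created at the gluing vertex, which is exactly the source of the forked-tree inequalities whose support crosses the cut. The main obstacle is controlling this new class: one must show both that every forked-tree inequality spanning the two pieces is recovered and, conversely, that no further valid inequalities appear as facets of the product. I expect this to require a careful dimension count via the affine-hull equalities together with a combinatorial irredundancy argument, ensuring the facet count matches exactly so that no facet is overlooked and none is spurious.
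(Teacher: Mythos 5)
Your validity phase and your base case for stars agree with what the paper does (Section~\ref{section:starGraphs}, culminating in Theorem~\ref{thm:FacetsOfStarTrees}), but your inductive step rests on a claim that is false: $\CIM_G^{\ci}$ is \emph{not} a toric fiber product of $\CIM_{G_1}^{\ci_1}$ and $\CIM_{G_2}^{\ci_2}$ when you split $G$ at an ordinary cut vertex $v$. The coordinates of $\CIM_G^{\ci}$ indexed by sets containing $v$ and meeting both sides of the cut (which record v-structures at $v$ with one parent in each piece) are coordinates of neither factor, and they are not affine functions of the factor coordinates. A minimal counterexample: let $G$ be the path $a - v - b$ with $\ci = (\emptyset)$, cut at $v$. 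Each factor is a single edge, which carries a unique MEC, so $\CIM_{G_1}$ and $\CIM_{G_2}$ are single points; any toric fiber product of them, for any choice of $Q$, $\pi_1$, $\pi_2$, is therefore at most a point. But $\CIM_G$ is a segment, with the coordinate $x_{avb}$ separating the v-structure $a \to v \from b$ from the other Markov equivalence class. More generally, a TFP has vertex set contained in the set of pairs of factor vertices, while the vertex set of $\CIM_G^{\ci}$ surjects onto, but is strictly larger than, this set of pairs: orientations of edges at $v$ on a side where $v$ has at most one parent are invisible to that factor's imset, yet they change the cross-cut coordinates. So no transfer lemma can rescue this step; the decomposition itself does not exist.

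The paper's route is designed precisely to dodge this obstruction, and it has a second stage that your proposal omits entirely. Instead of cutting at a bare vertex, the paper glues star polytopes at auxiliary \emph{intervened} degree-two nodes $j$ (the set $J$ in a gluing tree, Definition~\ref{defn:GluingTree}): the intervention edge $j' \to j$ makes both edges at $j$ essential, the polytope $\cim_T^{I,J}$ is by definition restricted to imsets with $i \to j \to k$ or $i \from j \from k$, so the cross-cut coordinates $x_{ijk}$ and $x_{ijj'k}$ vanish identically and the two sides communicate only through the binary coordinate $x_{ijj'} = 1 - x_{jj'k}$. This realizes $\cim_T^{I,J}$ as a TFP over a segment, i.e.\ over a simplex, so Lemma~\ref{lemma:TFPHRep} applies and the H-representation of the glued polytope is just the union of the factor inequalities together with one equation (Corollary~\ref{corollary:hyperplane}, Theorem~\ref{theorem:facetsseparatedgluingtree}); contrary to your expectation, no new facets are created at the gluing stage. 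The inequalities whose support crosses the cut (the bidirected-edge inequality for $i-k$ and the forked-tree inequalities spanning several stars) arise only in the second stage: Fourier--Motzkin elimination of the auxiliary coordinates (Section~\ref{section:eliminatingInterventions}), which projects $\cim_T^{I,J}$ onto $\cim_X^{I,J\setminus\{j\}}$ and requires a seven-case analysis showing that every eliminated combination is either one of the claimed inequalities for the contracted tree or redundant. That elimination argument is the bulk of the proof of Theorem~\ref{theorem:facetsOfCIMTIJ}, from which Theorem~\ref{thm:mainthm} follows; without it, and without the TFP structure that only the interventions can provide, your induction cannot close.
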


The H-representation for $\CIM_G$ is obtained from Theorem~\ref{thm:mainthm} by taking $\ci = (\emptyset)$. 
The proof of Theorem~\ref{thm:mainthm} follows from Theorem~\ref{theorem:facetsOfCIMTIJ}, which captures a slightly more general result. 
These results are obtained by realizing the relevant interventional CIM polytopes as \emph{toric fiber products} \cite{sullivant2007toric}. 
This technique for identifying H-representations of CIM polytopes is novel, and may admit broader applications. 
Finally, in Section~\ref{sec:applications}, we derive the data vector $\alpha$ for Gaussian $\ci$-DAG models and apply the causal discovery algorithm resulting from Theorem~\ref{thm:mainthm} to learn an $\ci$-MEC on a real data example. 
Since the CIM-polytopes used in our method are shown to be \emph{quasi-independence gluings} (QIGs) in \cite{hollering2022toric}, our causal discovery algorithm is called QIGTreeLearn.
We end with a brief discussion of future directions motivated by this work.

\section{Preliminaries}

\subsection{DAG models}\label{subsec:dagmodels}

Let $\vX = [X_1,\ldots, X_p]^T$ be a vector of jointly distributed random variables with joint probability density (or mass) function $f_{\vX}(\vx)$, where $\vx = [x_1,\ldots, x_p]^T$ denotes an outcome (or realization) of the random vector $\vX$.
Given a subset $S\subseteq[p]$ we let $\vX_S = [X_i : i\in S]^T$ denote the subvector of $\vX$ consisting of those variables with indices in $S$.  
Given disjoint subsets $A,B\subseteq [p]$, we let $f_{\vX_A}(\vx_A)$ denote the probability density (or mass) function of $\vX_A$; e.g., the density function for the marginal distribution $\vX_A$:
We further let $f_{\vX_A | \vX_B}(\vx_A | \vx_B)$ denote the density (mass) function of the conditional distribution of $\vX_A$ given $\vX_B = \vx_B$. 


Let $\cg = ([p], E)$ be a directed acyclic graph (DAG) with vertex set $[p] := \{1,\ldots, p\}$ and set of edges $E$. 
For $i\in[p]$ we denote the set of \emph{parents} of $i$ in $\cg$ by
\[
\pa_\cg(i) := \{j\in [p] : j\rightarrow i \in E\}. 
\]
A distribution $\vX = [X_1,\ldots, X_p]^T$ is \emph{Markov} to the DAG $\cg$ if 
\[
f_{\vX}(\vx) = \prod_{i=1}^p f_{X_i | \vX_{\pa_\cg(i)}}(x_i | \vx_{\pa_\cg(i)}).
\]
The set $\cm(\cg)$ of all distributions Markov to the DAG $\cg$ is called the \emph{DAG model} of $\cg$. 

\begin{example}\label{ex:gaussians}
It is typical to consider only those distributions drawn from a specific, parametrized family when modeling a data-generating distribution. 
For instance, we may be interested in modeling with a multivariate normal (Gaussian) distribution. 
In this case, we restrict our attention to the \emph{Gaussian DAG model}, also denoted $\cm(\cg)$ for convenience, consisting of all multivariate normal distributions that are Markov to the DAG $\cg$. 
Distributions in the Gaussian DAG model $\cm(\cg)$ may be parameterized by $(\vec{\lambda}, \vec{\omega})\in\rr^{E}\times \rr^{p}_{>0}$ and correspond to all random vectors  $\vX$ given by the system of equations
\[
X_i = \sum_{j\in\pa_\cg(i)}\lambda_{ij}X_j + \varepsilon_i, \qquad \mbox{for all $i\in[p]$,}
\]
where $\varepsilon_1,\ldots, \varepsilon_p$ are independent univariate normal random variables for which $\varepsilon_i$ has variance $\omega_i$. 
Following standard practices, we may also assume that $\varepsilon_i$ has mean $0$ for all $i\in[p]$. 

Encoding the parameters $(\vec{\lambda}, \vec{\omega})$ as a pair of matrices $\Lambda = [\lambda_{ij}]$ where $\lambda_{ij} = 0$ whenever $j\rightarrow i\notin E$ and $\Omega = \diag(\omega_1,\ldots, \omega_p)$, we obtain that $\vX$ is a multivariate normal distribution with mean $\vec{0}$ and covariance matrix $\Sigma = \phi_\cg(\Lambda, \Omega)$ where $\phi_\cg$ is the map
\[
\phi_\cg: (\Lambda,\Omega) \longmapsto (\eye_p - \Lambda)^{-T}\Omega(\eye_p - \Lambda)^{-1}. 
\]
The covariance matrix $\Sigma\in \rr^{p\times p}$ is a positive definite matrix that uniquely identifies the (mean $\vec{0}$) multivariate normal distribution $\vX$.
Moreover, the parameters $(\Lambda, \Omega)$ can be uniquely identified from the matrix $\Sigma$ for a given DAG $\cg$. 
Hence, the Gaussian DAG model for $\cg$ may be viewed as a subset of the $p$-dimensional positive definite cone $\textrm{PD}^p$
\[
\cm(\cg) = \{\phi_\cg(\Lambda, \Omega) \in\textrm{PD}^p: (\vec{\lambda},\vec{\omega})\in \rr^{E}\times \rr^p_{>0}\}.
\]
Gaussian DAG models are extremely well-studied in the fields of graphical models \cite{maathuis2018handbook,koller2009probabilistic}, causal inference \cite{peters2017elements} and algebraic statistics \cite{drton2018algebraic, sullivant2018algebraic}.
\end{example}

A fundamental problem is to identify an optimal DAG representation $\cg$ of a data-generating distribution $\vX = [X_1,\ldots, X_p]^T$ based on the information contained in a random sample $\dd = [x_{ij}]\in\rr^{p\times n}$. 
Here, each column $\vx_{:, j}$ of the data matrix $\dd$ is a single outcome drawn from $\vX$, and the columns are assumed to be mutually independent. 
Estimating the desired DAG $\cg$ from $\dd$ is the process known as \emph{causal discovery}. 
To make the problem of causal discovery well-defined we have to account for the possibility that two DAGs may yield the same DAG model. 
We say that two DAG $\cg$ and $\ch$ are \emph{Markov equivalent} if $\cm(\cg) = \cm(\ch)$, and we call the set of all DAGs Markov equivalent to $\cg$ its \emph{Markov equivalence class} (MEC). 
A result of Verma and Pearl characterizes Markov equivalent DAGs as follows:

\begin{theorem}\cite{verma1990equivalence}
\label{thm:verma}
    Two DAGs $\cg$ and $\ch$ are Markov equivalent if and only if they have the same skeleton and v-structures. 
\end{theorem}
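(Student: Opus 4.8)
The plan is to translate Markov equivalence into a statement about $d$-separation and then show that the $d$-separation structure of a DAG is governed entirely by its skeleton and v-structures. First I would invoke the global Markov property: for a DAG $\cg$, the conditional independence statements $\vX_A \perp \vX_B \mid \vX_C$ that hold for \emph{every} distribution in $\cm(\cg)$ are exactly those for which $A$ is $d$-separated from $B$ given $C$ in $\cg$. Combined with the existence of distributions in $\cm(\cg)$ that are faithful (realizing no extra independences), this yields the reduction I want: $\cm(\cg) = \cm(\ch)$ if and only if $\cg$ and $\ch$ have identical $d$-separation relations. The theorem then becomes a purely combinatorial claim about $d$-separation.

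For the forward (``only if'') direction, assume $\cg$ and $\ch$ encode the same $d$-separations. The skeleton is recovered by the observation that two nodes $i,j$ are adjacent precisely when no conditioning set $d$-separates them: if adjacent, every path includes the edge $i-j$, which cannot be blocked; if non-adjacent, the set $\pa_\cg(i)$ (or a suitable ancestral set) separates them. Since this property is phrased purely in terms of $d$-separation, the skeletons of $\cg$ and $\ch$ coincide. Given the common skeleton, I would then characterize v-structures intrinsically: for a non-adjacent pair $i,j$ with common neighbor $k$, the triple $i \to k \leftarrow j$ is a v-structure if and only if no $d$-separating set for $i,j$ contains $k$, whereas if $k$ is a non-collider then $k$ lies in some separating set. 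As this criterion again refers only to $d$-separation, the v-structures must agree.

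The reverse (``if'') direction is where the main work lies. Assuming $\cg$ and $\ch$ share a skeleton and a set of v-structures, I must show every $d$-separation of $\cg$ is a $d$-separation of $\ch$ and conversely; by symmetry it suffices to prove one inclusion, which I would argue by contraposition. Given a path in the common skeleton that is $d$-connecting relative to a set $S$ in $\cg$, I aim to produce an active path relative to $S$ in $\ch$. The status of an internal triple $i-k-j$ as collider or non-collider agrees between $\cg$ and $\ch$ whenever $i$ and $j$ are \emph{non}-adjacent, since such triples are exactly the v-structures (or forced non-v-structures) pinned down by the hypothesis. The obstruction is the triples whose endpoints $i,j$ \emph{are} adjacent in the skeleton: there the two DAGs may orient edges differently, so the collider label along the path can flip.

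The key step, and the hardest, is a rerouting lemma removing these ambiguous triples: whenever an active path in $\cg$ passes through a triple $i-k-j$ with $i,j$ adjacent, one shortcuts it using the edge $i-j$ to obtain a shorter path still active given $S$. Iterating reduces to paths whose internal triples all have non-adjacent endpoints, where the collider structure, and hence activity given $S$, is identical in $\cg$ and $\ch$. I expect the careful bookkeeping in this induction, in particular verifying that the shortcut preserves activity when $k$ or its descendants interact with $S$, to be the principal obstacle. An alternative that sidesteps the rerouting, at the cost of heavier machinery, is to connect $\cg$ and $\ch$ by a sequence of covered edge reversals, each of which manifestly preserves $\cm(\cg)$.
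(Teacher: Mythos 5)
First, a point of comparison: the paper does not prove this statement at all --- Theorem~\ref{thm:verma} is quoted from Verma and Pearl \cite{verma1990equivalence} as a background result, so there is no in-paper proof to measure your argument against. What follows assesses your proposal on its own terms.

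Your reduction of Markov equivalence to equality of d-separation relations (global Markov property in one direction, existence of faithful distributions in the other) is correct, and your forward direction is the standard one: adjacency is characterized by inseparability, and for a non-adjacent pair $i,j$ with common neighbor $k$, the triple $i \to k \leftarrow j$ is a v-structure precisely when $k$ lies in no separating set for $i,j$. The genuine gap is in the reverse direction, at the step where you conclude that once all internal triples of a path have non-adjacent endpoints, ``the collider structure, and hence activity given $S$, is identical in $\cg$ and $\ch$.'' Agreement of collider labels does \emph{not} imply agreement of activity: a collider $k$ on a path is active given $S$ when $k \in S$ \emph{or $k$ has a descendant in $S$}, and descendant relations are not determined by the skeleton and v-structures. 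Already for $1 \to 2$ versus $2 \to 1$ (Markov equivalent, same skeleton, no v-structures) the descendant sets of node $1$ differ. So a path in $\cg$ whose colliders are activated only through strict descendants may be blocked along the very same path in $\ch$, and your argument supplies no replacement path; the rerouting lemma, even granting it, lands you in a place where the cross-graph transfer still fails. To close this you need one of the standard devices: (i) the walk-based d-connection criterion, in which colliders are required to lie in $S$ itself (eliminating the descendant condition, at the price of redoing your rerouting with walks and additional case analysis), (ii) Lauritzen's ancestral moral graph criterion, together with an argument handling the fact that ancestral sets also differ between $\cg$ and $\ch$, or (iii) the covered-edge-reversal route you mention as an alternative --- which is complete and clean, but requires proving Chickering's transformational characterization, a combinatorial result of difficulty comparable to the theorem itself rather than a mere citation.
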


In Theorem~\ref{thm:verma}, the \emph{skeleton} of $\cg$ refers to the undirected graph obtained from $\cg$ by replacing each edge in $\cg$ with an undirected edge. 
A \emph{v-structure} in a DAG is an induced path of length two of the form $i\rightarrow j \leftarrow k$.
We sometimes say that $j$ is the center of the v-structure, or that $i$ and $k$ form a v-structure at $j$.

\subsection{Interventional DAG models}\label{subsec:interventionaldagmodels}

In causal inference one would like to interpret an edge $i\rightarrow j$ in a DAG $\cg$ as representing the belief that $X_i$ is a direct cause of $X_j$. 
Markov equivalence may make this interpretation impossible for some edges in the graph $\cg$, since by Theorem~\ref{thm:verma}, we can equally represent the same distribution with another DAG $\ch$ in the MEC of $\cg$. 
Hence, we would like to refine MECs of DAGs, ideally down to singletons so that each graph is identifiable. 

The gold standard approach to refining a Markov equivalence class is to use a mixture of \emph{observational} and \emph{interventional} (sometimes called \emph{experimental}) data. 
In this set-up, we assume there is a data-generating distribution $\vX$ from which we can draw samples. 
This is the \emph{observational distribution}. 
An \emph{interventional distribution} is produced by perturbing the observational distribution through some experiment, and the samples drawn from this distribution are called \emph{interventional data}. 
To formalize this, consider a sequence of subsets $\ci = (I_0,I_1,\ldots, I_K)$, $I_k\subseteq[p]$ where we let $I_0 = \emptyset$.
A set $I_k\in\ci$ is called an \emph{intervention target}. 

Suppose now that $\vX\in \cm(\cg)$ is our observational distribution with density $f^{0}(\vx)$.
Since $\vX\in \cm(\cg)$, we have that
\[
f^{0}(\vx) = \prod_{i=1}^pf^{0}_{X_i | \vX_{\pa_\cg(i)}}(x_i | \vx_{\pa_\cg(i)}). 
\]
For $I_k\in\ci$ with $k\neq 0$, we define the \emph{interventional density} (with respect to $\cg$)
\[
f^{k}(\vx) = \prod_{i\in I_k} f^{k}_{X_i | \vX_{\pa_\cg(i)}}(x_i | \vx_{\pa_\cg(i)})\prod_{i\notin I_k} f^{0}_{X_i | \vX_{\pa_\cg(i)}}(x_i | \vx_{\pa_\cg(i)}). 
\]
An \emph{interventional setting} is a sequence $(f^{0},\ldots, f^{K})$ of interventional densities with respect to $\cg$ and $\ci$. 
The \emph{interventional DAG model} for $\cg$ and $\ci$ is the collection of interventional settings
\begin{equation*}
    \begin{split}
        \cm(\cg,\ci) := \{(f^{0},\ldots&, f^{K}) : f^{k}\in\cm(\cg) \mbox{ for all $k\in[K]$, }\\
        &f^{k}_{X_i | \vX_{\pa_\cg(i)}}(x_i | \vx_{\pa_\cg(i)}) = f^{0}_{X_i | \vX_{\pa_\cg(i)}}(x_i | \vx_{\pa_\cg(i)}) \mbox{ for all $i\notin I_k$ for all $k\in [K]$}\}.
    \end{split}
\end{equation*}
One should interpret the above as stating that the distribution $f^k$ is produced from the observational distribution by performing some experiment that changes the behavior of the variable $X_k$ in the system. 
When building an interventional model for observed experimental data where one targets the node $j$, we can capture the observed consequences of the experiment by introducing additional parameters when modeling the conditional factor $f^{k}_{X_i | \vX_{\pa_\cg(i)}}(x_i | \vx_{\pa_\cg(i)})$.  

\begin{example}
    \label{ex:interventionalgaussian}
    Let $\vX\in\cm(\cg)$ be a multivariate normal observational distribution specified by the parameters $(\vec{\lambda},\vec{\omega})$ as in Example~\ref{ex:gaussians}. 
    It follows that the observational distribution is given by 
    \[
    X_i = \sum_{j\in\pa_\cg(i)}\lambda_{ij}X_j + \varepsilon_i, \qquad \mbox{for all $i\in[p]$}.
    \]
    Suppose now that we consider $\ci = (I_0 = \emptyset, I_1,\ldots, I_K)$ for some $I_k \subseteq[p]$ and $k\in[K]$. 
    An interventional distribution $\vX^{(k)} = \left[X_1^{(k)},\ldots, X_p^{(k)}\right]^T$ for the target set $I_k$ is 
    \begin{equation*}
    \begin{split}
    X_i^{(k)} &= \sum_{j\in\pa_\cg(i)}\lambda_{ij}^{(k)}X_j^{(k)} + \varepsilon_i^{(k)}, \qquad \mbox{for all $i\in [p]$,}
    \end{split}
    \end{equation*}
    where $\lambda_{ij}^{(k)} = \lambda_{ij}$ and $\omega_i^{(k)} = \omega_i$ whenever $i\notin I_k$.
    Letting $\Sigma^{(0)} = \phi_\cg(\vec{\lambda}, \vec{\omega})$ and $\Sigma^{(k)} = \phi_\cg(\vec{\lambda}^{(k)},\vec{\omega}^{(k)})$ for $k\in[K]$, we obtain the interventional setting $\left(\Sigma^{(0)},\ldots, \Sigma^{(K)}\right)$ in the Gaussian interventional model $\cm(\cg,\ci)$. 
\end{example}

We say that two DAGs $\cg$ and $\ch$ are \emph{$\ci$-Markov equivalent} whenever $\cm(\cg,\ci) = \cm(\ch,\ci)$, and that they belong to the same \emph{$\ci$-Markov equivalence class} ($\ci$-MEC). 
To graphically represent the interventions, we introduce nodes $z_k$ for $k\in[K]$ and define the $\ci$-DAG associated to $\cg$ and $\ci$. \begin{definition}
    \label{def:Idag}
    Let $\cg = ([p], E)$ be a DAG and $\ci = (I_0 = \emptyset,I_1,\ldots, I_K)$ a sequence of intervention targets with $I_k\subseteq[p]$. 
    The $\ci$-DAG associated to $\cg$ and $\ci$ is $\cg^\ci = ([p]\cup \mathcal{Z}_\ci,E^\ci)$ where 
    \[
    \mathcal{Z}_\ci = \{z_k : k\in[K]\} \qquad \mbox{and} \qquad E^\ci := E \cup \bigcup_{k=1}^K\{z_k \rightarrow i : i\in I_k\}.
    \]
\end{definition}
The following theorem generalizes Theorem~\ref{thm:verma} and shows how $\ci$-MECs refine MECs of DAGs:

\begin{theorem}\cite{yang2018characterizing}
    \label{thm:Iverma}
    Two DAGs $\cg$ and $\ch$ are $\ci$-Markov equivalent if and only if $\cg^\ci$ and $\ch^\ci$ have the same skeleton and v-structures.
\end{theorem}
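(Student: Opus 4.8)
The plan is to reduce $\ci$-Markov equivalence of $\cg$ and $\ch$ to ordinary Markov equivalence of the augmented DAGs $\cg^\ci$ and $\ch^\ci$, and then to invoke the Verma--Pearl characterization (Theorem~\ref{thm:verma}). The bridge between the two notions is a context-variable encoding: from an interventional setting $(f^{0},\ldots,f^{K})\in\cm(\cg,\ci)$ I would build a single joint density on the augmented variable set $[p]\cup\mathcal{Z}_\ci$ by treating each intervention node $z_k$ as a binary context indicator equipped with a fixed strictly positive marginal, and by declaring the configuration in which $z_k=1$ (and the other indicators are $0$) to select the interventional factor $f^{k}_{X_i\mid \vX_{\pa_\cg(i)}}$ for $i\in I_k$, while every other node retains its observational factor $f^{0}_{X_i\mid\vX_{\pa_\cg(i)}}$.

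First I would check that this encoding lands in the ordinary DAG model $\cm(\cg^\ci)$: the defining constraint of $\cm(\cg,\ci)$, namely that the conditional of $X_i$ is unchanged whenever $i\notin I_k$, is exactly the statement that the factor of $X_i$ does not depend on $z_k$ unless $i\in I_k$, which is the Markov factorization of $\cg^\ci$ with the edges $z_k\to i$ present precisely for $i\in I_k$. Conversely, restricting a $\cg^\ci$-Markov distribution to the $K+1$ distinguished context configurations recovers an interventional setting in $\cm(\cg,\ci)$. I would then argue that this correspondence is compatible with equality of models, so that $\cm(\cg,\ci)=\cm(\ch,\ci)$ holds if and only if the augmented models $\cm(\cg^\ci)$ and $\cm(\ch^\ci)$ coincide; applying Theorem~\ref{thm:verma} to the augmented DAGs then yields the claimed characterization by common skeleton and v-structures.

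The two implications split as follows. For the ``if'' direction, if $\cg^\ci$ and $\ch^\ci$ share skeleton and v-structures then they are Markov equivalent, so they impose the same conditional independence relations among the augmented variables; pulling these back through the encoding shows that the two interventional models contain exactly the same settings. For the ``only if'' direction I would argue contrapositively and constructively: if the augmented graphs differ in an adjacency or in a v-structure, then by Theorem~\ref{thm:verma} there is a conditional independence statement separating them, and I would exhibit a concrete interventional setting (for instance, a Gaussian one as in Example~\ref{ex:interventionalgaussian}) lying in one model but not the other, witnessing $\cm(\cg,\ci)\neq\cm(\ch,\ci)$.

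The delicate point --- and the step I expect to be the main obstacle --- is making the context encoding faithful at the level of the intervention edges. One must ensure that an intervention on a target $i\in I_k$ genuinely perturbs the conditional $f_{X_i\mid\vX_{\pa_\cg(i)}}$, so that the edge $z_k\to i$ is actually detectable in the model and cannot be erased without changing it; symmetrically, one must rule out that the restricted family of admissible context configurations (only $K+1$ of the $2^K$ possible) permits two non-equivalent augmented graphs to induce identical interventional models. Handling this amounts to proving a completeness statement --- that the conditional independences \emph{realized} across $\cm(\cg,\ci)$ are exactly the d-separation relations of $\cg^\ci$ --- which is where the genericity of the parametrization, rather than any single distribution, does the essential work.
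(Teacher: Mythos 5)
First, a point of comparison: the paper does not prove this statement at all --- Theorem~\ref{thm:Iverma} is imported directly from \cite{yang2018characterizing}, so there is no internal proof to measure your argument against. Judged on its own terms, your reduction-to-Verma--Pearl strategy is the natural one (and is in the spirit of how such results are established in the literature), but as written it has a genuine gap, in two places. The smaller issue is that your context-variable encoding is not well-defined as an element of $\cm(\cg^\ci)$: in $\cg^\ci$ the nodes $z_k$ are source nodes with no edges among themselves, so every distribution Markov to $\cg^\ci$ makes them mutually \emph{independent}; declaring them mutually exclusive indicators (exactly one equal to $1$) contradicts this. If instead you take the $z_k$ to be independent binary variables, then membership in $\cm(\cg^\ci)$ forces you to specify the conditional factors on all $2^K$ context configurations, not just the $K+1$ configurations that correspond to the densities $f^0,\ldots,f^K$; this is repairable (fix a tie-breaking rule for configurations with several active interventions and check the factorization), but until it is done the bridge between $\cm(\cg,\ci)$ and $\cm(\cg^\ci)$ on which the whole proof rests has not been established.

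The more serious issue is the ``only if'' direction. You assert that $\cm(\cg,\ci)=\cm(\ch,\ci)$ is ``compatible with'' equality of the augmented models, and you defer the justification to a ``completeness statement'' flagged as the main obstacle. But that obstacle \emph{is} the theorem: the possibility you name --- that two augmented DAGs differing in skeleton or v-structures could still induce identical interventional models because only $K+1$ of the $2^K$ context configurations are ever realized --- is precisely what must be excluded, and no argument is offered. Doing so requires constructing, for each discrepancy in adjacency or v-structure between $\cg^\ci$ and $\ch^\ci$ (including the interventional v-structures $z_k\to i\from j$ with $i\in I_k$, $j\notin I_k$), an explicit tuple $(f^0,\ldots,f^K)$ satisfying the invariance constraints of $\cm(\cg,\ci)$ that fails to lie in $\cm(\ch,\ci)$; this constructive witness argument is the bulk of the proof in \cite{yang2018characterizing}. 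It is also where the hypothesis $I_0=\emptyset$ (conservativity of the target family) does real work --- the characterization is false without it --- yet your sketch never invokes it, which is a concrete sign that the deferred step has not been engaged.
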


\subsection{Imsets}\label{subsec:imsets}
Imsets are integer-valued functions introduced by Studen\'y \cite{studeny2006probabilistic} to offer discrete geometric and integer linear programming perspectives on DAG models.
Let $2^{[p]}$ denote the power set of $[p]$.
For a subset $A\subseteq[p]$, we let $\delta_A: 2^{[p]} \longrightarrow \{0,1\}$ denote the \emph{identifier} function of $A$; e.g., 
\[
\delta_{A}(B) = 
\begin{cases}
    1   &   \mbox{ if $B = A$},\\
    0   &   \mbox{otherwise}.
\end{cases}
\]

Given a DAG $\cg = ([p], E)$ we define the \emph{standard imset} of $\cg$ to be
\[
u_\cg := \delta_{[p]} - \delta_\emptyset + \sum_{i = 1}^p\left(\delta_{\pa_\cg(i)} - \delta_{\fa_\cg(i)}\right), 
\]
where $\fa_\cg(i) = \{i\}\cup\pa_\cg(i)$ is called the \emph{family} of $i$. 


Since $u_\cg$ is an integer-valued function we can treat it as an integer-valued vector $u_\cg\in\rr^{2^{[p]}}$, where we let $u_\cg(S)$ denote the coordinate of $u_\cg$ indexed by $S\subseteq[p]$.

\begin{definition}\cite{hemmecke2012characteristic}
    \label{def:characteristicimset}
    The \emph{characteristic imset} of a DAG $\cg$ is the vector $c_\cg\in\rr^{2^{[p]}}$ where
    \[
    c_\cg(S) = 1 - \sum_{S \subseteq T}u_\cg(T), \qquad \mbox{for all $S\subseteq[p]$.}
    \]
    We let $\textrm{IM}(p) = \{c_\cg : \cg \mbox{ is a DAG on node set $[p]$} \}$.
\end{definition}

We note that $c_\cg(S) = 1$ for all $S\subseteq[p]$ with $|S| \leq 1$. 
Hence, we may realize $c_\cg$ as a vector in $\rr^{2^{p} - p - 1}$. 
We note also that the transformation from standard imsets $u_\cg$ to characteristic imsets $c_\cg$ is an isomorphism with inverse given by M\"obius inversion. 
Characteristic imsets are $0/1$-vectors with the following combinatorial interpretation.

\begin{theorem}\cite[Theorem~1]{hemmecke2012characteristic}
    \label{thm:imsetinterp}
    Let $\cg = ([p], E)$ be a DAG. The characteristic imset $c_\cg$ satisfies
    \[
    c_\cg(S) = 
    \begin{cases}
        1 & \mbox{if there exists $i\in S$ such that $j\in\pa_\cg(i)$ for all $j\in S\setminus i$,}\\
        0 & \mbox{otherwise.}
    \end{cases}
    \]
\end{theorem}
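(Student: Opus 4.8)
The plan is to prove Theorem~\ref{thm:imsetinterp} by directly computing the M\"obius-type sum $\sum_{S \subseteq T} u_\cg(T)$ appearing in Definition~\ref{def:characteristicimset} and showing it equals $0$ or $1$ according to the stated combinatorial condition. First I would substitute the definition of the standard imset $u_\cg = \delta_{[p]} - \delta_\emptyset + \sum_{i=1}^p(\delta_{\pa_\cg(i)} - \delta_{\fa_\cg(i)})$ into the sum. Since $\delta_A(T) = 1$ exactly when $T = A$, the sum $\sum_{S\subseteq T} u_\cg(T)$ collapses to counting, with signs, which of the relevant sets $A$ (namely $[p]$, $\emptyset$, and each $\pa_\cg(i)$ and $\fa_\cg(i)$) contain $S$. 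Concretely, for a fixed $S$ with $|S|\geq 2$ (the interesting range, since $c_\cg(S)=1$ automatically when $|S|\leq 1$), one gets
\[
\sum_{S\subseteq T} u_\cg(T) = \ind[S\subseteq [p]] - \ind[S\subseteq\emptyset] + \sum_{i=1}^p\bigl(\ind[S\subseteq\pa_\cg(i)] - \ind[S\subseteq\fa_\cg(i)]\bigr),
\]
so that $c_\cg(S) = 1 - 1 + 0 - \sum_{i=1}^p\bigl(\ind[S\subseteq\pa_\cg(i)] - \ind[S\subseteq\fa_\cg(i)]\bigr)$, reducing the problem to evaluating $\sum_{i=1}^p\bigl(\ind[S\subseteq\fa_\cg(i)] - \ind[S\subseteq\pa_\cg(i)]\bigr)$.

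Next I would analyze this last telescoping-style sum combinatorially. The key observation is that $S\subseteq\fa_\cg(i) = \{i\}\cup\pa_\cg(i)$ but $S\not\subseteq\pa_\cg(i)$ can happen only when $i\in S$ and $S\setminus\{i\}\subseteq\pa_\cg(i)$; in every other case the two indicators for a given $i$ cancel. Therefore
\[
\sum_{i=1}^p\bigl(\ind[S\subseteq\fa_\cg(i)] - \ind[S\subseteq\pa_\cg(i)]\bigr) = \#\{i\in S : S\setminus\{i\}\subseteq\pa_\cg(i)\}.
\]
I would then argue that this count is at most one. Indeed, if two distinct elements $i, i'\in S$ both satisfied $S\setminus\{i\}\subseteq\pa_\cg(i)$ and $S\setminus\{i'\}\subseteq\pa_\cg(i')$, then $i'\in\pa_\cg(i)$ and $i\in\pa_\cg(i')$, giving a directed $2$-cycle $i\to i'\to i$, contradicting acyclicity of $\cg$. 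Hence the count is $0$ or $1$, and it equals $1$ precisely when there exists $i\in S$ with $j\in\pa_\cg(i)$ for all $j\in S\setminus\{i\}$, which is exactly the asserted condition; substituting back gives $c_\cg(S)=1$ in that case and $c_\cg(S)=0$ otherwise.

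The main obstacle, such as it is, lies in the bookkeeping of the indicator cancellations and in cleanly invoking acyclicity to rule out two simultaneous ``sink-like'' elements of $S$; the uniqueness argument via the forbidden $2$-cycle is the conceptual heart. I would also need to handle the boundary cases carefully: for $S=\emptyset$ and $|S|=1$ one checks directly that the formula returns $1$, consistent with the remark following Definition~\ref{def:characteristicimset} that $c_\cg(S)=1$ whenever $|S|\leq 1$. A subtle point worth stating explicitly is that when $|S| = 1$, say $S = \{i\}$, the condition ``$j\in\pa_\cg(i)$ for all $j\in S\setminus\{i\} = \emptyset$'' is vacuously true, so the combinatorial criterion correctly yields $1$ and the two descriptions agree across all of $2^{[p]}$.
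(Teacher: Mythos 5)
The paper does not prove this statement at all: Theorem~\ref{thm:imsetinterp} is imported by citation from \cite{hemmecke2012characteristic}, so there is no in-paper proof to compare against. Judged on its own, your proof is correct and self-contained, and it is essentially the standard argument behind the cited result: expanding $\sum_{S\subseteq T}u_\cg(T)$ turns each $\delta_A$ into the indicator $\ind[S\subseteq A]$; the terms $\ind[S\subseteq\fa_\cg(i)]-\ind[S\subseteq\pa_\cg(i)]$ cancel except when $i\in S$ and $S\setminus\{i\}\subseteq\pa_\cg(i)$ (here you implicitly use $i\notin\pa_\cg(i)$, which holds since $\cg$ has no self-loops); and the resulting count $\#\{i\in S: S\setminus\{i\}\subseteq\pa_\cg(i)\}$ is at most one because two such elements would force a directed $2$-cycle, contradicting acyclicity. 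That last step is indeed the conceptual heart, and you execute it correctly.

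One small imprecision in your boundary discussion: at $S=\emptyset$ the combinatorial criterion in the theorem returns $0$ (there is no $i\in S$), while the direct computation gives $c_\cg(\emptyset)=1$, so the two descriptions do \emph{not} agree there. This is immaterial, since the paper's convention (see the remark after Definition~\ref{def:characteristicimset}) realizes $c_\cg$ as a vector in $\rr^{2^p-p-1}$, i.e., only coordinates with $|S|\geq 2$ are ever used, and your argument is valid uniformly for all nonempty $S$. It would be cleaner to simply state that the theorem is read on sets with $|S|\geq 2$ (or $|S|\geq 1$) and leave $\emptyset$ out of the claimed equivalence.
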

In \cite{hemmecke2012characteristic} it is shown that $c_\cg = c_\ch$ if and only if $\cg$ and $\ch$ are Markov equivalent.

\section{Interventional CIM polytopes}\label{sec:interventionalCIMpolytopes}

In this section we introduce interventional characteristic imset polytopes, which serve as feasible regions for integer linear programs whose solutions correspond to optimal $\ci$-DAGs. 
In subsection~\ref{subsec:convexpolytopes} we first recall the necessary preliminaries for convex polytopes. 
Then in subsection~\ref{subsec:imsetpolytopes} we recall the characteristic imset polytopes introduced in \cite{hemmecke2012characteristic}.
The interventional generalization is then presented in subsection~\ref{subsec:interventionalCIMpolytopes}.

\subsection{Convex polytopes.}\label{subsec:convexpolytopes}
A \emph{V-polytope} $P$ is the convex hull of a finite set of points $\{v_1,\ldots, v_m\}\in \rr^n$:
\[
P = \conv(v_1,\ldots, v_m) = \left\{ x\in \rr^n : x = \sum_{i = 1}^m\lambda_i v_i, \, \sum_{i = 1}^m \lambda_i = 1, \, \lambda_i \geq 0 \mbox{ for all } i\in[m]\right\}.
\]
The set of points $\{v_1,\ldots, v_m\}$ is called a \emph{V-representation} of $P$. 
An \emph{H-polyhedron} $P$ is the solution set to a system of linear inequalities $\langle a_1,x\rangle  \leq b_1,\ldots, \langle a_t,x\rangle\leq b_t$ where $a_1,\ldots, a_t\in\rr^n$ and $b_1,\ldots, b_t\in \rr$.
An H-polyhedron that does not contain any ray (a set of the form $\{x + \lambda y : \lambda \geq 0\}$ for some $x, y\in \rr^n$ with $y\neq 0$) is called an \emph{H-polytope}.
\begin{theorem}\cite[Theorem~1.1]{ziegler2012lectures}
    \label{thm:VH}
    A subset $P\subseteq\rr^n$ is a V-polytope if and only if it is an H-polytope.
\end{theorem}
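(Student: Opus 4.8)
The plan is to prove the Weyl--Minkowski theorem (Theorem~\ref{thm:VH}), establishing the equivalence of V-polytopes and H-polytopes. The cleanest route is via a duality/projection argument, since the two implications are essentially dual to one another and both can be reduced to a single operation: Fourier--Motzkin elimination. First I would prove the direction ``V-polytope $\Rightarrow$ H-polytope.'' Given $P = \conv(v_1,\ldots,v_m)\subseteq\rr^n$, I realize $P$ as a projection of a higher-dimensional H-polyhedron. Explicitly, $P$ is the image under the coordinate projection $(\lambda,x)\mapsto x$ of the set
\[
Q = \left\{(\lambda,x)\in\rr^m\times\rr^n : x = \sum_{i=1}^m\lambda_i v_i,\ \sum_{i=1}^m\lambda_i = 1,\ \lambda_i\geq 0\right\},
\]
which is cut out by finitely many linear inequalities and equations, hence is an H-polyhedron. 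The key tool is then Fourier--Motzkin elimination: eliminating one variable from a finite system of linear inequalities yields another finite system of linear inequalities describing the projection onto the remaining coordinates. Applying this to eliminate the $\lambda$-variables one at a time produces a finite inequality description of $P$ in $\rr^n$, showing $P$ is an H-polyhedron; since $P$ is visibly bounded (it is a convex hull of finitely many points, hence lies in a large ball), it contains no ray and is therefore an H-polytope.

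For the converse, ``H-polytope $\Rightarrow$ V-polytope,'' I would exploit polar duality to reduce to the direction already proved. After translating so that an interior point of the bounded H-polytope $P$ sits at the origin, form the polar dual $P^\circ = \{y : \langle y,x\rangle \le 1 \text{ for all } x\in P\}$. Because $P$ is an H-polytope with $0$ in its interior, $P^\circ$ is the convex hull of the (finitely many) normal vectors $a_i/b_i$ appearing in the inequalities $\langle a_i,x\rangle\le b_i$, together with $0$ being interior to $P^\circ$; thus $P^\circ$ is a V-polytope. Applying the already-established forward direction, $P^\circ$ is then an H-polytope. Invoking the biduality relation $(P^\circ)^\circ = P$, valid when $0$ is interior, and the standard fact that the polar of an H-polytope with $0$ in its interior is a V-polytope, we conclude that $P = (P^\circ)^\circ$ is a V-polytope.

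The main obstacle, and the step deserving the most care, is the Fourier--Motzkin elimination lemma: one must verify that projecting an H-polyhedron along a coordinate direction again yields an H-polyhedron with a \emph{finite} inequality description. Concretely, given inequalities in a variable $\lambda_j$, one partitions them into those with positive, negative, and zero coefficient on $\lambda_j$, and replaces each positive/negative pair by its nonnegative combination eliminating $\lambda_j$; the content is showing the resulting (finitely many) inequalities exactly describe the projection. The duality bookkeeping in the converse is routine once one fixes an interior point at the origin, but care is needed to ensure boundedness and full-dimensionality hypotheses are met so that $P^\circ$ is itself bounded with $0$ in its interior; a reduction to the full-dimensional case (by restricting to the affine hull of $P$) handles the degenerate situations. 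Since Theorem~\ref{thm:VH} is quoted from \cite{ziegler2012lectures}, I would in practice simply cite it, but the sketch above is the standard self-contained argument.
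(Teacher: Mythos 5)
The paper offers no proof of this statement: it is the classical Weyl--Minkowski theorem, imported by citation from \cite{ziegler2012lectures}. Your sketch---Fourier--Motzkin elimination to show every V-polytope is an H-polytope, then polarity and biduality to deduce the converse---is correct and is essentially the argument of the cited reference itself, so it matches the paper's (implicit) approach.
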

Hence we refer to both V-polytopes and H-polytopes as polytopes. In combinatorial optimization problems, it is typical to define a convex polytope $P$ as the convex hull of a finite set of points that correspond to combinatorial objects, such as a collection of DAGs or MECs. Theorem~\ref{thm:VH} says that such a polytope $P$ admits an H-representation, whose inequalities are typically used to solve the linear program with feasible region $P$. Recovering an H-representation of a V-polytope is, in general, a nontrivial task. 

An inequality $a\cdot x \leq b$ is \emph{valid} for a polytope $P$ of dimension $d$ if it is satisfied for all $x\in P$. 
Each valid inequality $a\cdot x \leq b$ on $P$ defines a \emph{face} of $P$, which is a set $F = \{x\in \rr^n : a\cdot x = b\}\cap P$. The face of $P$ defined by $a\cdot x \leq b$ is itself a polytope with H-representation given by the inequalities defining $P$ together with $a\cdot x \leq b$ and $-a\cdot x \leq -b$.

The \emph{dimension} of a polytope $P$ is the dimension of its \emph{affine span}
\[
\left\{x\in \rr^n : x = \sum_{i = 1}^m\lambda_i p_i \mbox{ for some $p_1,\ldots, p_m\in P$, and $\lambda_1,\ldots, \lambda_m\in\rr$ with $\sum_{i=1}^m \lambda_i = 1$}\right\}.
\]
Similarly the dimension of a face $F$ is the dimension of its affine hull.
The $0$-dimensional faces of a d-dimensional polytope $P$ are the \emph{vertices} of $P$, its $1$-dimensional faces are called its \emph{edges} and its $(d-1)$-dimensional faces are its \emph{facets}. 
The minimal set of inequalities needed to constitute an H-representation of a polytope $P$ are the equations defining the affine span of $P$ together with the facet-defining inequalities of $P$. 
This representation of $P$ is called its \emph{minimal H-representation}.
For more details on the basics of convex polytopes, we refer the reader to \cite{ziegler2012lectures}. 


\subsection{Characteristic imset polytopes}\label{subsec:imsetpolytopes}
Imsets allow us to phrase the problem of causal discovery introduced in Subsection~\ref{subsec:dagmodels} as a linear program with feasible region a convex polytope in which each vertex corresponds to a unique MEC of DAGs. 
\begin{definition}
The \emph{characteristic imset polytope} (or \emph{CIM polytope}) of a collection of characteristic imsets $\mathcal{S}$ is
\[
\CIM_\mathcal{S} := \conv(c_\cg\in \mathcal{S})\subset \rr^{2^{[p]}}.
\]
\end{definition}
When $\mathcal{S}$ is all characteristic imsets for DAGs on vertex set $[p]$, we let $\CIM_p := \CIM_\mathcal{S}$, and when $\mathcal{S} = \{c_\cg : \cg \mbox{ a DAG with skeleton $G$}\}$ we let $\CIM_G := \CIM_\mathcal{S}$. 
The polytope $\CIM_p$ provides a general discrete geometry model for the problem of causal discovery.
The subpolytopes $\CIM_\mathcal{S}$, such as $\CIM_G$, provide a discrete geometry model for hybrid algorithms that restrict their optimization phase to searching over the MECs in $\mathcal{S}$, where $\mathcal{S}$ is obtained via the constraint-based phase of the algorithm. 
The polytopes $\CIM_G$ were observed to have the following geometric relationship with $\CIM_p$.

\begin{proposition}\cite[Proposition 2.4]{LRS2022a}
    \label{prop:skeletalfaces}
    Let $G = ([p],E)$ and $H = ([p], E^\prime)$ be undirected graphs satisfying $E\subseteq E^\prime$, and let 
    \[
    \mathcal{S} = \{c_\cg : \cg \mbox{ has skeleton $S = ([p], D)$ satisfying $E\subseteq D\subseteq E^{\prime}$}\}.
    \]
    Then $\CIM_\mathcal{S}$ is a face of $\CIM_p$. 
\end{proposition}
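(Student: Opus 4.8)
The plan is to identify $\CIM_\mathcal{S}$ as a face by exhibiting a single linear inequality that is valid on all of $\CIM_p$ and whose equality set is exactly $\CIM_\mathcal{S}$. Once such an inequality is in hand, the result is immediate from the description of faces of V-polytopes.

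The crucial observation is that the coordinates of a characteristic imset indexed by two-element sets record its skeleton. Indeed, applying Theorem~\ref{thm:imsetinterp} with $S = \{i,j\}$ shows that $c_\cg(\{i,j\}) = 1$ precisely when $i\rightarrow j$ or $j\rightarrow i$ is an edge of $\cg$, i.e. when $\{i,j\} \in \skel(\cg)$, and $c_\cg(\{i,j\}) = 0$ otherwise. Hence the condition defining $\mathcal{S}$---that the skeleton $D = \skel(\cg)$ satisfy $E \subseteq D \subseteq E'$---translates into the coordinate conditions $c_\cg(\{i,j\}) = 1$ for every $\{i,j\}\in E$ together with $c_\cg(\{i,j\}) = 0$ for every $\{i,j\} \in \binom{[p]}{2}\setminus E'$.

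With this translation, I would introduce the linear functional
\[
a\cdot x := \sum_{\{i,j\}\in E} x(\{i,j\}) \;-\; \sum_{\{i,j\}\in\binom{[p]}{2}\setminus E'} x(\{i,j\}),
\]
and claim that $a\cdot x \le |E|$ is valid on $\CIM_p$. Since $\CIM_p = \conv\{c_\cg\}$ and $a\cdot x$ is linear, it suffices to check the inequality at each $c_\cg$; there every coordinate lies in $\{0,1\}$, so the first sum is at most $|E|$ while the subtracted sum is at least $0$, giving $a\cdot c_\cg \le |E|$. Equality holds for $c_\cg$ exactly when all $|E|$ terms of the first sum equal $1$ and every term of the second sum equals $0$, which by the previous paragraph is exactly the defining condition of $\mathcal{S}$. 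Moreover the value $|E|$ is attained (for instance by any acyclic orientation of $G$, whose skeleton is $E$), so the inequality is supporting and defines a genuine face.

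To finish, I would invoke the standard fact that for a V-polytope $P = \conv(V)$ and a valid inequality $a\cdot x \le b$ attaining its maximum $b$ on $P$, the face $\{x \in P : a\cdot x = b\}$ equals $\conv\{v \in V : a\cdot v = b\}$. Applying this with $P = \CIM_p$, $V = \{c_\cg\}$, and $b = |E|$ yields a face equal to $\conv\{c_\cg : E \subseteq \skel(\cg) \subseteq E'\} = \CIM_\mathcal{S}$. I expect no serious obstacle; the only points demanding care are the reduction of the two skeleton containments to coordinate conditions via Theorem~\ref{thm:imsetinterp}, and the observation that one supporting hyperplane can simultaneously enforce the lower containment $E\subseteq D$ (rewarding required edges) and the upper containment $D\subseteq E'$ (penalizing forbidden edges).
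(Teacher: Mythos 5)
Your proof is correct and follows essentially the same approach used in the paper: the paper (which cites this statement from prior work) proves its analogous interventional results, Propositions~\ref{prop:interventionTargetFace} and~\ref{prop:cimGI}, by exposing the subpolytope with exactly such a functional that promotes the required edge coordinates and penalizes the forbidden ones. Your use of Theorem~\ref{thm:imsetinterp} to translate the skeleton containments $E \subseteq D \subseteq E'$ into conditions on the size-two coordinates, followed by the standard fact about faces of V-polytopes, is precisely the intended argument.
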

More recently, it has also be observed that the collection of imsets whose associated DAG models entail the CI relation $\vX_A \independent \vX_B | \vX_C$ form a face of $\CIM_p$ \cite{hu2022towards}.

Other well-studied examples of CIM polytopes include $\CIM_\mathcal{S}$ where $\mathcal{S}$ is the set of all perfect DAGs on nodes $[p]$ \cite{lindner2012discrete, studeny2017towards, studeny2021dual}. 
Since perfect DAGs represent the same models as chordal graphs, this CIM polytope is commonly referred to as the \emph{chordal graph polytope}.

CIM polytopes are defined as V-polytopes, but to use them for linear optimization purposes we require a description of its higher dimensional faces; ideally, an H-representation. 
Aside from one example associated to medical diagnostic models where the associated CIM polytope is a product of simplices \cite{xi2015characteristic}, there are no families of CIM polytopes with known H-representations, despite consistent efforts \cite{lindner2012discrete, studeny2017towards, studeny2021dual}.  
A main contribution of this paper is an H-representation for $\CIM_T$ when $T$ is a tree. 

\subsection{Interventional CIM polytopes}\label{subsec:interventionalCIMpolytopes}
To derive an H-representation for $\CIM_T$, we compute the H-representations for a more general family of polytopes associated to causal discovery using a combination of observational and interventional data. 

\begin{definition}
Let $p$ and $K$ be positive integers. The \emph{interventional standard imset polytope} $\SIM_{p,K}$ is defined by
\[\SIM_{p,K} := \conv\left\{u_{\cg^\ci}: \cg^\ci~\mbox{is an}~\ci\mbox{-DAG on}~[p]~\mbox{with intervention targets}~\ci = (\emptyset, I_1,\dots,I_K)\right\}.\]
The \emph{interventional characteristic imset polytope} $\cim_{p,K}$ is defined by
\[\cim_{p,K} := \conv\left\{c_{\cg^\ci}: \cg^\ci~\mbox{is an}~\ci\mbox{-DAG on}~[p]~\mbox{with intervention targets}~\ci = (\emptyset, I_1,\dots,I_K)\right\}.\]
\end{definition}

Up to relabeling of nodes, an $\ci$-DAG on $[p]$ with $K$ intervention targets is a DAG on $[p+K]$. We have the following observation.

\begin{observation}
\label{obs:cimpKasSubpolytope}
$\SIM_{p,K}$ is a subpolytope of $\SIM_{p+K}$ and $\cim_{p,K}$ is a subpolytope of $\cim_{p+K}$.
\end{observation}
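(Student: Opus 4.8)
The plan is to derive the statement directly from the remark made just before it, namely that an $\ci$-DAG on $[p]$ with $K$ intervention targets is, up to relabeling of nodes, an ordinary DAG on $[p+K]$. Concretely, given $\cg^\ci = ([p]\cup\mathcal{Z}_\ci, E^\ci)$ with $\mathcal{Z}_\ci = \{z_1,\dots,z_K\}$ as in Definition~\ref{def:Idag}, I would fix the relabeling $\pi$ that sends $z_k \mapsto p+k$ and fixes $[p]$ pointwise. This identifies the node set $[p]\cup\mathcal{Z}_\ci$ with $[p+K]$, and hence identifies the ambient coordinate space $\rr^{2^{[p]\cup\mathcal{Z}_\ci}}$ with $\rr^{2^{[p+K]}}$. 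First I would check that $\pi(\cg^\ci)$ is a genuine DAG on $[p+K]$, which is immediate since $\cg^\ci$ is acyclic and relabeling preserves acyclicity, and that both imsets are carried to those of $\pi(\cg^\ci)$; this last point holds because the defining formulas for $u_\cg$ and $c_\cg$ depend only on the parent and family sets, which $\pi$ permutes consistently.

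Consequently the generating set $\{u_{\cg^\ci}\}$ of $\SIM_{p,K}$ is a subset of the generating set $\{u_\cg : \cg~\mbox{a DAG on}~[p+K]\}$ of $\SIM_{p+K}$, and likewise $\{c_{\cg^\ci}\}$ is a subset of the characteristic imsets generating $\cim_{p+K}$. Taking convex hulls then gives the containments $\SIM_{p,K}\subseteq \SIM_{p+K}$ and $\cim_{p,K}\subseteq \cim_{p+K}$.

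To upgrade containment to the claim of being a \emph{subpolytope}, i.e.\ that every vertex of the smaller polytope is also a vertex of the larger one, I would invoke the $0/1$ structure recorded in Theorem~\ref{thm:imsetinterp}: each characteristic imset is a $0/1$ vector, hence an extreme point of the cube $[0,1]^{2^{[p+K]}}$, and an extreme point of the cube that lies in any subset of the cube remains extreme in that subset. Thus each $c_{\cg^\ci}$ is simultaneously a vertex of $\cim_{p,K}$ and of $\cim_{p+K}$, so the vertex set of $\cim_{p,K}$ is a subset of that of $\cim_{p+K}$. For the standard imset polytopes I would then transport this conclusion across the affine isomorphism $u\mapsto c$ given by M\"obius inversion (noted after Definition~\ref{def:characteristicimset}): it maps $\SIM_{p,K}$ onto $\cim_{p,K}$ and $\SIM_{p+K}$ onto $\cim_{p+K}$ while carrying vertices to vertices, so the vertex-subset relation for the CIM polytopes immediately yields the same for the SIM polytopes.

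I do not expect a genuine obstacle here; the statement is essentially a bookkeeping observation. The only point requiring care is tracking the identification of coordinate spaces $\rr^{2^{[p]\cup\mathcal{Z}_\ci}}\cong\rr^{2^{[p+K]}}$ induced by $\pi$, so that the two families of imsets literally lie in the same space and the containments are meaningful rather than merely formal.
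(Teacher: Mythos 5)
Your proof is correct and takes essentially the same route as the paper, which offers no explicit argument at all: it treats the observation as immediate from the preceding remark that, up to relabeling, an $\ci$-DAG on $[p]$ with $K$ intervention targets is a DAG on $[p+K]$, exactly the relabeling $z_k \mapsto p+k$ you make precise. Your extra steps—checking that the generating imsets are genuine vertices via the $0/1$ cube for $\cim_{p,K}\subseteq\cim_{p+K}$ and transporting this along the M\"obius-inversion isomorphism to get $\SIM_{p,K}\subseteq\SIM_{p+K}$—are a sound (and not circular) way to justify the word ``subpolytope,'' which the paper leaves implicit.
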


From Observation~\ref{obs:cimpKasSubpolytope} and Definition~\ref{def:characteristicimset} the following proposition is immediate.

\begin{proposition}
There is an affine isomorphism $\SIM_{p,K} \to \cim_{p,K}$.
\end{proposition}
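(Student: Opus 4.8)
The plan is to exhibit an explicit affine isomorphism between the two polytopes by lifting the M\"obius-type transformation relating standard and characteristic imsets to the level of polytopes. Recall from Definition~\ref{def:characteristicimset} that for any DAG the characteristic imset is obtained from the standard imset via
\[
c_\cg(S) = 1 - \sum_{S\subseteq T} u_\cg(T),
\]
and the remark following that definition notes that this transformation is an isomorphism with inverse given by M\"obius inversion. The key observation is that this formula is \emph{affine} in the entries of $u_\cg$: it is a fixed invertible linear substitution (summation over supersets) followed by the constant translation by the all-ones vector. Since an $\ci$-DAG on $[p]$ with $K$ targets is, up to relabeling, an ordinary DAG on $[p+K]$ (Observation~\ref{obs:cimpKasSubpolytope}), the same transformation applies verbatim to the generators $u_{\cg^\ci}$ and $c_{\cg^\ci}$.

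First I would define the map $\Phi\colon \rr^{2^{[p+K]}} \to \rr^{2^{[p+K]}}$ by
\[
\Phi(v)(S) = 1 - \sum_{S\subseteq T} v(T),
\]
and verify it is an affine bijection: its linear part is the upper-triangular (with respect to reverse inclusion) zeta-type matrix with $\pm$ entries, which is invertible over $\zz$ with M\"obius inverse, and the constant term is the translation by $\delta$-to-one. I would then note that by construction $\Phi(u_{\cg^\ci}) = c_{\cg^\ci}$ for every $\ci$-DAG in the defining generating set. Next I would invoke the general fact that an affine isomorphism carries the convex hull of a finite point set to the convex hull of the image points: since $\SIM_{p,K}$ and $\cim_{p,K}$ are defined as the convex hulls of $\{u_{\cg^\ci}\}$ and $\{c_{\cg^\ci}\}$ over the \emph{same} indexing family of $\ci$-DAGs, applying $\Phi$ and commuting it past the convex-hull operation yields $\Phi(\SIM_{p,K}) = \cim_{p,K}$. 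Restricting the ambient affine bijection $\Phi$ to $\SIM_{p,K}$ therefore gives the desired affine isomorphism.

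There is essentially no serious obstacle here; the proposition is immediate once one records that $\Phi$ is affine and bijective and commutes with convex hulls, which is exactly why the paper labels it ``immediate.'' The only point requiring a sentence of care is the invertibility of the linear part of $\Phi$: one should state that the matrix indexed by subsets of $[p+K]$ with $(S,T)$-entry equal to $1$ when $S\subseteq T$ and $0$ otherwise is unitriangular under any linear extension of reverse inclusion, hence invertible, with inverse supplied by M\"obius inversion on the Boolean lattice. Everything else---that affine maps preserve the convex-combination structure, and that both polytopes are hulls over the identical generator set---follows directly from the definitions already in the excerpt.
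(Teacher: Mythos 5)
Your proposal is correct and follows essentially the same route as the paper: both rely on the affine transformation $u \mapsto c$ from Definition~\ref{def:characteristicimset} being an ambient affine bijection (invertible via M\"obius inversion) that sends the generating set $\{u_{\cg^\ci}\}$ to $\{c_{\cg^\ci}\}$, and hence carries one convex hull onto the other. The paper states this in one line as a restriction of the isomorphism $\SIM_{p+K}\to\cim_{p+K}$; you have merely spelled out the same argument in more detail.
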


\begin{proof}
By Definition~\ref{def:characteristicimset} there is an affine isomorphism $\SIM_{p+K} \to \cim_{p+K}$ which maps $u_{\cg^\ci} \mapsto c_{\cg^\ci}$. This isomorphism restricts to an isomorphism $\SIM_{p,K} \to \cim_{p,K}$.    
\end{proof}

We later express the Bayesian information criterion as a linear functional over $\SIM_{p,K}$ in the Gaussian setting (Theorem~\ref{thm:IBIC}). More generally, any regular decomposable score function will factor as a linear functional over $\SIM_{p,K}$, see \cite{studeny2006probabilistic}. 

As in the setting without interventions, there are many subpolytopes of $\cim_{p,K}$ that are of interest in causal discovery.

\begin{definition}
\label{def:fixedInterventionalCIM}
Let $p$ be a positive integer and fix a sequence of intervention targets $\ci = (\emptyset,I_1,\dots,I_K)$. Then
\[\cim_p^\ci := \conv\left\{ c_{\cg^\ci}: \cg~\mbox{is a DAG on}~[p]\right\}.\]
\end{definition}

Given a sequence of $K$ experiments, the targeted vertices may be unknown if the specific way in which the observational distribution was affected by each experiment is unknown. 
In this case one may learn the intervention targets and the DAG simultaneously via a linear program over $\cim_{p,K}$. However, in some cases, such as for gene knockout experiments with CRISPR-Cas9 \cite{dixit2016perturb,sachs2005causal}, the intervention targets may be specified. 
In such cases, one may reduce the dimension of the polytope and solve a linear program in a significantly smaller space. This is illustrated in the following proposition.

\begin{proposition}
\label{prop:interventionTargetFace}
Let $p$ be a positive integer and let $\ci = (\emptyset, I_1,\dots,I_K)$ be a fixed sequence of intervention targets. Then $\cim_p^\ci$ is a face of $\cim_{p,K}$.
\end{proposition}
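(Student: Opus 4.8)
The plan is to exhibit a single linear inequality that is valid for $\cim_{p,K}$ and whose induced face is exactly $\cim_p^\ci$. I would rely on the standard fact from polytope theory that if $P = \conv(V)$ for a finite set $V$ and $a\cdot x \leq b$ is valid for $P$, then the face it defines satisfies $\{x\in P : a\cdot x = b\} = \conv\{v\in V : a\cdot v = b\}$. Since $\cim_{p,K}$ is by definition the convex hull of the imsets $c_{\cg^{\ci'}}$ ranging over all DAGs $\cg$ on $[p]$ and all target sequences $\ci' = (\emptyset, I_1',\dots,I_K')$, it suffices to produce an inequality that is tight precisely on those generators whose target sequence is the fixed $\ci$, irrespective of $\cg$.

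The key structural observation is that the intervention targets are recorded directly in a specific block of coordinates of the characteristic imset. Using the combinatorial description in Theorem~\ref{thm:imsetinterp}, for a two-element set $\{z_k, i\}$ with $z_k\in\mathcal{Z}_\ci$ and $i\in[p]$, the coordinate $c_{\cg^{\ci'}}(\{z_k,i\})$ equals $1$ if and only if one of the two elements is a parent of the other. Since each $z_k$ is a source in any $\ci$-DAG (it has no incoming edges) and the only possible edge between $z_k$ and $i$ is $z_k\rightarrow i$, this coordinate equals $1$ exactly when $i\in I_k'$ and $0$ otherwise. Crucially, this value is independent of the DAG structure on $[p]$ and depends only on the target sequence $\ci'$.

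With this in hand I would define the linear functional
\[
\ell(x) := \sum_{k=1}^K\left(\sum_{i\in I_k} x(\{z_k,i\}) - \sum_{i\notin I_k} x(\{z_k,i\})\right),
\]
and set $b := \sum_{k=1}^K |I_k|$. Evaluating $\ell$ on a generator $c_{\cg^{\ci'}}$ and substituting the coordinate formula above gives $\ell(c_{\cg^{\ci'}}) = \sum_{k=1}^K\left(|I_k\cap I_k'| - |I_k'\setminus I_k|\right)$. Each summand is bounded by $|I_k\cap I_k'| \leq |I_k|$, so $\ell(c_{\cg^{\ci'}}) \leq b$ for every generator; this establishes validity on all vertices and hence on $\cim_{p,K}$. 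Equality forces $I_k'\setminus I_k = \emptyset$ and $|I_k\cap I_k'| = |I_k|$ simultaneously for every $k$, which holds if and only if $\ci' = \ci$. Consequently the face $\{x\in\cim_{p,K} : \ell(x) = b\}$ is the convex hull of exactly the generators $c_{\cg^\ci}$ with $\cg$ a DAG on $[p]$, which is $\cim_p^\ci$ by Definition~\ref{def:fixedInterventionalCIM}.

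The step requiring the most care is the equality analysis, namely verifying that tightness of $\ell$ isolates the correct target sequence and no other. This reduces to the elementary set-counting bound $|I_k\cap I_k'| - |I_k'\setminus I_k| \leq |I_k|$ together with its equality case; the only genuine subtlety is confirming that distinct target sequences induce distinct values of the relevant coordinates, which is precisely the clean formula $c_{\cg^{\ci'}}(\{z_k,i\}) = \mathbbm{1}[i\in I_k']$ derived above. Once this is settled, the identification of the face with $\cim_p^\ci$ is immediate from the convex-hull description of faces.
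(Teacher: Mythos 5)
Your proposal is correct and takes essentially the same approach as the paper: the paper's proof defines exactly the same functional (coefficient $+1$ on coordinates $\{z_k,i\}$ with $i\in I_k$, $-1$ on those with $i\notin I_k$, and $0$ elsewhere) and asserts that it is maximized precisely on the imsets with target sequence $\ci$. Your write-up simply spells out the details behind the paper's ``clearly,'' namely the coordinate formula $c_{\cg^{\ci'}}(\{z_k,i\})=\mathbbm{1}[i\in I_k']$ and the equality analysis $|I_k\cap I_k'|-|I_k'\setminus I_k|\leq |I_k|$.
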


\begin{proof}
Define a functional $a \in \left(\mathbb{R}^{2^{p + |I|} - p - |I| - 1}\right)^*$ to have $S$ coordinate
\[a_S = \begin{cases}
    1 &\mbox{if}~S=\{i,z_j\}~\mbox{where}~i \in I_j \\
    -1 &\mbox{if}~S=\{i,z_j\}~\mbox{where}~i \not\in I_j \\
    0 &\mbox{otherwise.}
\end{cases}\]
For a DAG $\cg$ with intervention targets $\cj$, clearly $a \cdot c_{\cg^\cj}$ is maximized if and only if $\cj = \ci$.
\end{proof}

If one also knows the skeleton of the DAG to be learned, then the search space can be further narrowed.

\begin{definition}
\label{def:cimGI}
Let $G$ be an undirected graph on $[p]$ and let $\ci = (\emptyset, I_1,\dots,I_K)$ be a sequence of intervention targets. Then
\[\cim_G^\ci = \conv\left\{ c_{\cg^\ci}: \cg~\mbox{is a DAG on}~[p]~\mbox{with}~\skel(\cg) = G\right\}.\]
\end{definition}

\begin{proposition}
\label{prop:cimGI}
Let $G$ be a undirected graph on $[p]$ and let $\ci = (\emptyset,I_1,\dots,I_K)$ be a sequence of intervention targets. Then $\cim_G^\ci$ is a face of both $\cim_p^\ci$ and $\cim_{p,K}$.
\end{proposition}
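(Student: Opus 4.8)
The plan is to prove the statement in two steps, exploiting transitivity of the face relation on polytopes: it suffices to exhibit $\cim_G^\ci$ as a face of $\cim_p^\ci$, since $\cim_p^\ci$ is already a face of $\cim_{p,K}$ by Proposition~\ref{prop:interventionTargetFace}, and a face of a face is again a face. This reduces the problem to an interventional analogue of Proposition~\ref{prop:skeletalfaces} for the single fixed intervention sequence $\ci$.

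First I would isolate the coordinates of the characteristic imset that record the skeleton. By Theorem~\ref{thm:imsetinterp}, for a two-element set $\{i,j\} \subseteq [p]$ one has $c_{\cg^\ci}(\{i,j\}) = 1$ if and only if $i$ and $j$ are adjacent in $\cg^\ci$. Since the only edges of $\cg^\ci$ not already present in $\cg$ are the intervention edges $z_k \to i$ emanating from the added nodes $\mathcal{Z}_\ci$, the adjacency of $i,j \in [p]$ in $\cg^\ci$ coincides with their adjacency in $\cg$. Hence $c_{\cg^\ci}(\{i,j\}) = c_\cg(\{i,j\})$ for all $i,j \in [p]$, so these coordinates read off exactly $\skel(\cg)$, independently of the fixed intervention structure.

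Next I would construct the separating functional. Writing $E$ for the edge set of $G$, define $b$ by $b_S = 1$ if $S = \{i,j\} \in E$, by $b_S = -1$ if $S = \{i,j\}$ with $i,j \in [p]$ and $\{i,j\} \notin E$, and by $b_S = 0$ otherwise. For any DAG $\cg$ on $[p]$ with the fixed targets $\ci$, letting $D$ denote the edge set of $\skel(\cg)$, the previous step gives
\[
b \cdot c_{\cg^\ci} = |E \cap D| - |D \setminus E| \leq |E|,
\]
with equality if and only if $E \subseteq D$ and $D \subseteq E$, i.e.\ if and only if $\skel(\cg) = G$. Thus $b \cdot x \leq |E|$ is a valid inequality on $\cim_p^\ci$ whose induced face is precisely $\cim_G^\ci$. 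Combining this with Proposition~\ref{prop:interventionTargetFace} via transitivity of the face relation completes the proof; alternatively, one can add a sufficiently small positive multiple of $b$ to the functional $a$ of Proposition~\ref{prop:interventionTargetFace} to cut out $\cim_G^\ci$ as a face of $\cim_{p,K}$ directly.

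I expect no serious obstacle, as the argument closely parallels the non-interventional Proposition~\ref{prop:skeletalfaces}. The one point requiring care is the bookkeeping in the second step: one must verify that passing from $\cg$ to $\cg^\ci$ leaves the pairwise adjacencies among the original nodes $[p]$ unchanged, so that the skeleton-detecting coordinates are genuinely unaffected by the fixed interventions. Once this is confirmed, the functional behaves exactly as in the skeleton-only setting and the maximization argument goes through verbatim.
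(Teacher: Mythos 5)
Your proposal is correct and takes essentially the same approach as the paper's proof: you reduce to showing $\cim_G^\ci$ is a face of $\cim_p^\ci$ via Proposition~\ref{prop:interventionTargetFace} and transitivity of the face relation, and then expose $\cim_G^\ci$ by a functional that promotes edges of $G$ and penalizes nonedges, exactly as the paper indicates. The only difference is that you supply the details the paper leaves implicit, namely that the size-two coordinates of $c_{\cg^\ci}$ read off $\skel(\cg)$ by Theorem~\ref{thm:imsetinterp}, and the explicit verification that $b\cdot c_{\cg^\ci}\leq |E|$ with equality exactly when $\skel(\cg)=G$.
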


\begin{proof}
It suffices to show that $\cim_G^\ci$ is a face of $\cim_p^\ci$ since by Proposition~\ref{prop:interventionTargetFace} we know $\cim_p^\ci$ is a face of $\cim_{p,K}$. Analogous to the proof of Proposition~\ref{prop:interventionTargetFace}, $\cim_p^\ci$ is exposed by a functional which penalizes nonedges of $G$ and promotes edges of $G$.
\end{proof}

More generally, given a subset $\mathcal{S}$ of characteristic imsets $c_{\cg^\ci}$ for a sequence of intervention targets $\ci$, we denote their convex hull with $\CIM_\mathcal{S}^\ci$.

\section{Notation}\label{sec:notation}


This section provides some notation that will be used in the derivation of the proof of Theorem~\ref{thm:mainthm}. 
This notation will be used throughout Sections~\ref{section:starGraphs},~\ref{section:interventionsontrees} and~\ref{section:eliminatingInterventions}. Throughout, given a graph $G$ we will denote its vertex set by $V(G)$.
In these sections, we will focus on models for $\ci$-DAGs $\cg^\ci$ where $\cg = ([p], E)$ is a DAG on node set $[p]$ and $\ci$ is a sequence of intervention targets $\ci = (I_0 = \emptyset, I_1, \ldots, I_K)$ where $|I_k| = 1$ for all $k\in [K]$.
For each target $I_k = \{i\}$, we may also represent the node $z_k$ in the $\ci$-DAG $\cg^\ci$ simply by $i^\prime$. 
Since each nonempty intervention target $I_k$ is a singleton, we may represent the sequence of intervention targets with the set $I = \bigcup_{i\in[K]}I_k$. 
Analogously, we denote the corresponding the interventional CIM polytope $\CIM_\mathcal{S}^\ci$ simply by $\CIM_\mathcal{S}^I$. 

Given an undirected graph $G = ([p], E)$ and a sequence of intervention targets $\ci$ with all nonempty targets being singletons, we denote by $G^I$ the partially directed graph on node set $[p]\cup \{i^\prime : i\in I\}$ with edge set
\[
E \cup \{i^\prime \rightarrow i : i\in I\}. 
\]

We will also make use of certain undirected graphs, commonly called star graphs in the combinatorics literature. 
A \emph{star graph with center node $c$} is an undirected tree $S = ([p], E)$ with $p \geq 3$ where each node in $[p]\setminus c$ is a \emph{leaf node} (a degree 1 node) in $S$. 
The unique edge incident to a leaf node is called a \emph{leaf}.
We emphasize that, for the purposes of this article, star graphs have at least 2 leaf nodes.

Additionally, we will say a directed edge in a DAG $\cg$ is \emph{essential} if the edge points the same direction in all elements of the MEC (or $\ci$-MEC) of $\cg$. 

\begin{example}
    \label{ex:notation}
    Consider the star graph $S$ on node set $\{a,b,c,d\}$ with center node $c$ and the sequence of intervention targets $\ci = (I_0 = \emptyset, I_1 = \{a\}, I_2 = \{d\})$ depicted in Figure~\ref{fig:starInterventions}. 
    Then $I = \{a,d\}$. 
    The DAG $\cg$, on the left in Figure~\ref{fig:starInterventions} has skeleton $S$, depicted to its right. 
    To the right of $S$ is an $\ci$-DAG $\cg^\ci$ with skeleton $S$ and intervention targets $\ci$. 
    To further indicate that the nodes $a$ and $d$ are targeted by interventions, we color them white. 
    Note that the edges $a\rightarrow c$ and $c\rightarrow b$ are essential due to the presence of the edge $a'\rightarrow a$ as reversing either (or both) of $a\rightarrow c$ or $c\rightarrow b$ results in a new v-structure and hence a DAG not in the same $\ci$-MEC of $\cg$ via Theorem~\ref{thm:Iverma}. 
    Similarly, the edge $c\rightarrow d$ is essential due to the presence of the edge $d'\rightarrow d$. 
    
    The partially directed graph $S^I$ is the right-most graph in Figure~\ref{fig:starInterventions}.
    The interventional CIM polytope $\CIM_S^I$ is the convex hull of the characteristic imset vectors of all $\ci$-DAGs $\cg^\ci$ where $\cg$ has skeleton $S$. 
\end{example}


\section{Leaf Interventions on Star Graphs}
\label{section:starGraphs}
Throughout this section we let $S$ be a star graph with center node $c$ and consider interventions on a subset $I$ of the leaves of $S$. Recall that in Section~\ref{sec:notation} we defined star graphs to have at least 3 nodes, and so at least 2 leaves. We compute the facet-defining inequalities of $\cim_S^I$, which have three distinct types, and give combinatorial interpretations of the inequalities. 

\subsection{Star Inequalities}
\label{subsection:starInequalities} 
We first consider the following inequality.

\begin{definition}\label{def:starinequality}
Let $S'$ be a star subgraph of $S$. The \emph{star inequality} associated to $S'$ is
\begin{equation}
\label{ieq:starInequality}
\sum\limits_{V(S') \subseteq A \subseteq V(S)} (-1)^{\#A \setminus V(S')} x_A \geq 0.
\end{equation}
\end{definition}

\begin{example}
Consider the star with leaf interventions in Figure~\ref{fig:starInterventions}. The inequalities defining $\cim_S^I$ are
\begin{align*}
x_{abc} - x_{abcd} \geq 0 \quad\quad&\left(x_{aa' c} \right) + \left( x_{abc} + x_{acd} - x_{abcd} \right) \leq 1 \\
x_{bcd} - x_{abcd} \geq 0 \quad\quad&\left(x_{cdd'} \right) + \left( x_{acd} + x_{bcd} - x_{abcd} \right) \leq 1 \\
x_{acd} - x_{abcd} \geq 0 \quad\quad&\left( 1 - x_{aa'c} \right) + \left(1 - x_{cdd'} \right) \leq 1 + x_{acd}. \\
x_{abcd} \geq 0 \quad\quad
\end{align*}
The left four inequalities are star inequalities.

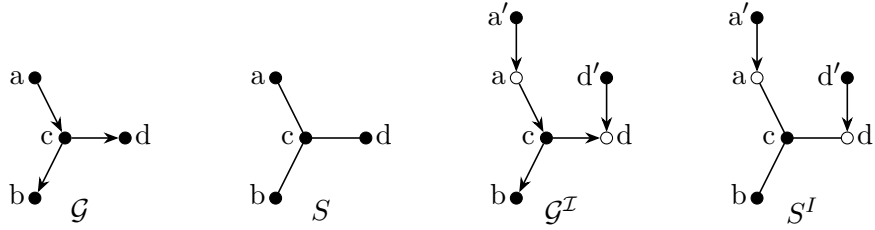
\begin{figure}
\centering
\begin{tikzpicture}[scale = .8]

\begin{scope}

\node[wB] (c) at (0-12,0) {};
\node[wB] (d) at (1-12,0) {};
\node[wB] (a) at (-0.5-12,1) {};
\node[wB] (b) at (-0.5-12,-1) {};

\draw[semithick, -Stealth] (a) -- (c);
\draw[semithick, -Stealth] (c) -- (b);
\draw[semithick, -Stealth] (c) -- (d);

\node at (-0.8-12,1) {a};
\node at (-0.8-12,-0.93) {b};
\node at (-0.3-12,0) {c};
\node at (1.3-12,0.07) {d};

\node at (0.25-12, -1.2) {$\cg$};

\draw [semithick] (0-8,0)--(1-8,0);
\draw [semithick] (0-8,0)--(-0.5-8,1);
\draw [semithick] (0-8,0)--(-0.5-8,-1);

\node[wB] (c) at (0-8,0) {};
\node[wB] (d) at (1-8,0) {};
\node[wB] (a) at (-0.5-8,1) {};
\node[wB] (b) at (-0.5-8,-1) {};

\node at (-0.8-8,1) {a};
\node at (-0.8-8,-0.93) {b};
\node at (-0.3-8,0) {c};
\node at (1.3-8,0.07) {d};

\node at (0.25-8, -1.2) {$S$};

\draw [semithick, -Stealth] (-0.5-4,2)--(-0.5-4,1.1);
\draw [semithick, -Stealth] (1-4,1)--(1-4,0.1);

\node[wB] (c) at (0-4,0) {};
\node[wW] (d) at (1-4,0) {};
\node[wW] (a) at (-0.5-4,1) {};
\node[wB] (b) at (-0.5-4,-1) {};
\node[wB] at (-0.5-4,2) {};
\node[wB] at (1-4,1) {};

\draw[semithick, -Stealth] (a) -- (c);
\draw[semithick, -Stealth] (c) -- (b);
\draw[semithick, -Stealth] (c) -- (d);

\node at (-0.8-4,1) {a};
\node at (-0.8-4,-0.93) {b};
\node at (-0.3-4,0) {c};
\node at (1.3-4,0.07) {d};
\node at (-0.8-4,2.07) {a$'$};
\node at (0.7-4,1.07) {d$'$};

\node at (0.25-4, -1.2) {$\cg^\ci$};

\draw [semithick] (0,0)--(1,0);
\draw [semithick] (0,0)--(-0.5,1);
\draw [semithick] (0,0)--(-0.5,-1);
\draw [semithick, -Stealth] (-0.5,2)--(-0.5,1.1);
\draw [semithick, -Stealth] (1,1)--(1,0.1);

\node[wB] at (0,0) {};
\node[wW] at (1,0) {};
\node[wW] at (-0.5,1) {};
\node[wB] at (-0.5,-1) {};
\node[wB] at (-0.5,2) {};
\node[wB] at (1,1) {};

\node at (-0.8,1) {a};
\node at (-0.8,-0.93) {b};
\node at (-0.3,0) {c};
\node at (1.3,0.07) {d};
\node at (-0.8,2.07) {a$'$};
\node at (0.7,1.07) {d$'$};

\node at (0.25, -1.2) {$S^I$};
\end{scope}

\end{tikzpicture}

\caption{From left to right: A DAG $\cg$, its skeleton $S$, an $\ci$-DAG with leaf interventions where the arrows $a' \to a$ and $d' \to d$ depict the interventions and the graph $S^I$ for the skeleton $S$ and interventions $\ci = (\emptyset, \{a\}, \{d\})$.} 
\label{fig:starInterventions}
\end{figure}
\end{example}

We note that the definition of star inequalities does not depend on the intervention targets $I$, and so the following proof is also independent of $I$.

\begin{lemma}
\label{lemma:starInequalities}
Let $S'$ be a star subgraph of $S$. Then the star inequality associated to $S'$ defines a facet of $\cim_S^I$.
\end{lemma}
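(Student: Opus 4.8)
The plan is to prove validity and then identify exactly which vertices of $\cim_S^I$ meet the supporting hyperplane. Write $V(S') = \{c\}\cup L'$, where $L'\subseteq V(S)\setminus\{c\}$ is the leaf set of $S'$; since a leaf of $S$ has degree $1$ and cannot be a star center, the center of $S'$ is necessarily $c$, and since star graphs have at least two leaves we have $|L'|\geq 2$. First I would evaluate the left-hand side of \eqref{ieq:starInequality} on an arbitrary vertex $c_{\cg^\ci}$. Every index set $A$ in the sum satisfies $\{c\}\cup L'\subseteq A\subseteq V(S)$, so $c\in A$ and $|A|\geq 3$. Because the leaves of $S$ are pairwise non-adjacent, Theorem~\ref{thm:imsetinterp} forces the only node of $A$ that could have all the others as parents to be $c$; moreover, since $A$ contains no intervention node $i'$, one checks $c_{\cg^\ci}(A)=c_\cg(A)=\ind[\,A\setminus\{c\}\subseteq\pa_\cg(c)\,]$, which already explains why the argument does not see $I$.

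Writing each index set as $A=\{c\}\cup L'\cup B$ with $B\subseteq V(S)\setminus V(S')$, so that $\#A\setminus V(S')=\#B$, the alternating sum becomes $\sum_{B\subseteq V(S)\setminus V(S')}(-1)^{\#B}\,\ind[\,L'\cup B\subseteq\pa_\cg(c)\,]$. Factoring the indicator and summing the geometric-type alternating product then gives
\[
\ind[\,L'\subseteq\pa_\cg(c)\,]\prod_{\ell\in V(S)\setminus V(S')}\bigl(1-\ind[\,\ell\in\pa_\cg(c)\,]\bigr)=\ind[\,\pa_\cg(c)=L'\,],
\]
where the last equality uses that $\pa_\cg(c)$ consists only of leaves. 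I expect this factorization to be the main computational step, though it is short once the single-candidate observation above is in place.

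The formula shows the star inequality takes only the values $0$ and $1$, so in particular it is valid. It equals $1$ exactly on the imset of the unique orientation $\cg_0$ of $S$ that directs precisely the edges of $S'$ into $c$ and all remaining leaf-edges out of $c$: the condition $\pa_\cg(c)=L'$ pins down the orientation of every leaf-edge, hence determines a single $\ci$-DAG $\cg_0^\ci$ and therefore a single vertex $v^\ast=c_{\cg_0^\ci}$. Any two vertices on which the left-hand side equals $1$ would have representatives with $\pa(c)=L'$, hence both would equal $c_{\cg_0^\ci}$ and coincide; so $v^\ast$ is the only vertex off the hyperplane, and every other vertex satisfies equality.

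Finally I would invoke the elementary polytope fact that a valid inequality tight on all vertices of $P$ but exactly one is automatically facet-defining. Concretely, the induced face $F$ equals the convex hull of the vertices other than $v^\ast$; it is a proper face because $v^\ast\notin F$, and adjoining the single point $v^\ast$ to $\operatorname{aff}(F)$ raises the affine dimension by at most one, which forces $\dim F=\dim\cim_S^I-1$. Applying this with $P=\cim_S^I$ and $v^\ast=c_{\cg_0^\ci}$ yields the lemma. The only point requiring care is the uniqueness of the violating vertex, but this is clean here since $\pa_\cg(c)=L'$ leaves no freedom in orienting the star.
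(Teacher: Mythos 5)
Your proof is correct and takes essentially the same approach as the paper: both evaluate the star functional at an imset vertex using the combinatorial interpretation of $c_\cg$, reduce it to the indicator of $\fa_\cg(c) = V(S')$ (your $\pa_\cg(c) = L'$), and conclude that exactly one vertex fails to satisfy equality, which forces the face to be a facet. Your write-up merely makes explicit two points the paper leaves implicit — that the inequality does not see the intervention nodes, and the ``valid inequality tight at all vertices but one'' dimension argument.
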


\begin{proof}
Let $\cg$ be a DAG with skeleton $S$. 
Then $c_\cg(A) = 1$ if and only if $c\in A \subseteq \fa_\cg(c)$. 
It follows that
\[\sum\limits_{V(S') \subseteq A \subseteq V(S)} (-1)^{\#A \setminus V(S')} c_\cg(A) = \sum\limits_{V(S') \subseteq A \subseteq \fa_\cg(c)} (-1)^{\#A \setminus V(S')} = \begin{cases}
    1 &\mbox{if}~V(S') = \fa_\cg(c) \\
    0 &\mbox{otherwise}.
\end{cases}\]
Consequently Inequality~\ref{ieq:starInequality} is valid and defines a face $\mathcal{F}$ of $\cim_S^I$. The only DAG resulting in a characteristic imset vector not in $\mathcal{F}$ is given by orienting edges in $S'$ towards $c$ and all other edges away from $c$. 
Hence, $\mathcal{F}$ is a facet.
\end{proof}

\subsection{Bidirected-edge inequalities}
The remaining inequalities defining $\cim_S^I$ depend on $I$. For each internal edge of $S^I$ we now define an inequality which, in a sense we make precise, cuts off imset vectors which correspond to graphs with bidirected edges.

\begin{definition}
\label{definition:bidirectedEdgeInequality}
Let $S$ be a star graph and let $u - v$ be an internal edge of $S^I$. We define a functional

\[\ind_{v \from u}(x) = \begin{cases}
\sum\limits_{\substack{A \subseteq V(S) \setminus \{u,v\} \\ |A| \geq 1} }(-1)^{|A|+1} x_{A \cup \{u,v\}} &~\mbox{if}~v \not\in I \\
x_{uvv'} &\mbox{otherwise.}
\end{cases}\]
The \emph{bidirected-edge inequality} associated to $u - v$ is
\[\ind_{v \from u}(x) + \ind_{u \from v}(x) \leq 1.\]
\end{definition}

\begin{notation}\label{not:directionalitymatters}
The functional $\ind_{v \from u}(x)$ acts as an indicator function on $c_\cg$ for the event that $v \from u$ is part of a v-structure (and similarly $\ind_{u \from v}$) which we will demonstrate in the following example. Note that the direction of the arrow in the notation $\ind_{v \from u}(x)$ is specific.  Later on we will define an additional linear function in which the direction is reversed; e.g., $\ind_{u \to v}(x)$, and the interpretation of these functions will be distinct (see Notation~\ref{not:directionalitymatterstoo}).
\end{notation}

\begin{example}
We again consider the star with leaf interventions in Figure~\ref{fig:starInterventions}. Recall that the inequalities defining $\cim_S^I$ are
\begin{align*}
x_{abc} - x_{abcd} \geq 0 \quad\quad&\left(x_{aa' c} \right) + \left( x_{abc} + x_{acd} - x_{abcd} \right) \leq 1 \\
x_{bcd} - x_{abcd} \geq 0 \quad\quad&\left(x_{cdd'} \right) + \left( x_{acd} + x_{bcd} - x_{abcd} \right) \leq 1 \\
x_{acd} - x_{abcd} \geq 0 \quad\quad&\left( 1 - x_{aa'c} \right) + \left(1 - x_{cdd'} \right) \leq 1 + x_{acd}. \\
x_{abcd} \geq 0 \quad\quad
\end{align*}
We saw earlier that the left four inequalities are all star inequalities. The top two inequalities on the right are both bidirected-edge inequalities associated to the internal edges $a-c$ and $d-c$ respectively. For example, observe that since there is an intervention on node $a$, meaning $a \in I$, the indicator that the edge $a \leftarrow c$ is present is simply $\ind_{a \leftarrow c} = x_{aa'c}$ by definition. On the other hand,
\[
\ind_{c \leftarrow a} 
= \sum\limits_{\substack{A \subseteq \{b,d\} \\ |A| \geq 1} }(-1)^{|A|+1} x_{A \cup \{a,c\}}  = x_{abc} + x_{acd} - x_{abcd}.
\]
So we see that the top inequality on the right hand side in our original list is the bidirected-edge inequality
\[
\left(x_{aa' c} \right) + \left( x_{abc} + x_{acd} - x_{abcd} \right) = \ind_{a \leftarrow c} + \ind_{c \leftarrow a} \leq 1,
\]
which is the bidirected-edge inequality associated to the internal edge $a-c$. 
\end{example}

\begin{proposition}
\label{proposition:atLeastOneColliderIndicator}
Let $S$ be a star tree, let $I$ be a subset of the leaves of $S$, and let $v$ be an non-leaf node of $S^I$. Let $\cg^\ci$ be an $\ci$-DAG with skeleton $S$ and set of leaf targets $I$. For any edge $u - v$ of $S^I$ we have
\[\ind_{v \from u}(c_{\cg^\ci}) = \begin{cases}
    1 &\mbox{if}~ v \from u ~\mbox{is part of a v-structure in}~{\cg^\ci} \\
    0 &\mbox{otherwise}.
\end{cases}\]
\end{proposition}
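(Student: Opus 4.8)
The plan is to evaluate the functional $\ind_{v \from u}$ on $c_{\cg^\ci}$ directly, using the combinatorial description of characteristic imsets from Theorem~\ref{thm:imsetinterp}, and then to compare the resulting value with the combinatorial condition that the edge $u \to v$ participates in a v-structure at $v$. I would organize the argument around the two branches of Definition~\ref{definition:bidirectedEdgeInequality}, namely $v \notin I$ and $v \in I$. Since the only non-leaf nodes of $S^I$ are the center $c$ and the intervened leaves, these two branches correspond exactly to $v = c$ (so $u$ is a skeleton-neighbor of $c$, i.e.\ a leaf of $S$) and $v$ an intervened leaf (whose only skeleton-neighbor is $c$, forcing $u = c$).

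A preliminary reduction I would record is that for any $B \subseteq V(S)$ one has $c_{\cg^\ci}(B) = c_\cg(B)$: any parent a node $w$ acquires in passing from $\cg$ to $\cg^\ci$ is its intervention node $w'$, and $w' \notin V(S) \supseteq B$, so Theorem~\ref{thm:imsetinterp} returns the same value for both imsets. Moreover, because the skeleton of $\cg$ is the star $S$ with center $c$, for $|B| \ge 3$ the only node of $B$ adjacent to all the others is $c$; hence $c_\cg(B) = 1$ if and only if $c \in B$ and every element of $B \setminus c$ is a parent of $c$ in $\cg$.

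For the branch $v = c \notin I$, the edge is $u - c$ with $u$ a leaf of $S$, and each indexing set $A \cup \{u,c\}$ lies in $V(S)$, so I may replace $c_{\cg^\ci}$ by $c_\cg$. By the reduction, $c_\cg(A \cup \{u,c\}) = 1$ precisely when $u \to c$ and $A \subseteq \pa_\cg(c) \setminus \{u\}$. If $u \not\to c$ the entire sum vanishes; otherwise it collapses to $\sum_{\emptyset \neq A \subseteq P}(-1)^{|A|+1}$, where $P = \pa_\cg(c) \setminus \{u\}$, and the identity $\sum_{A \subseteq P}(-1)^{|A|} = 0$ for $P \neq \emptyset$ shows this equals $1$ when $P \neq \emptyset$ and $0$ when $P = \emptyset$. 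Thus $\ind_{c \from u}(c_{\cg^\ci}) = 1$ iff $u \to c$ and $c$ has a second parent; since all parents of $c$ are leaves and hence pairwise non-adjacent, this is exactly the condition that $u \to c$ participates in a v-structure at $c$.

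For the branch $v \in I$, we have $u = c$ and $\ind_{v \from c}(x) = x_{cvv'}$, so I would apply Theorem~\ref{thm:imsetinterp} to the set $\{c, v, v'\}$. Checking each candidate center: $c$ and $v'$ are non-adjacent and $v' \to v$, so only $w = v$ can serve, giving $c_{\cg^\ci}(\{c,v,v'\}) = 1$ iff $c \to v$. As $v$ is a leaf of $S$, its only possible parents are $c$ and $v'$, which are non-adjacent, so $c \to v$ lies in a v-structure at $v$ exactly when $c \to v$, matching the computed value. The portion requiring the most care is the inclusion–exclusion collapse in the first branch, together with verifying that in each case the imset evaluation and the graphical condition "$u \to v$ is part of a v-structure" coincide; everything else is bookkeeping with Theorem~\ref{thm:imsetinterp} and the star structure.
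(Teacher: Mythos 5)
Your proposal is correct and takes essentially the same route as the paper's own proof: it splits on $v \in I$ versus $v = c \notin I$, evaluates the single coordinate $x_{cvv'}$ directly in the first case, and in the second collapses the alternating sum over the set of remaining parents of $c$ (your $P$ is exactly the paper's $X$) via inclusion--exclusion, matching the result to the v-structure condition. The only difference is that you spell out details the paper leaves implicit, such as the reduction $c_{\cg^\ci}(B) = c_\cg(B)$ for $B \subseteq V(S)$ and the non-adjacency of the leaves forming the v-structure.
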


\begin{proof}
If $ v \in I$, then it is clear that $x_{uvv'} = 1$ if $u \to v$ and 0 otherwise. Hence we will focus on the case $v \not\in I$. Since $v$ is not a leaf node of $S^I$, $v$ must be the center of $S$. First consider the case where $v \to u$ in ${\cg^\ci}$. In this case, every coordinate $c_{\cg^\ci}(A \cup \{u,v\})$ in the functional $\ind_{v \from u}(c_{\cg^\ci})$ is 0 and the functional evaluates to 0.

Otherwise we have $v \from u$ in ${\cg^\ci}$. Let $X = \{x \in V(S) \setminus \{u\}: x \to v\}$. Then for any nonempty $A \subseteq V(S) \setminus \{u,v\}$, we have $c_{\cg^\ci}(A \cup \{u,v\}) = 1$ if and only if $A \subseteq X$. It follows that
\[\sum\limits_{\substack{A \subseteq V(S) \setminus \{u,v\} \\ |A| \geq 1} }(-1)^{|A|+1} c_{\cg^\ci}(A \cup \{u,v\}) = \sum\limits_{\substack{A \subseteq X \\ |A| \geq 1} }(-1)^{|A|+1} = 1 + \sum\limits_{A \subseteq X}(-1)^{|A|+1} = \begin{cases}
    1 &\mbox{if}~X \neq \emptyset \\
    0 &\mbox{otherwise.}
\end{cases}\]
By the definition of $X$ we have that $\ind_{v \from u}(c_{\cg^\ci}) = 1$ if there exists $w \in V(S) \setminus \{u\}$ such that $u \to v \from w$ and is 0 otherwise.
\end{proof}

\begin{lemma}
\label{lemma:bidirectededgeinequality}
Let $S$ be a star tree and let $I$ be a subset of the leaves of $S$. For any internal edge of $S^I$, the associated bidirected-edge inequality defines a facet of $\cim_S^I$.
\end{lemma}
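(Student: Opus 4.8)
The plan is to prove validity using the collider interpretation from Proposition~\ref{proposition:atLeastOneColliderIndicator}, and then establish the facet property by showing that \emph{exactly one} vertex of $\cim_S^I$ fails to lie on the face cut out by the inequality. First I would pin down the internal edges of $S^I$: the center $c$ has degree at least two, each intervened leaf $w \in I$ has degree two (adjacent to $c$ and to its intervention node $w'$), while every non-intervened leaf and every intervention node has degree one. Hence the internal edges of $S^I$ are exactly the edges $c - w$ with $w \in I$, and it suffices to treat one such edge.

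Write $\mathcal{F}$ for the face of $\cim_S^I$ defined by $\ind_{c \from w}(x) + \ind_{w \from c}(x) \leq 1$. By Proposition~\ref{proposition:atLeastOneColliderIndicator}, for every $\ci$-DAG $\cg^\ci$ with $\skel(\cg) = S$ both values $\ind_{c \from w}(c_{\cg^\ci})$ and $\ind_{w \from c}(c_{\cg^\ci})$ lie in $\{0,1\}$ and indicate whether $c \from w$ (respectively $w \from c$) participates in a v-structure. They cannot both equal $1$, since the first forces $w \to c$ while the second forces $c \to w$; thus the inequality holds at every vertex, and by linearity it is valid on $\cim_S^I$.

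It remains to identify the vertices off $\mathcal{F}$, namely the imsets with $\ind_{c \from w}(c_{\cg^\ci}) = \ind_{w \from c}(c_{\cg^\ci}) = 0$. Since $w \in I$, we have $\ind_{w \from c} = x_{cww'}$, which detects $c \to w$, so $\ind_{w \from c} = 0$ forces $w \to c$; and given $w \to c$, the condition $\ind_{c \from w} = 0$ forces $w$ to be the unique leaf-parent of $c$, i.e. $c \to \ell$ for every leaf $\ell \neq w$. In a star every edge is incident to $c$, so these two conditions determine the orientation of every edge of $\cg^\ci$. Consequently there is exactly one such $\ci$-DAG, yielding a single vertex $v_0 \notin \mathcal{F}$. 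As $\cim_S^I$ is the convex hull of its $0/1$ characteristic-imset vertices $V$, we get $\mathcal{F} = \conv(V \setminus \{v_0\})$; moreover the affine functional $x \mapsto \ind_{c \from w}(x) + \ind_{w \from c}(x)$ equals $1$ on all of $\mathcal{F}$ but is strictly smaller at $v_0$, so $v_0$ lies outside the affine hull of $\mathcal{F}$. Therefore $\dim \mathcal{F} = \dim \cim_S^I - 1$ and $\mathcal{F}$ is a facet.

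The crux, and the step I would check most carefully, is verifying that the two vanishing conditions genuinely single out one orientation (hence one vertex): this relies on the star structure, which forces every edge to meet the center, together with the precise collider interpretation supplied by Proposition~\ref{proposition:atLeastOneColliderIndicator}. Once this uniqueness is in hand, the facet conclusion follows from the general principle that a valid inequality tight at all but exactly one vertex of a polytope is facet-defining.
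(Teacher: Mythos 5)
Your proof is correct and follows essentially the same route as the paper's: validity via the collider interpretation in Proposition~\ref{proposition:atLeastOneColliderIndicator}, then the facet property by exhibiting the unique $\ci$-DAG (all edges out of the center except $w \to c$) whose imset violates tightness. The only difference is cosmetic: you spell out the affine-hull argument for why ``tight at all vertices but one'' implies facet-defining, a principle the paper invokes implicitly by analogy with Lemma~\ref{lemma:starInequalities}.
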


\begin{proof}
Let $\cg^\ci$ be an $\ci$-DAG with set of leaf targets $I$, and let $u - v$ be an internal edge of $S^I$. 
We first show the validity of the bidirected-edge inequality. 
By Proposition~\ref{proposition:atLeastOneColliderIndicator} we know that 
\begin{equation*}
    \begin{split}
        \ind_{v \from u}(c_{\cg^\ci}) &= \begin{cases} 1 &\mbox{if}~ v \from u~ \mbox{is in a v-structure of}~\cg^\ci \\ 0 &\mbox{otherwise,} \end{cases} \\ 
        \ind_{u \from v}(c_{\cg^\ci}) &= \begin{cases} 1 &\mbox{if}~ u \from v~ \mbox{is in a v-structure of}~\cg^\ci \\ 0 &\mbox{otherwise.} \end{cases}
    \end{split}
\end{equation*}
The edge $u - v$ can only be oriented one direction, and so only one of these indicator functions can evaluate to 1. 
Hence
\[
\ind_{v \from u}(c_{\cg^\ci}) + \ind_{u \from v}(c_{\cg^\ci}) \leq 1,
\]
and so bidirected-edge inequalities are valid on $\cim_S^I$.

To show that bidirected-edge inequalities are facet-defining, we follow the strategy from Lemma~\ref{lemma:starInequalities} and show that there exists a unique vertex which does not lie on the face defined by the bidirected-edge inequality. Suppose that 
\[
\ind_{v \from u}(c_{\cg^\ci}) + \ind_{u \from v}(c_{\cg^\ci}) = 0,
\]
so that we know $\ind_{v \from u}(c_{\cg^\ci}) = \ind_{u \from v}(c_{\cg^\ci}) = 0$. 
Without loss of generality, suppose that $u$ is a leaf of $S$ and $v$ is the center. 
Since $\ind_{u \from v}(c_{\cg^\ci}) = 0$ and $u' \to u$ is essential in ${\cg^\ci}$, we know $v \from u$. 
However, $\ind_{v \from u}(c_{\cg^\ci}) = 0$, and so for each leaf $w \neq u$ of $S$, $v \to w$ in ${\cg^\ci}$. 
Thus, we constructed a unique DAG (and hence $\ci$-Markov equivalence class) whose characteristic imset vector does not lie on the face defined by the bidirected-edge inequality. This implies that the bidirected-edge inequality defines a facet.
\end{proof}

\subsection{The Forked-Tree Inequality}
\label{subsection:theForkedTreeInequality}

\begin{definition}
\label{def:starindicator}
Let $c$ be the center of $S$ and let $L$ be a subset of the leaves in $S$. We define a linear functional
\[\ind_{c \to L}(x) = \sum\limits_{\substack{c \in A \subseteq V(S) \setminus L \\ |A| \geq 3}} \sum\limits_{B \subseteq L} (-1)^{|A \cup B| - 1} \left( |A| -2 \right) x_{A \cup B}.\]
\end{definition}


\begin{notation}\label{not:directionalitymatterstoo}
We note the distinction between $\ind_{c \from u}(x)$ and $\ind_{c \to L}(x)$. 
Specifically, $\ind_{c \from u}(x)$ is equal to $1$ if and only if the edge $u\to c$ is an edge in some v-structure whereas $\ind_{c \to L}(x)$ is equal to $1$ if and only if there exists a v-structure with center node $c$ that forces the edges $c\to\ell$ to be essential for all $\ell \in L$. In particular, $\ind_{c \to L}(x)$ is an indicator function for the latter condition. 
The left node in the subscript edge is always the center $c$ and the forcing of edge orientations is caused by a v-structure at $c$, which we saw in Proposition~\ref{proposition:atLeastOneColliderIndicator} and will see in Proposition~\ref{proposition:edgesForcedOut}.
\end{notation}

We first consider the case of $L = \emptyset$.

\begin{proposition}
\label{proposition:vstructureIndicator}
Let $S$ be a star graph with center node $c$. Let $\cg$ be a DAG with skeleton $S$ and let $X = \pa_{\cg}(c)$. 
We then have 
\[\ind_{c \to \emptyset}( c_\cg ) = \begin{cases} 1 &\mbox{if}~ |X| \geq 2 \\ 0 &\mbox{otherwise.}\end{cases}\]
\end{proposition}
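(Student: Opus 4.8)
The plan is to evaluate the functional $\ind_{c \to \emptyset}$ directly on the vector $c_\cg$ using the combinatorial description of characteristic imsets for star skeletons, and then to reduce the result to a single alternating binomial sum. First I would recall from the proof of Lemma~\ref{lemma:starInequalities} (which is itself an instance of Theorem~\ref{thm:imsetinterp}) that when $\cg$ has skeleton the star $S$ with center $c$, the characteristic imset satisfies $c_\cg(A) = 1$ precisely when $c \in A \subseteq \fa_\cg(c) = \{c\} \cup X$, at least for $|A| \geq 2$. Since every set $A$ indexing a nonzero coefficient of $\ind_{c \to \emptyset}$ contains $c$ and has $|A| \geq 3$, only sets of this form contribute to the evaluation.

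Next I would specialize the definition of $\ind_{c \to L}$ to $L = \emptyset$. In this case the inner sum over $B \subseteq L$ collapses to the single term $B = \emptyset$, so that
\[
\ind_{c \to \emptyset}(x) = \sum_{\substack{c \in A \subseteq V(S) \\ |A| \geq 3}} (-1)^{|A|-1}(|A|-2)\, x_A.
\]
Substituting $x = c_\cg$ and keeping only the sets $A$ with $c_\cg(A) = 1$, I would write each contributing set uniquely as $A = \{c\} \cup A'$ with $A' \subseteq X$ and reindex by $k = |A'| = |A|-1$, where the constraint $|A| \geq 3$ becomes $k \geq 2$. Using $(-1)^{|A|-1} = (-1)^{k}$ and $|A|-2 = k-1$, this turns the evaluation into
\[
\ind_{c \to \emptyset}(c_\cg) = \sum_{k=2}^{|X|} \binom{|X|}{k}(-1)^{k}(k-1).
\]

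The only genuine computation, and thus the main obstacle, is evaluating this sum, though it is routine. Setting $n = |X|$, I would compare the restricted sum to the full binomial sum $\sum_{k=0}^{n} \binom{n}{k}(-1)^k(k-1)$, which equals $0$ for $n \geq 2$ by the standard identities $\sum_{k=0}^n \binom{n}{k}(-1)^k = 0$ and $\sum_{k=0}^n k\binom{n}{k}(-1)^k = -n(1-1)^{n-1} = 0$. Since the $k=1$ term of the full sum vanishes and the $k=0$ term equals $-1$, the sum over $k \geq 2$ equals $0 - (-1) - 0 = 1$ whenever $n \geq 2$. When $|X| \leq 1$ the index range $k \geq 2$ is empty, so the sum is $0$. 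This yields exactly the two cases claimed, completing the proof. Apart from carefully tracking the index shift $k = |A|-1$ and the $n \geq 2$ hypothesis needed for the binomial identities, there is no conceptual difficulty once the characterization of $c_\cg$ on a star is in hand.
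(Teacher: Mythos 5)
Your proposal is correct and follows essentially the same route as the paper: specialize the functional to $L=\emptyset$, restrict to the sets $A$ with $c \in A \subseteq \fa_\cg(c)$ where $c_\cg(A)=1$, complete the truncated alternating sum by adding back the low-order ($|A| \le 2$, i.e.\ $k \le 1$) terms, and evaluate via the standard vanishing identities $\sum_k \binom{n}{k}(-1)^k = 0$ and $\sum_k k\binom{n}{k}(-1)^k = 0$. The only difference is bookkeeping — the paper reindexes by $B = A\setminus\{c\}$ and writes the answer uniformly as $1 - \ind_{|X|=1} - \ind_{|X|=0} = \ind_{|X|\ge 2}$, whereas you reindex by $k = |A|-1$ and split into the cases $|X|\ge 2$ and $|X|\le 1$ — which is not a substantive distinction.
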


\begin{proof}
In the case where $L = \emptyset$, we sum over all $B \subseteq \emptyset$, and so the sum simplifies to
\[\ind_{c \to \emptyset}(x) = \sum\limits_{\substack{c \in A \subseteq V(S) \setminus L \\ |A| \geq 3}} (-1)^{|A| - 1} \left( |A| -2 \right) x_A.\]
Then $c_\cg(A) = 1$ if and only if $c \in A \subseteq X \cup \{c\}$, and so $\ind_{c \to \emptyset}^S(c_\cg)$ simplifies to
\[\sum\limits_{\substack{c \in A \subseteq X \cup \{c\} \\ |A| \geq 3}} (-1)^{|A|-1} (|A|-2).\]
We add and subtract all terms $(-1)^{|A|-1} (|A|-2)$ associated to $A$ of size 1 or 2 which yields
\[1+\sum\limits_{c \in A \subseteq X \cup \{c\}} (-1)^{|A|-1} (|A|-2).\]
We reindex this sum via the substitution $B = A \setminus \{c\}$:
\begin{align*}
    1+\sum\limits_{B \subseteq X} (-1)^{|B|} (|B|-1) &= 1+ \sum\limits_{B\subseteq X} (-1)^{|B|} |B| + \sum\limits_{B \subseteq X} (-1)^{|B|-1}, \\
    &= 1 - \ind_{|X| = 1} - \ind_{|X| = 0}, \\
    &= \ind_{|X| \geq 2}.
\end{align*}
where $\ind_{|X| = 1}$, $\ind_{|X| = 0}$, and $\ind_{|X| \geq 2}$ represent the indicator functions for their respective events.
\end{proof}

\begin{proposition}
\label{proposition:edgesForcedOut}
Let $S$ be a star graph with center node $c$ and let $L$ be a subset of the leaves in $S$. 
Let $\cg$ be a DAG with skeleton $S$ and let $X = \pa_{\cg}(c)$. 
We then have
\[
\ind_{c \to L}( c_\cg ) = \begin{cases} 1 &\mbox{if}~ |X| \geq 2 ~\mbox{and}~ X \cap L = \emptyset \\ 0 &\mbox{otherwise.}\end{cases}.
\]
\end{proposition}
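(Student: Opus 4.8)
The plan is to reduce the computation to the case $L = \emptyset$ already settled in Proposition~\ref{proposition:vstructureIndicator}. The starting point is the same combinatorial description of $c_\cg$ on a star used there: since $|A| \geq 3$ forces $|A \cup B| \geq 3$, the coordinate $c_\cg(A \cup B)$ equals $1$ precisely when $c \in A \cup B$ and $(A \cup B) \setminus \{c\} \subseteq X$, and equals $0$ otherwise. Every $A$ in the outer sum already contains $c$, and because $A \subseteq V(S) \setminus L$ is disjoint from $B \subseteq L$, this survival condition splits cleanly as $A \setminus \{c\} \subseteq X \setminus L$ together with $B \subseteq X \cap L$.

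First I would substitute this description into the defining double sum and discard every term for which $c_\cg(A \cup B) = 0$. Writing $Y = X \setminus L$ and $Z = X \cap L$, and using $|A \cup B| = |A| + |B|$ (valid by disjointness of $A$ and $B$), the surviving terms factor as a product of two independent sums:
\[
\ind_{c \to L}(c_\cg) = \left( \sum_{\substack{c \in A \subseteq Y \cup \{c\} \\ |A| \geq 3}} (-1)^{|A| - 1}(|A| - 2) \right) \left( \sum_{B \subseteq Z} (-1)^{|B|} \right).
\]

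Next I would evaluate each factor separately. The second factor is the standard alternating sum $\sum_{B \subseteq Z}(-1)^{|B|} = 0^{|Z|}$, which equals $1$ when $Z = X \cap L = \emptyset$ and $0$ otherwise; this immediately forces $\ind_{c \to L}(c_\cg) = 0$ whenever $X \cap L \neq \emptyset$. When $X \cap L = \emptyset$ we have $Y = X$, and the first factor is exactly the alternating sum evaluated in the proof of Proposition~\ref{proposition:vstructureIndicator}, shown there to equal $\ind_{|X| \geq 2}$. Combining the two factors yields $\ind_{c \to L}(c_\cg) = \ind_{X \cap L = \emptyset}\cdot \ind_{|X| \geq 2}$, which is precisely the claimed case distinction.

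The alternating-sum evaluations are routine and may simply be cited from Proposition~\ref{proposition:vstructureIndicator}. The one step requiring genuine care is the factorization: I must verify that the constraint $A \subseteq V(S) \setminus L$ built into the definition of $\ind_{c \to L}$, combined with the survival condition $(A \cup B) \setminus \{c\} \subseteq X$, truly decouples the $A$-summation (over subsets of $X \setminus L$) from the $B$-summation (over subsets of $X \cap L$). This decoupling is what licenses the appeal to Proposition~\ref{proposition:vstructureIndicator} and is the crux of the argument.
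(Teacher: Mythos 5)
Your proof is correct and takes essentially the same approach as the paper's: both substitute the combinatorial description of $c_\cg$ on a star into the defining double sum, factor it (using disjointness of $A$ and $B$) into an $A$-sum over subsets of $(X\setminus L)\cup\{c\}$ times a $B$-sum over subsets of $X\cap L$, and evaluate the factors via Proposition~\ref{proposition:vstructureIndicator} and the standard alternating-sum identity. The only immaterial difference is that you specialize the first factor to the case $X\cap L=\emptyset$, whereas the paper writes it in general as $\ind_{|X\setminus L|\geq 2}$ before simplifying.
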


\begin{proof}
For each $A \subseteq V(S) \setminus L$ that contains $c$ and has $|A| \geq 3$ and for each $B \subseteq L$, we have $c_\cg(A \cup B) = 1$ if and only if $A \cup B \subseteq X \cup \{c\}$. This fact coupled with the fact that $A$ and $B$ are disjoint implies that $\ind_{c \to L}^S(c_\cg)$ may be rewritten as
\begin{align*}
    \ind_{c \to L}(c_\cg) &= \sum\limits_{\substack{c \in A \subseteq (X \cup \{c\}) \setminus L \\ |A| \geq 3}} \sum\limits_{B \subseteq X \cap L} (-1)^{|A \cup B| - 1} \left( |A| -2 \right), \\
    &= \left( \sum\limits_{\substack{c \in A \subseteq (X \cup \{c\}) \setminus L \\ |A| \geq 3}}  (-1)^{|A|-1}(|A|-2) \right) \cdot \left(\sum\limits_{B \subseteq X \cap L} (-1)^{|B|} \right).
\end{align*}
If $S'$ is the subgraph of $S$ on $X \cup \{c\} \setminus L$, then Proposition~\ref{proposition:vstructureIndicator} tells us that
\[\ind_{c \to L}^S(c_\cg) = \ind_{c \to \emptyset}^{S'}(c_{S'}) \cdot \ind_{X \cap L = \emptyset} = \ind_{|X| \setminus L \geq 2} \cdot \ind_{X \cap L = \emptyset} = \ind_{|X| \geq 2} \cdot \ind_{X \cap L = \emptyset}.\]
\end{proof}

\begin{definition}
\label{def:starhash}
Let $S$ be a star with center $c$, and define
\[\#^S(x) = \sum\limits_{\substack{c \in A \subseteq V(S) \\ |A| \geq 3}} (-1)^{|A|-1} x_A.\] 
\end{definition}

\begin{proposition}
\label{prop:numPointCenterward}
Let $S$ be a star graph with center $c$ and let $\cg$ be a DAG with skeleton $S$. Let $X = \pa_{\cg}(c)$. Then
\[\#^S\left( c_\cg \right) = \begin{cases}
    |X|-1 &\mbox{if}~|X| \geq 1 \\ 0 &\mbox{otherwise.} \end{cases}\]
\end{proposition}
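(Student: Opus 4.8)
The plan is to evaluate $\#^S(c_\cg)$ by the same two-move strategy used in the proof of Proposition~\ref{proposition:vstructureIndicator}: first restrict the defining sum to the sets $A$ on which $c_\cg$ is nonzero, then collapse the resulting alternating sum over subsets of $X$ using the binomial identity $\sum_{B \subseteq X}(-1)^{|B|} = \ind_{X = \emptyset}$. Since the summand here carries no $(|A|-2)$ weight, the computation is strictly simpler than that proposition.

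First I would apply Theorem~\ref{thm:imsetinterp} to determine which terms of $\#^S$ survive. Because $S$ is a star with center $c$, every leaf has degree one in the skeleton, so the only node of $\cg$ admitting two or more parents is $c$. Hence for a set $A$ with $c \in A$ and $|A| \geq 3$, the collider condition of Theorem~\ref{thm:imsetinterp} can only be satisfied by taking $i = c$: if $i$ were a leaf $\ell$, then $A \setminus \ell \subseteq \pa_\cg(\ell) \subseteq \{c\}$ would force $|A| \leq 2$. Taking $i = c$ gives $A \setminus \{c\} \subseteq \pa_\cg(c) = X$, so within the range of the sum we have $c_\cg(A) = 1$ if and only if $A \subseteq X \cup \{c\}$. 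Restricting accordingly yields
\[
\#^S(c_\cg) = \sum_{\substack{c \in A \subseteq X \cup \{c\} \\ |A| \geq 3}} (-1)^{|A|-1}.
\]

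Next I would reindex by the substitution $B = A \setminus \{c\}$, so that $B \subseteq X$, $|B| = |A| - 1$, and $(-1)^{|A|-1} = (-1)^{|B|}$, while the constraint $|A| \geq 3$ becomes $|B| \geq 2$. This gives $\#^S(c_\cg) = \sum_{B \subseteq X,\ |B| \geq 2} (-1)^{|B|}$. Finally I would add and subtract the $|B| \in \{0,1\}$ terms and invoke $\sum_{B \subseteq X}(-1)^{|B|} = \ind_{X = \emptyset}$; the restored low-order terms contribute $1$ (from $B = \emptyset$) and $-|X|$ (from the $|X|$ singletons), so the total is $\ind_{X = \emptyset} - 1 + |X|$. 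This equals $0$ when $X = \emptyset$ and $|X| - 1$ when $|X| \geq 1$, which is exactly the claimed formula.

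There is no genuine obstacle here, as the argument is a routine alternating-sum evaluation. The only step that uses the star hypothesis is the initial reduction forcing $i = c$ in Theorem~\ref{thm:imsetinterp}; this is precisely what fails for a general skeleton, where a non-leaf node could also serve as the collider, and is the reason the statement is specific to stars.
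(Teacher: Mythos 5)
Your proof is correct and follows essentially the same route as the paper's: restrict the defining sum to sets $A$ with $c \in A \subseteq X \cup \{c\}$ via the combinatorial interpretation of the imset, then evaluate the resulting truncated alternating sum by completing it to the full sum over subsets and accounting for the low-cardinality terms. The only differences are cosmetic: you justify the support restriction in more detail via Theorem~\ref{thm:imsetinterp} (the paper asserts it directly), and you handle the cases $X = \emptyset$ and $X \neq \emptyset$ uniformly with the indicator identity rather than splitting into cases.
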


\begin{proof}
We know that $c_\cg(A) = 1$ if and only if $c\in A \subseteq X \cup \{c\}$. Then $\#^S(c_\cg)$ simplifies to
\[\sum\limits_{\substack{c \in A \subseteq X \cup \{c\} \\ |A| \geq 3}} (-1)^{|A|-1}.\]
If $X = \emptyset$, then the sum is empty and $\#^S(c_\cg) = 0$. Otherwise we know that $\sum\limits_{c \in A \subseteq X \cup \{c\}} (-1)^{|A|-1} = 0$, and so we have
\[\#^S(c_\cg) = \sum\limits_{\substack{c \in A \subseteq X \cup \{c\} \\ |A| \leq 2}} (-1)^{|A|} = |X|-1.\]
\end{proof}

\begin{definition}
\label{definition:forkedTreeStar}
Let $S$ be a star graph with center $c$ and let $I$ be a subset of the leaves of $S$. If $S'$ is the substar of $S$ with leaves in $I$, then the \emph{forked-tree inequality} is
\[\ind_{c \to N_{S'}(c)}(x) + \sum\limits_{u \in I} (1 - x_{cuu'}) \leq 1 + \#^{S'}(x).\]
\end{definition}

In Section~\ref{section:eliminatingInterventions} we show that Definition~\ref{definition:forkedTreeStar} generalizes to a family of subtrees which in the case without interventions have forking behavior at leaf nodes.

\begin{example}
Once again consider the star tree with leaf interventions in Figure~\ref{fig:starInterventions}. We again recall that the inequalities defining $\cim_S^I$ are
\begin{align*}
x_{abc} - x_{abcd} \geq 0 \quad\quad&\left(x_{aa' c} \right) + \left( x_{abc} + x_{acd} - x_{abcd} \right) \leq 1 \\
x_{bcd} - x_{abcd} \geq 0 \quad\quad&\left(x_{cdd'} \right) + \left( x_{acd} + x_{bcd} - x_{abcd} \right) \leq 1 \\
x_{acd} - x_{abcd} \geq 0 \quad\quad&\left( 1 - x_{aa'c} \right) + \left(1 - x_{cdd'} \right) \leq 1 + x_{acd}. \\
x_{abcd} \geq 0 \quad\quad
\end{align*}
The four inequalities on the left are all star inequalities while the top two inequalities on the right are the bidirected-edge inequalities associated to the internal edges $a-c$ and $d-c$ respectively. The remaining inequality
\[
\left( 1 - x_{aa'c} \right) + \left(1 - x_{cdd'} \right) \leq 1 + x_{acd}
\]
is the forked-tree inequality associated to this star. Observe that since $I = \{a,d\}$, the substar $S'$ consists of all of $S$ except the node $b$. The first piece of the forked-tree inequality is $\ind_{c \to N_{S'}(c)}$ however we see that  
\[
\ind_{c \to N_{S'}(c)} = \sum\limits_{\substack{c \in A \subseteq \{a,b,c,d\} \setminus \{a,d\} \\ |A| \geq 3}} \sum\limits_{B \subseteq L} (-1)^{|A \cup B| - 1} \left( |A| -2 \right) x_{A \cup B},
\]
and thus no such $A$ in the first sum index can exist. 
Therefore, this sum is 0. 
So the left-hand side of the forked-tree inequality is just
\[
\ind_{c \to N_{S'}(c)} + \sum\limits_{u \in I} (1 - x_{cuu'}) = 0 + \sum\limits_{u \in {a,d}} (1 - x_{cuu'}) = (1-x_{aa'c}) + (1-x_{cdd'}).
\]
From the definition, the right-hand side of the forked-tree inequality associated to this star is
\[
1 + \#^{S'}(x) = 1 + \sum\limits_{\substack{c \in A \subseteq V(S') \\ |A| \geq 3}} (-1)^{|A|-1} x_A = 1 + \sum\limits_{\substack{c \in A \subseteq \{a,c,d\} \\ |A| \geq 3}} (-1)^{|A|-1} x_A = 1 + x_{acd}.
\]
Thus we see that our remaining inequality which defines $\cim_{S}^I$ is indeed the forked-tree inequality associated to this star. 
\end{example}

\begin{lemma}
Let $S$ be a star graph with center $c$ and let $I$ be a subset of the leaves of $S$. Then the forked-tree inequality defines a facet of $\cim_S^I$.
\end{lemma}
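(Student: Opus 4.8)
The plan is to follow the strategy of Lemmas~\ref{lemma:starInequalities} and~\ref{lemma:bidirectededgeinequality}: first establish validity of the forked-tree inequality by evaluating its two sides on an arbitrary vertex $c_{\cg^\ci}$, and then exhibit a \emph{unique} vertex of $\cim_S^I$ that strictly satisfies it, from which facetness follows by a dimension count. Each DAG $\cg$ with skeleton $S$ corresponds to a choice of $X = \pa_\cg(c)$ among the leaves oriented toward the center (every orientation of a star is acyclic, since leaves have degree one), so I would express each summand of the inequality as a function of $X$.

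First I would assemble the values of the three functionals on $c_{\cg^\ci}$. Writing $L = N_{S'}(c)$, which as a set of leaf nodes is exactly $I$, Proposition~\ref{proposition:edgesForcedOut} gives $\ind_{c \to N_{S'}(c)}(c_{\cg^\ci}) = 1$ precisely when $|X| \geq 2$ and $X \cap I = \emptyset$, and $0$ otherwise. For each $u \in I$, applying Theorem~\ref{thm:imsetinterp} to $\{c,u,u'\}$ (using the essential edge $u' \to u$) shows $x_{cuu'} = c_{\cg^\ci}(\{c,u,u'\}) = 1$ exactly when $c \to u$, so $1 - x_{cuu'}$ is the indicator that $u \to c$; summing yields $\sum_{u \in I}(1 - x_{cuu'}) = |X \cap I|$. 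Finally, Proposition~\ref{prop:numPointCenterward} applied to the substar $S'$ (whose relevant coordinates pick out $\pa_\cg(c) \cap V(S') = X \cap I$) gives $\#^{S'}(c_{\cg^\ci}) = |X \cap I| - 1$ when $X \cap I \neq \emptyset$ and $0$ otherwise.

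With these values in hand, validity reduces to a short case analysis on whether $X \cap I$ is empty. If $X \cap I \neq \emptyset$, the forcing indicator vanishes and both sides equal $|X \cap I|$, so equality always holds. If $X \cap I = \emptyset$, the left side is $\ind_{|X| \geq 2}$ and the right side is $1$, so the inequality holds and is tight exactly when $|X| \geq 2$. This establishes validity and shows that the only vertices off the resulting face $\cf$ are those coming from DAGs with $X \cap I = \emptyset$ and $|X| \leq 1$.

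The key remaining point, which I expect to carry the real content, is that all DAGs with $X \cap I = \emptyset$ and $|X| \leq 1$ represent a single $\ci$-MEC, and hence a single vertex of $\cim_S^I$. Indeed, for such a DAG the only v-structures in $\cg^\ci$ are $c \to \ell \leftarrow \ell'$ for $\ell \in I$: there is no v-structure at $c$ because $|X| \leq 1$, and the non-intervened leaves have degree one so cannot be centers. This set of v-structures does not depend on which non-intervened leaf (if any) is oriented toward $c$, and the skeleton $S^I$ is fixed, so Theorem~\ref{thm:Iverma} implies these DAGs are all $\ci$-Markov equivalent and therefore share one characteristic imset. Thus exactly one vertex $v_0$ of $\cim_S^I$ lies strictly off $\cf$. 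Since $v_0$ does not lie on the supporting hyperplane, it lies outside the affine hull of $\cf$, so $\cim_S^I = \conv(\cf \cup \{v_0\})$ has dimension $\dim \cf + 1$; therefore $\dim \cf = \dim \cim_S^I - 1$ and the forked-tree inequality is facet-defining.
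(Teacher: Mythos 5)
Your proof is correct and follows essentially the same route as the paper's: establish validity by evaluating the three functionals on an arbitrary vertex (via Propositions~\ref{proposition:edgesForcedOut} and~\ref{prop:numPointCenterward}) with a case split on whether $X \cap I$ is empty, and then show the face misses exactly one vertex of $\cim_S^I$. In fact you make explicit two steps the paper leaves implicit --- the verification via Theorem~\ref{thm:Iverma} that all DAGs with $X \cap I = \emptyset$ and $|X| \leq 1$ form a single $\ci$-MEC, and the dimension count deducing facetness from the existence of a unique vertex off the face --- so your write-up is, if anything, more complete.
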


\begin{proof}
Let $S'$ be the substar of $S$ with leaves in $I$.
We first rearrange the forked-tree inequality to obtain the following:
\[|I|+ \ind_{c \to I}(x) \leq 1 + \sum\limits_{u \in I} x_{cuu'} +  \#^{S'}(x).\]
Let $\cg^\ci$ be an $\ci$-DAG with set of leaf targets $I$. 
Either there exists $u \in I$ such that $u \to c$ or no such $u$ exists.

\noindent \textbf{Case 1:}
If there exists $u \in I$ such that $u \to c$, then by Proposition~\ref{prop:numPointCenterward} we know $\#^{S'}(c_{\cg^\ci}) = |\{u \in V(S'): u \to c\}| = |\{u \in I: u \to c\}|$. 
We have
\[
\sum\limits_{u \in I} x_{cuu'} + \#^{S'}(c_{\cg^\ci}) = |\{u \in I: c \to u\} + |\{u \in I: u \to c\}| - 1 = |I| - 1.
\]
The forked-tree inequality then simplifies to
\[
\ind_{c \to I}(c_{\cg^\ci}) \leq 0.
\]
But $\ind_{c \to I}(c_{\cg^\ci}) = 0$ by Proposition~\ref{proposition:edgesForcedOut} since an element of $I$ is in the parents of $c$, and so equality holds on the forked-tree inequality and the forked-tree inequality is valid on $c_{\cg^\ci}$.

\noindent \textbf{Case 2:} Otherwise every $u \in I$ has $u \from c$, so $\sum\limits_{u \in I} 1-x_{cuu'} = 0$ and also $\#^{S'}(c_{\cg^\ci}) = 0$ (by Proposition~\ref{prop:numPointCenterward}). The forked-tree inequality is equivalent to
$
\ind_{c \to I}(c_{\cg^\ci}) \leq 1
$
which is valid since $\ind_{c \to I}(x)$ is an indicator function. 
However, the only way that this inequality can be strict is if there is no v-structure among the leaves of $S$ not in $I$. 

Combining these two cases, we see that the forked-tree inequality is valid on $\cim_S^I$ since every vertex satisfies the forked-tree inequality. 
Moreover, there exists a unique $\ci$-Markov equivalence class which does not achieve equality, and so the forked-tree inequality defines a facet of $\cim_S^I$.
\end{proof}

\begin{theorem}
\label{thm:FacetsOfStarTrees}
Let $S$ be a star graph and let $I$ be a subset of the leaves of $S$. Star inequalities, bidirected-edge inequalities, and the forked-tree inequality form a minimal H-representation for $\cim_S^I$. Moreover, $\cim_S^I$ is a simplex.
\end{theorem}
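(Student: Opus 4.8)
The plan is to prove both assertions at once by exhibiting a bijection between the listed inequalities and the vertices of $\cim_S^I$ under which each inequality is satisfied with equality by every vertex except its partner; this simultaneously forces the vertices to be affinely independent (so the polytope is a simplex) and forces the inequalities to exhaust the facets (so they are the minimal H-representation). Write $c$ for the center of $S$, let $n$ be its number of leaves, and recall that every $\ci$-DAG $\cg^\ci$ with skeleton $S$ is determined, up to the essential intervention edges, by its parent set $X = \pa_\cg(c)$, a subset of the leaves.

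First I would enumerate the vertices. By Theorem~\ref{thm:Iverma} two such $\ci$-DAGs are $\ci$-Markov equivalent exactly when they have the same v-structures, and in a star every v-structure occurs either at $c$ (one for each pair of parents in $X$) or at an intervened leaf $\ell \in I$ that is a child of $c$ (the v-structure $c \to \ell \from \ell'$). A short case analysis on $X$ partitions the $\ci$-MECs into three families: those with $|X|\geq 2$ (each $X$ a distinct class, as the pairs in $X$ recover $X$); those with $X=\{\ell\}$ for $\ell\in I$ (each a distinct class, with v-structure signature $I\setminus\{\ell\}$ at the leaves); and those with $|X|\leq 1$ and $X\cap I=\emptyset$, which all collapse to a single class since they share the signature of no v-structure at $c$ and a v-structure at every intervened leaf. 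As distinct signatures yield distinct characteristic imsets, this produces exactly $(2^n-n-1)+|I|+1 = 2^n-n+|I|$ vertices.

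Next I would count the listed inequalities and match them to these vertices. Every star subgraph of $S$ is a substar on $c$ with a leaf set $L$ of size at least two, giving $2^n-n-1$ star inequalities; the internal edges of $S^I$ are exactly the edges $c-\ell$ with $\ell\in I$, giving $|I|$ bidirected-edge inequalities; and there is one forked-tree inequality. The total is again $2^n-n+|I|$. The key input is that the proofs of Lemma~\ref{lemma:starInequalities}, Lemma~\ref{lemma:bidirectededgeinequality}, and the forked-tree lemma each single out the unique vertex off the face defined: the star inequality for $L$ is strict exactly on the class $X=L$, the bidirected-edge inequality for $c-\ell$ is strict exactly on the class $X=\{\ell\}$, and the forked-tree inequality is strict exactly on the collapsed class. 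These excluded classes sweep out the three families above without repetition, so the assignment is a bijection in which each inequality is valid and satisfied with equality by every vertex except exactly one.

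Finally I would run the standard affine-independence argument. Set $V := 2^n-n+|I|$, write each inequality as a nonnegative affine slack functional $h_i(x)\geq 0$, and let $v_0,\dots,v_{V-1}$ be the imsets; the bijection gives $h_i(v_j)=0$ for $j\neq i$ and $h_i(v_i)>0$. Applying $h_i$ to any affine dependence $\sum_j\lambda_j v_j = 0$ with $\sum_j\lambda_j=0$ yields $\lambda_i h_i(v_i)=0$, so $\lambda_i=0$ for every $i$; hence the $v_j$ are affinely independent and $\cim_S^I$ is a simplex of dimension $V-1$. Such a simplex has exactly $V$ facets, and since distinct excluded vertices give distinct facets, the $V$ facet-defining inequalities we listed are all of them, so they form the minimal H-representation. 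I expect the main obstacle to be the vertex enumeration, specifically correctly identifying the single collapsed class with $|X|\leq 1$ and $X\cap I=\emptyset$, since a miscount there would break the bijection on which the entire simplex argument depends.
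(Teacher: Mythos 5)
Your proof is correct, and its core is the same as the paper's: both rely on the preceding lemmas (Lemma~\ref{lemma:starInequalities}, Lemma~\ref{lemma:bidirectededgeinequality}, and the forked-tree lemma) establishing that each listed inequality is valid with exactly one vertex off its face, and both count $2^n-n+|I|$ inequalities against $2^n-n+|I|$ interventional MECs. The differences are in the bookkeeping and in how the conclusion is drawn. The paper counts MECs by splitting on whether some intervened leaf points toward $c$ (classes with $X\cap I=\emptyset$ biject with MECs of the substar on the non-intervened leaves, giving $2^{n-k}-(n-k)$; classes with $X\cap I\neq\emptyset$ contribute $(2^k-1)2^{n-k}$), whereas you partition the classes by which inequality excludes them ($|X|\geq 2$; $X=\{\ell\}$, $\ell\in I$; the collapsed class), which has the advantage of making the inequality--vertex bijection explicit rather than only matching totals. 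More substantively, the paper's written proof stops at ``number of facets equals number of vertices,'' which on its own does not force a simplex (a square has four vertices and four facets); the missing ingredient is precisely what you supply: each slack functional vanishes on all vertices but one, with distinct excluded vertices, and applying these functionals to a putative affine dependence kills every coefficient, so the vertices are affinely independent and $\cim_S^I$ is a $(V-1)$-simplex whose $V$ facets are exactly the listed inequalities. In short, your write-up is a more airtight version of the same argument, closing a logical step the paper leaves implicit.
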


\begin{proof}
We show that the number of facets and the number of vertices of $\cim_S^I$ are the same. Let $n$ be the number of leaves of $S$ and let $k = |I|$. Each star inequality is associated to a subset of the leaves of $S$ of size at least 2. Hence there are $2^n -n -1$ star inequalities. Bidirected-edge inequalities are associated to elements of $I$, and so there are $k$ such inequalities. There is exactly 1 forked-tree inequality, and so we obtain $2^n - n + k$ facets in total. These inequalities are all distinct since no two functionals have the same support.

We now count the $\ci$-Markov equivalence classes. 
Given an $\ci$-DAG $\cg^\ci$ with set of targets $I$, if every $u \in I$ has $u \from c$, then the number of Markov equivalence class of this type is the same as the number of non-interventional Markov equivalence classes on the substar of $S$ obtained by deleting the leaves of $I$. This yields $2^{n-k} - n +k$ Markov equivalence classes.

Otherwise we choose some nonempty subset of $I$ to point toward $c$. This choice can be made in $2^k - 1$ ways. Regardless of choice, every leaf $v \not\in I$ has an orientation $c \to v$ or $c \from v$ of the edge $c -v$ which is essential, and so we have $2^{n-k}$ ways to choose such orientations. This yields $2^{n-k}(2^k - 1) = 2^n - 2^{n-k}$ Markov equivalence classes of this type.

Summing the two cases together, we see that there are $2^n - n +k$ interventional Markov equivalence classes, which we previously saw was also the number of facets we have computed.
\end{proof}

\section{Interventions on Trees}
\label{section:interventionsontrees}
In this section we compute an H-representation for a polytope we denote $\cim_T^{I,J}$ obtained by gluing star trees together at a node targeted by an intervention. The main tool used in this section is \cite[Lemma 3.2]{dinu2020gorenstein}, a result regarding the H-representation of the toric fiber product, which we review. In Section~\ref{section:eliminatingInterventions} we show that $\cim_T^{I,J}$ and some $\cim_X^I$ are related by coordinate projection and use this to compute an H-representation for $\cim_X^I$. We begin with a special collection of trees and an associated polytope.

\begin{definition}
\label{defn:GluingTree}
A \emph{gluing tree} $(T,I,J)$ is a tree $T$ with vertices colored black and white such that every white node has degree at most two and and no two white nodes adjacent. We partition the white nodes into $I \sqcup J$ where $I$ is the set of leaf nodes and $J$ is the set of degree two nodes.
\end{definition}

\begin{definition}
An edge $ij$ is \emph{monochromatic} if $i,j\in I$ or $i,j\in J$.
A gluing tree $(T,I,J)$ is \emph{separated} if the only monochromatic edges are leaves.
\end{definition}

Note that every star graph with black center vertex is a separated gluing tree. 
Furthermore, any separated gluing tree can be obtained via iteratively gluing black centered stars together at white nodes.
The goal of this section is to compute an H-representation of the polytope $\CIM_T^{I,J}$, which we now define, in the separated case.

\begin{definition}
Let $(T,I,J)$ be a gluing tree. We define $\cim_T^{I,J}$ to be
\[\conv\left( c_\ct \in \cim_T^{I \cup J}:~\mbox{for all}~j \in J~\mbox{with}~N_T(j) = \{i,k\}~\mbox{we have}~i \to j \to k~\mbox{or}~ i \from j \from k ~\mbox{in}~\ct\right).\]
\end{definition}

\begin{example}
\label{example:facets}
Consider the undirected tree $T$ in Figure~\ref{fig:facetsExample} with intervention at node 8. The star inequalities associated to the star induced on $N_T[3]$, $N_T[4]$, and $N_T[5]$ are facet-defining. These inequalities are:
\begin{alignat*}{3}
    x_{123} - x_{1234} &\geq 0 \quad\quad  & x_{345} - x_{3458} &\geq 0 \quad\quad & x_{456} - x_{4567} &\geq 0 \\
    x_{134} - x_{1234} &\geq 0 \quad\quad  & x_{348} - x_{3458} &\geq 0 \quad\quad & x_{457} - x_{4567} &\geq 0 \\
    x_{234} - x_{1234} &\geq 0 \quad\quad  & x_{458} - x_{3458} &\geq 0 \quad\quad & x_{567} - x_{4567} &\geq 0 \\
              x_{1234} &\geq 0 \quad\quad  &           x_{3458} &\geq 0 \quad\quad &           x_{4567} &\geq 0.
\end{alignat*}
We have bidirected-edge inequalities associated to each of the internal edges of $T^I$ given by 
\begin{align*}
    (x_{134} + x_{234} - x_{1234}) + (x_{345} + x_{348} - x_{3458}) \leq 1 \\
    (x_{345} + x_{458} - x_{3458}) + (x_{456} + x_{457} - x_{4567}) \leq 1 \\
    (x_{488'}) + (x_{348} + x_{458} - x_{3458}) \leq 1 
\end{align*}
The remaining inequalities are all generalizations of the forked-tree inequality in Definition~\ref{definition:forkedTreeStar} corresponding to subtrees induced on $\{3\}, \{5\}, \{4,8\}$, and $\{3,4,5,8\}$ respectively:
\begin{align*}
    x_{123} + x_{134} + x_{234} - 2x_{1234} \leq 1 \\
    x_{456} + x_{457} + x_{567} - 2x_{4567} \leq 1 \\
    x_{345} + (1-x_{488'}) \leq 1 \\
    (x_{123} + x_{134} + x_{234} - 2x_{1234}) + (x_{456} + x_{457} + x_{567} - 2x_{4567}) + (1-x_{488'}) \\ \leq 1 
    + (x_{345} + x_{348} + x_{458} - x_{3458}).
\end{align*}

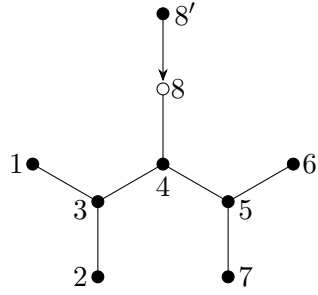
\begin{figure}
    \centering
\begin{tikzpicture}
\begin{scope}
\draw [rotate=120] (0,0)--(0,1);
\draw [rotate=120] (0,1)--(0.866,1.5);
\draw [rotate=120] (0,1)--(-0.866,1.5);

\node[wB] at (-1.732,0) {};
\node[wB] at (-0.866,-0.5) {};
\node[wB] at (-0.866,-1.5) {};

\node at (-1.95,0) {1};
\node at (-1.1,-0.6) {3};
\node at (-1.1,-1.5) {2};
\end{scope}

\begin{scope}
\draw [rotate=240] (0,0)--(0,1);
\draw [rotate=240] (0,1)--(0.866,1.5);
\draw [rotate=240] (0,1)--(-0.866,1.5);

\node[wB] at (1.732,0) {};
\node[wB] at (0.866,-0.5) {};
\node[wB] at (0.866,-1.5) {};

\node at (1.1,-0.6) {5};
\node at (1.95,0) {6};
\node at (1.1,-1.5) {7};
\end{scope}

\draw (0,0)--(0,1);
\draw [-Stealth] (0,2)--(0,1.1);

\node[wB] at (0,0) {};
\node[wW] at (0,1) {};
\node[wB] at (0,2) {};

\node at (0,-0.3) {4};
\node at (0.2,1) {8};
\node at (0.3,2) {$8'$};
\end{tikzpicture}
    \caption{An undirected tree $T$ with an intervention at 8.}
    \label{fig:facetsExample}
\end{figure}
\end{example}

By construction $\cim_T^{I,J} 
\subseteq \cim_T^{I \cup J}$. Moreover, $\dim(\cim_T^{I,J}) < \dim(\cim_T^{I \cup J})$ because for all $x \in \cim_T^{I,J}$ we have $x_{ijk} = x_{ijj'k} = 0$ and $x_{ijj'} + x_{jj'k} = 1$. 
We may consequently ignore the $ijk-$ and $ijj'k$-coordinates and as an abuse of notation, equate polytopes whose points differ only by omission of $x_{ijk}$ and $x_{ijj'k}$. This allows us to build $\cim_T^{I,J}$ via a gluing.

\begin{definition}
\label{defn:interventionalParting}
Let $(T, I, J)$ be a gluing tree. Let $j \in J$ with $N_T(j) = \{i,k\}$. The \emph{(undirected) interventional parting} of $T$ at $j$ is the pair of gluing trees $\left(T_j^i, (I \cup \{j\}) \cap V(T_j^i) 
, (J \setminus \{j\}) \cap V(T_j^i)\right)$ and $\left(T_j^k, (I \cup \{j\}) \cap V(T_j^k) 
, (J \setminus \{j\}) \cap V(T_j^k)\right)$ where
\begin{enumerate}
\item $T_j^i$ the connected component of $T\setminus{j-k}$ which contains $i$,
\item $T_j^k$ be the connected component of $T\setminus{i-j}$ which contains $k$.    
\end{enumerate}
For notational convenience we denote the gluing trees in the undirected parting by $(T^i, I^i, J^i)$ and $(T^k, I^k, J^k)$. Let $\ct$ be a DAG with skeleton $T$. The \emph{directed interventional parting of $\ct$} at $j$ is the pair of directed subgraphs induced on $T^i$ and $T^k$. 
This construction is pictured in Figure \ref{fig:interventionalParting}.
\end{definition}

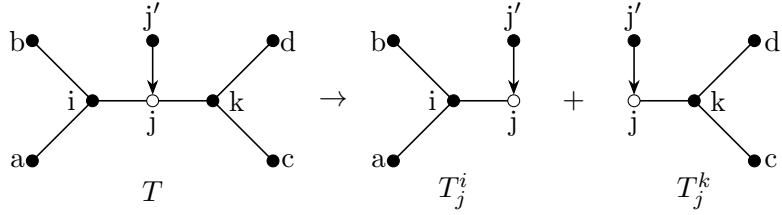
\begin{figure}
\centering
\begin{tikzpicture}[scale = .8]

\begin{scope}[shift={(0,0)}]
\draw [semithick] (0,0)--(0.94,0.94);
\draw [semithick] (0,2)--(1,1);
\draw [semithick] (2,1)--(1.08,1);
\draw [semithick] (3,1)--(2,1);
\draw [semithick] (3,1)--(4,0);
\draw [semithick] (3,1)--(4,2);
\draw [-Stealth, semithick] (2,2)--(2,1.1
);

\node at (-0.25,0) {a};
\node at (-0.25,2) {b};
\node at (0.65,1) {i};
\node at (3.4,1) {k};
\node at (4.25,0) {c};
\node at (4.25,2) {d};
\node[below] at (2,1) {j};
\node[above] at (2,2) {j$'$};

\node[wB] at (0,0) {};
\node[wB] at (0,2) {};
\node[wB] at (1,1) {};
\node[wW] at (2,1) {};
\node[wB] at (3,1) {};
\node[wB] at (4,0) {};
\node[wB] at (4,2) {};
\node[wB] at (2,2) {};

\node at (2, -.5) {$T$};
\end{scope}

\node at (5,1) {$\to$};

\begin{scope}[shift={(6,0)}]
\draw [semithick] (0,0)--(0.94,0.94);
\draw [semithick] (0,2)--(1,1);
\draw [semithick] (2,1)--(1.08,1);
\draw [semithick] (4,1)--(5,1);
\draw [semithick] (5,1)--(6,0);
\draw [semithick] (5,1)--(6,2);
\draw [-Stealth, semithick] (2,2)--(2,1.1);
\draw [-Stealth, semithick] (4,2)--(4,1.1);

\node at (-0.25,0) {a};
\node at (-0.25,2) {b};
\node at (0.65,1) {i};
\node[below] at (2,1) {j};
\node[above] at (2,2) {j$'$};

\node[below] at (4,1) {j};
\node[right] at (6,0) {c};
\node[right] at (6,2) {d};
\node[above] at (4,2) {j$'$};
\node [right] at (5.1,1) {k};

\node[wB] at (0,0) {};
\node[wB] at (0,2) {};
\node[wB] at (1,1) {};
\node[wW] at (2,1) {};
\node[wB] at (2,2) {};

\node[wB] at (5,1) {};
\node[wW] at (4,1) {};
\node[wB] at (6,0) {};
\node[wB] at (6,2) {};
\node[wB] at (4,2) {};

\node at (1, -.5) {$T_j^i$};
\node at (3, 1) {$+$};
\node at (5, -.5) {$T_j^k$};
\end{scope}
\end{tikzpicture}

\caption{A separated gluing tree $T$ (with interventions added in) is pictured on the left and the interventional parting at $j$ is on the right. Each nonzero coordinate of $\cim_T^{I,J}$ appears in exactly one of the two trees of the interventional parting.} 
\label{fig:interventionalParting}
\end{figure}

Note that the parting of $(T,I,J)$ at $j$ is a pair of gluing trees. Indeed, splitting $(T,I,J)$ into $(T^i,I^i,J^i)$ and $(T^k,I^k,J^k)$ yields two graphs in which the degree of each white node is preserved except at $j$, where the degree decreases by one. This moves $j$ from the set $J$ to the set $I$.

For example, consider the gluing tree $T$ and its associated parting at $j$  pictured in Figure \ref{fig:interventionalParting}. In the language of Definition \ref{defn:GluingTree} which is $(T, \emptyset, \{ j\})$. The resulting interventional parting of $T$ at $j$ yields the pair of gluing trees $(T_j^i, \{j\}, \emptyset)$ and $(T_j^k, \{j\}, \emptyset)$ which are also gluing trees. Observe that in the original gluing tree $T$, we had that 
$J = \{j\}$ however in both of the resulting gluing trees we have that $I = \{j\}$ and $J = \emptyset$. In other words, the interventional node $j$ was internal in the gluing tree $T$ but has become a leaf in the gluing trees $T_j^i$ and $T_j^k$. We now examine the facets of the gluing tree $T$ in comparison to those of $T_j^i$ and $T_k^i$.

\begin{example}
\label{ex:TFPofTreesIneqs}
Let $T$ be the gluing tree pictured in Figure \ref{fig:interventionalParting} and $T_j^i, T_j^k$ be the interventional parting at node $j$. Since both of these trees are interventional star trees, we know by Theorem \ref{thm:FacetsOfStarTrees} that the facets of their associated interventional CIM polytope are

\begin{tabular}{c| c c}
&                       $\cim_{T_j^i}^{\{j\}}$                  &           $\cim_{T_j^k}^{\{j\}}$  \\
\hline \\
                & $x_{abi} - x_{abij} \geq 0$  &  $x_{cdk} - x_{cdjk} \geq 0$\\
star ineqs.     & $x_{aij} - x_{abij} \geq 0$  &  $x_{cjk} - x_{cdjk} \geq 0$\\ 
                & $x_{bij} - x_{abij} \geq 0$  &  $x_{djk} - x_{cdjk} \geq 0$\\ 
                & $x_{abij}           \geq 0$  &  $          x_{cdjk} \geq 0$\\ \\ 
\hline
\\
bidirected-edge ineqs.& $(x_{ijj'}) + (x_{aij} + x_{bij} - x_{abij})$$  \leq 1$  &  $(x_{jj'k}) + (x_{cjk} + x_{djk} - x_{cdjk}) \leq 1$\\ \\

\hline

\\
forked-tree ineqs. & $(x_{abi} - x_{abij}) + (1-x_{ijj'}) \leq 1$   & $(x_{cdk} - x_{cdjk}) + (1-x_{jj'k}) \leq 1$\\ \\ 
\end{tabular}

Now observe that the only non-constant coordinates of the following three polytopes are
\begin{align*}
&\cim_{T_j^i}^{\{j\}}: x_{abi}, x_{aij}, x_{bij}, x_{ijj'}, x_{abij},\\ 
&\cim_{T_j^k}^{\{j\}}: x_{cdk}, x_{cjk}, x_{djk}, x_{jj'k}, x_{cdjk}, \\ 
&\cim_{T}^{\emptyset, \{j\}}: x_{abi}, x_{aij}, x_{bij}, x_{ijj'}, x_{abij}, x_{cdk}, x_{cjk}, x_{djk}, x_{jj'k}, x_{cdjk}
\end{align*}
because the coordinate $x_{ijj'k} = 0$ for all $x \in \cim_{T}^{\emptyset, \{j\}}$ by construction. Furthermore, we can naturally consider $\cim_{T}^{\emptyset, \{j\}}$ as a subset of the product polytope $\cim_{T_j^i}^{\{j\}} \times \cim_{T_j^k}^{\{j\}}$ since for any interventional DAG $\ct$ which corresponds to a vertex of $\cim_{T}^{\emptyset, \{j\}}$, it holds that
\[
c_\ct = (c_{\ct_j^i}, c_{\ct_j^k})
\]
where $\ct_j^i, \ct_j^k$ is the directed interventional parting of $\ct$ at $j$. Direct computation of the facets of $T$ shows that $\cim_{T}^{\emptyset, \{j\}} = \cim_{T_j^i}^{\{j\}} \times \cim_{T_j^k}^{\{j\}} \cap \{x \in  \rr^{10}  ~|~ x_{ijj'} + x_{jjk'} = 1\}$. That is, $\cim_{T}^{\emptyset, \{j\}}$ can be obtained by slicing the product polytope with the hyperplane $x_{ijj'} + x_{jjk'} = 1$. 
\end{example}

The phenomenon that we observed in the previous example occurs more generally. We prove this using the toric fiber product, which we now review. 

\begin{definition}
\label{defn:TFP}
Let $P_1$ and $P_2$ be lattice polytopes and $\pi_1: P_1 \to Q$ and $\pi_2: P_2 \to Q$ be integral projections. Then the \emph{toric fiber product} of these polytopes with respect to $\pi_1$ and $\pi_2$ is
\[
P_1 \times_Q P_2 = \conv \left(\{ (v_i, w_j) \in V(P_1) \times V(P_2) ~|~ \pi_1(v_i) = \pi_2(w_j) \}\right)
\]
\end{definition}

The toric fiber product was first introduced in \cite{sullivant2007toric}, and it has been frequently used to study families of toric ideals and polytopes which arise in algebraic statistics \cite{dinu2020gorenstein, engstrom2014multigraded,johnson2023codegree, rauh2016lifting}. A generalization of this operation, called a \emph{quasi-independence gluing} (QIG), was developed in \cite{hollering2022toric} and used to characterize the toric ideal associated to the polytope $\cim_{T}$ which we now study.  
We now show that our polytope $\cim_{T}^{I,J}$ can be built by taking toric fiber products of interventional CIM polytopes of smaller trees as we saw in the previous example. 

\begin{lemma}
Let $(T,I,J)$ be a gluing tree, let $j \in J$, and let $N_T(j) = \{i,k\}$. Define linear maps $\pi_i:\cim_{T^i}^{I^i,J^i} \to [0,1]$ and $\pi_k:\cim_{T^k}^{I^k,J^k} \to [0,1]$ by
\[\pi_i(x) = x_{ijj'} \quad~\mbox{and}~\quad \pi_k(x) = 1 - x_{jj'k}.\]
Then
\[\cim_T^{I,J} = \cim_{T^i}^{I^i,J^i} \times_{[0,1]} \cim_{T^k}^{I^k,J^k}.\]
\end{lemma}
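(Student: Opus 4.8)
The plan is to prove the identity at the level of vertices and then pass to convex hulls, using the directed interventional parting of Definition~\ref{defn:interventionalParting} at $j$ as the bijection between the vertex set of $\cim_T^{I,J}$ and the generating set of the toric fiber product.

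First I would record the coordinate decomposition that underlies the embedding into the product polytope. By Theorem~\ref{thm:imsetinterp}, for any $\ci$-DAG $\ct$ with skeleton $T$ one has $c_\ct(A) = 1$ only if $A \subseteq \fa_\ct(m)$ for some sink $m \in A$; since parents in a tree are neighbors, $\fa_\ct(m)$ is contained in the closed neighborhood of $m$ in the interventional tree $T^{I \cup J}$. Each such closed neighborhood lies entirely in $T^i$ or entirely in $T^k$, the only exception being $N[j] = \{i,j,k,j'\}$. If $A \supseteq \{i,j,k\}$ then the only possible sink is $m=j$, which would force $i \to j \from k$; this is forbidden by the defining gluing condition of $\cim_T^{I,J}$, so $c_\ct(A) = 0$ for every vertex $\ct$, and the coordinates $x_{ijk}$ and $x_{ijj'k}$ vanish identically. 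After discarding these, every remaining nonconstant coordinate of $\cim_T^{I,J}$ is indexed by a subset of a closed neighborhood in $T^i$ (the $i$-side) or in $T^k$ (the $k$-side). The two index sets are disjoint, since $T^i \cap T^k = \{j,j'\}$ forces any shared index to lie in $\{j,j'\}$, and $c_\ct(\{j,j'\}) = 1$ always (the edge $j' \to j$ is essential), so $\{j,j'\}$ is constant and dropped. This identifies the ambient space of $\cim_T^{I,J}$ with the product of the ambient spaces of $\cim_{T^i}^{I^i,J^i}$ and $\cim_{T^k}^{I^k,J^k}$, exactly as in Example~\ref{ex:TFPofTreesIneqs}.

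Next I would verify the vertex bijection. Let $\ct$ satisfy the gluing condition, with directed parting $(\ct^i, \ct^k)$ at $j$. For $v \neq j$ lying in $T^i$ all $T$-neighbors of $v$ remain in $T^i$, so $\pa_{\ct^i}(v) = \pa_\ct(v)$, while for $v = j$ the coordinate $x_{ijj'}$ depends only on whether $i \to j$, recorded identically in $\ct$ and $\ct^i$. Hence every $i$-side coordinate of $c_\ct$ equals the corresponding coordinate of $c_{\ct^i}$, and symmetrically on the $k$-side, so under the identification above $c_\ct = (c_{\ct^i}, c_{\ct^k})$. Moreover the gluing condition $i \to j \to k$ or $i \from j \from k$ is equivalent to exactly one of $i \to j$, $k \to j$ holding, i.e. to $c_{\ct^i}(\{i,j,j'\}) = 1 - c_{\ct^k}(\{j,j',k\})$, which is precisely $\pi_i(c_{\ct^i}) = \pi_k(c_{\ct^k})$. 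Thus the parting sends vertices of $\cim_T^{I,J}$ into the generating set of the toric fiber product. Conversely, given any pair of vertices $(v,w) = (c_{\ct^i}, c_{\ct^k})$ with $\pi_i(v) = \pi_k(w)$, I reglue $\ct^i$ and $\ct^k$ along $j$ into a graph with skeleton $T$, orienting edges of $T^i$ and $T^k$ as in the two factors; the fiber condition forces exactly one of $i \to j$, $k \to j$, so the gluing condition holds at $j$ (and at every other white degree-two node, inherited from the factors). As $T$ is a tree the regluing is acyclic, hence a valid DAG $\ct$ with $c_\ct = (v,w)$. This inverts the parting, giving a bijection onto $\{(v,w) \in V(\cim_{T^i}^{I^i,J^i}) \times V(\cim_{T^k}^{I^k,J^k}) : \pi_i(v) = \pi_k(w)\}$. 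Since $\pi_i(x) = x_{ijj'}$ and $\pi_k(x) = 1 - x_{jj'k}$ are integral projections onto $[0,1]$, taking convex hulls of both sides yields the claimed equality.

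The main obstacle is the coordinate decomposition in the first step: one must argue carefully that characteristic imset coordinates localize to closed neighborhoods (via Theorem~\ref{thm:imsetinterp} together with the fact that in a tree parents are neighbors), that the only index sets straddling the two sides are $x_{ijk}$ and $x_{ijj'k}$, and that these are annihilated by the gluing condition while the remaining shared index $\{j,j'\}$ is constant. Once the ambient space splits as a product, the regluing bijection and the translation of the gluing condition into the fiber condition $\pi_i = \pi_k$ are essentially routine.
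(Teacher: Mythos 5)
Your proof is correct and follows essentially the same route as the paper's: both kill the coordinates $x_{ijk}$ and $x_{ijj'k}$ via the gluing condition to split the ambient space into the two factors, and both identify the toric fiber product condition $x_{ijj'} + x_{jj'k} = 1$ with the orientation condition $i \to j \to k$ or $i \from j \from k$ at $j$. The paper's own proof is just a terser sketch of this argument; your version additionally spells out the localization of coordinates to closed neighborhoods and the explicit parting/regluing vertex bijection, which the paper leaves implicit.
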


\begin{proof}
By construction $\cim_T^{I,J} \subseteq \cim_T^{I \sqcup J}$, but because in each DAG yielding an imset vector of $\cim_T^{I,J}$ we have either $i \to j \to k$ or $i \from j \from k$, we know that $x_{ijk} = x_{ijj'k} = 0$. Each nonzero coordinate of $\cim_T^{I,J}$ is consequently a coordinate of either $\cim_{T^i}^{I^i,J^i}$ or $\cim_{T^k}^{I^k,J^k}$. Hence we may realize $\cim_T^{I,J}$ as a subpolytope of $\cim_{T^i}^{I^i,J^i} \times \cim_{T^k}^{I^k,J^k}$.

Since we have the edge $j' \to j$, the edges between $i,j$ and $j,k$ are both essential.
In the toric fiber product, we glue two characteristic imsets together if their coordinates satisfy $x_{ijj'} + x_{jj'k} = 1$, which happens if and only if $i \to j \to k$ or $i \from j \from k$ in every pair of DAGs in their corresponding Markov equivalence classes.
%
\end{proof}

The toric fiber product structure is useful due to the following lemma from \cite{dinu2020gorenstein}.

\begin{lemma}\cite[Lemma 3.2]{dinu2020gorenstein}
\label{lemma:TFPHRep}
Let $P_1$ and $P_2$ be polytopes and let $Q$ be a simplex. Let $\pi_i: P_i \to Q$ be an affine linear map such that $\pi_1(P_1) = \pi_2(P_2) = Q$. Then the toric fiber product is given by
\[P_1 \times_Q P_2 = \{(x,y): P_1 \times P_2: \pi_1(x) = \pi_2(y)\}.\] 
\end{lemma}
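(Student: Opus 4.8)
The plan is to prove the two inclusions separately. Write $A := P_1 \times_Q P_2 = \conv\{(v_i,w_j) : \pi_1(v_i) = \pi_2(w_j)\}$ for the toric fiber product and $B := \{(x,y) \in P_1 \times P_2 : \pi_1(x) = \pi_2(y)\}$ for the set-theoretic fiber. The inclusion $A \subseteq B$ is the easy direction and does not use that $Q$ is a simplex: each generating pair $(v_i,w_j)$ of $A$ satisfies $\pi_1(v_i) = \pi_2(w_j)$ by definition, and since $\pi_1,\pi_2$ are affine, the set $B$ is cut out of the convex set $P_1 \times P_2$ by the linear condition $\pi_1(x) - \pi_2(y) = 0$. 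Hence $B$ is convex and contains every generator of $A$, so $A \subseteq B$.

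For the reverse inclusion I would exploit two structural features of the toric fiber product: that $Q$ is a simplex, and that each projection sends the vertices of $P_1$ (resp.\ $P_2$) to \emph{vertices} of $Q$. The latter holds in our setting, where the $\cim$-polytopes are $0/1$ and each $\pi_i$ is a coordinate functional, so it only takes the values $0,1$ that are the vertices of the $1$-simplex $Q = [0,1]$ (more generally this is built into the grading of a toric fiber product). Label the vertices of $Q$ by $u_0,\dots,u_d$, color each vertex $v_i$ of $P_1$ by the index $c(i)$ with $\pi_1(v_i) = u_{c(i)}$, and color each vertex $w_j$ of $P_2$ by $c'(j)$ analogously. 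A pair $(v_i,w_j)$ is then a generator of $A$ precisely when $c(i) = c'(j)$; that is, when the two vertices share a color.

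Given $(x,y) \in B$, I would write $x = \sum_i \lambda_i v_i$ and $y = \sum_j \mu_j w_j$ as convex combinations of vertices and group the weights by color, setting $a_t = \sum_{c(i)=t}\lambda_i$ and $b_t = \sum_{c'(j)=t}\mu_j$. Because vertices map to vertices, $\pi_1(x) = \sum_t a_t u_t$ and $\pi_2(y) = \sum_t b_t u_t$ are the \emph{barycentric} expansions of the common point $\pi_1(x) = \pi_2(y)$ in $Q$. This is where the simplex hypothesis enters decisively: barycentric coordinates in a simplex are unique, forcing $a_t = b_t$ for every $t$. With the color masses matched, I would glue within each color class by the rescaled product coupling $\gamma_{ij} = \lambda_i \mu_j / a_t$ for $c(i)=c'(j)=t$ (and $a_t>0$). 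A short computation checks $\sum_j \gamma_{ij} = \lambda_i$, $\sum_i \gamma_{ij} = \mu_j$, and $\sum_{ij}\gamma_{ij}=1$, whence $\sum_{ij}\gamma_{ij}(v_i,w_j) = (x,y)$ exhibits $(x,y)$ as a convex combination of same-color (hence generating) pairs, giving $(x,y) \in A$.

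The main obstacle is pinning down exactly why the simplex hypothesis cannot be dropped: the entire argument rests on the equality $a_t = b_t$, which is nothing but the uniqueness of the barycentric expansion of $\pi_1(x)=\pi_2(y)$, and this uniqueness requires both that $Q$ be a simplex and that the projected vertices be the $u_t$ themselves. If $Q$ is merely a polytope, or if some vertex of $P_i$ maps into the relative interior of $Q$, the masses need not agree and $A$ can be strictly smaller than $B$; indeed, a triangle projecting affinely onto a segment with one vertex sent to the midpoint already yields $A \subsetneq B$. Once $a_t = b_t$ is secured, verifying the marginals and total mass of $\gamma_{ij}$, and discarding the colors with $a_t = 0$, is routine bookkeeping.
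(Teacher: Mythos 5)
Your proof is correct, but note that the paper itself gives no proof of Lemma~\ref{lemma:TFPHRep}: it is quoted from \cite[Lemma 3.2]{dinu2020gorenstein}, so there is no internal argument to compare against. Your argument is the natural one: the forward inclusion follows from convexity of the fiber, and the reverse inclusion by expanding $x$ and $y$ as convex combinations of vertices, matching the color masses $a_t = b_t$ via uniqueness of barycentric coordinates in the simplex $Q$, and gluing within each color class by the rescaled product coupling $\gamma_{ij} = \lambda_i \mu_j / a_t$; the marginal and total-mass verifications you outline do go through, and colors with $a_t = 0$ carry no weight on either side, so discarding them is harmless. The most valuable part of your write-up is the observation that the statement, as literally quoted, is incomplete: one needs the hypothesis that each $\pi_i$ maps vertices of $P_i$ to vertices of $Q$, and your triangle-over-a-segment example shows the conclusion genuinely fails otherwise. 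That hypothesis is implicit in the toric fiber product framework of \cite{dinu2020gorenstein}, where the multigrading forces generators to sit over vertices of the simplex, and it holds in this paper's application: the relevant polytopes are $0/1$, and the gluing maps $\pi_i(x) = x_{ijj'}$ and $\pi_k(x) = 1 - x_{jj'k}$ send vertices into $\{0,1\}$, the vertex set of the $1$-simplex $Q = [0,1]$. So your proof establishes exactly the version of the lemma that the paper actually uses, and in addition documents why the simplex and vertex-to-vertex hypotheses cannot be dropped.
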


In Definition~\ref{defn:TFP} the toric fiber product is defined by a V-representation. If the H-representations of $P_1$ and $P_2$ are known, Lemma~\ref{lemma:TFPHRep} allows us to also compute an H-representation of the toric fiber product. This lemma immediately yields the following corollary.

\begin{corollary}
\label{corollary:hyperplane}
Let $\cim_{T^i}^{I^i,J^i} \times \cim_{T^k}^{I^k,J^k}$ denote the product polytope. Then $\cim_T^{I,J}$ is the intersection of $\cim_{T^i}^{I^i,J^i} \times \cim_{T^k}^{I^k,J^k}$ with the hyperplane $x_{ijj'} + x_{jj'k} = 1$.
\end{corollary}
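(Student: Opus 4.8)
The plan is to combine the identification of $\cim_T^{I,J}$ as a toric fiber product, established in the preceding lemma, with the explicit H-representation supplied by Lemma~\ref{lemma:TFPHRep}. Concretely, I set $P_i = \cim_{T^i}^{I^i,J^i}$, $P_k = \cim_{T^k}^{I^k,J^k}$, and $Q = [0,1]$, with the affine maps $\pi_i(x) = x_{ijj'}$ and $\pi_k(x) = 1 - x_{jj'k}$. The preceding lemma gives $\cim_T^{I,J} = P_i \times_{[0,1]} P_k$, so the entire argument reduces to checking that the hypotheses of Lemma~\ref{lemma:TFPHRep} hold and then reading off the resulting equation.

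To invoke Lemma~\ref{lemma:TFPHRep}, I would verify its three hypotheses. The base $Q = [0,1]$ is the standard $1$-simplex, and $\pi_i$, $\pi_k$ are manifestly affine linear in the imset coordinates; the only non-formal point is the surjectivity condition $\pi_i(P_i) = \pi_k(P_k) = [0,1]$. I expect this to be the main obstacle, mild as it is. To handle it I would use the combinatorial interpretation of the relevant coordinates from Theorem~\ref{thm:imsetinterp}: since $j' \to j$ is essential in both $T^i$ and $T^k$ and $j'$ is a source, one checks that $x_{ijj'} = 1$ precisely when $i \to j$ (and $0$ when $j \to i$), while $x_{jj'k} = 1$ precisely when $k \to j$. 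Because both orientations of the leaf edges $i - j$ and $j - k$ are realized by DAGs with the respective skeletons, each of $\pi_i$ and $\pi_k$ attains both values $0$ and $1$ at vertices, so taking convex hulls yields $\pi_i(P_i) = \pi_k(P_k) = [0,1]$.

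With the hypotheses confirmed, Lemma~\ref{lemma:TFPHRep} gives
\[
\cim_T^{I,J} = P_i \times_{[0,1]} P_k = \{(x,y) \in P_i \times P_k : \pi_i(x) = \pi_k(y)\}.
\]
Substituting the formulas for $\pi_i$ and $\pi_k$, the defining relation $\pi_i(x) = \pi_k(y)$ becomes $x_{ijj'} = 1 - x_{jj'k}$, that is, $x_{ijj'} + x_{jj'k} = 1$, which is exactly the hyperplane in the statement. Hence $\cim_T^{I,J}$ is the intersection of the product polytope $P_i \times P_k$ with this hyperplane, completing the proof. Everything past the surjectivity check is a direct substitution into Lemma~\ref{lemma:TFPHRep}.
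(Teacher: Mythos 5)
Your proposal is correct and follows essentially the same route as the paper: identify $\cim_T^{I,J}$ as the toric fiber product $\cim_{T^i}^{I^i,J^i} \times_{[0,1]} \cim_{T^k}^{I^k,J^k}$ via the preceding lemma and then apply Lemma~\ref{lemma:TFPHRep} to read off the hyperplane $x_{ijj'} + x_{jj'k} = 1$. The only difference is that you explicitly verify the surjectivity hypothesis $\pi_i(P_i) = \pi_k(P_k) = [0,1]$ (correctly, using Theorem~\ref{thm:imsetinterp} and the fact that both orientations of each leaf edge are realizable), a check the paper leaves implicit when it calls the corollary immediate.
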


Hence the inequalities defining $\cim_T^{I,J}$ are those defining the product. Given some inequality $a \cdot x  \leq b$ in the H-representation of $\cim_{T^i}^{I^i, J^i}$, the inequality $(a,0)\cdot (x,y) \leq b$ lies in the H-representation of the product polytope (and $\cim_T^{I,J}$). 
Hence the inequalities are exactly what we saw in Example \ref{ex:TFPofTreesIneqs}. 
In the following special case, we immediately obtain an H-representation.

\begin{theorem}
\label{theorem:facetsseparatedgluingtree}
Let $(T,I,J)$ be a separated gluing tree. The facets of $\cim_T^{I,J}$ are the star, bidirected edge, and forked-tree inequalities associated to each of the black centered stars in $(T,I,J)$.
\end{theorem}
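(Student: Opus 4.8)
The plan is to induct on the number of black-centered stars in $(T,I,J)$, using the toric fiber product to reduce a gluing tree to strictly smaller ones. Since a separated gluing tree is obtained by iteratively gluing black-centered stars at white nodes, the number of such stars exceeds $|J|$ by one; when $J = \emptyset$ the tree is a single black-centered star and Theorem~\ref{thm:FacetsOfStarTrees} is exactly the desired statement, supplying the base case. For the inductive step I would fix $j \in J$ with $N_T(j) = \{i,k\}$ and form the interventional parting of Definition~\ref{defn:interventionalParting} into $(T^i,I^i,J^i)$ and $(T^k,I^k,J^k)$. First I would check these are again separated gluing trees with strictly smaller $J$: separatedness forces $i$ and $k$ to be black, since the internal edges $i-j$ and $j-k$ are not leaves and hence cannot be monochromatic, while parting turns $j$ into a white leaf adjacent to the black node $i$ (respectively $k$), so the only affected edges are bichromatic leaf edges and no white node gains a neighbor. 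The inductive hypothesis then supplies the H-representations of $\cim_{T^i}^{I^i,J^i}$ and $\cim_{T^k}^{I^k,J^k}$.

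Next I would match the two inequality lists to the product polytope. Because parting only deletes the single edge $j-k$ (respectively $i-j$), every black node retains its full closed neighborhood in exactly one part, and in particular the stars centered at $i$ and $k$ are unchanged; hence the black-centered stars of $T$ are the disjoint union of those of $T^i$ and those of $T^k$. Moreover, every white node of $T$---including $j$---is intervened in $\cim_T^{I \cup J}$, so the star, bidirected-edge, and forked-tree inequalities attached to a fixed black star are literally the same whether read off in $T$ or in the part containing that star. Consequently the full list of inequalities in the theorem is exactly the union of the facet lists of the two factors, which in turn is precisely the list of lifted facet inequalities $(a,0)$ and $(0,a')$ of the product polytope $\cim_{T^i}^{I^i,J^i} \times \cim_{T^k}^{I^k,J^k}$.

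By Corollary~\ref{corollary:hyperplane}, $\cim_T^{I,J}$ is the slice of this product by the hyperplane $H = \{x_{ijj'} + x_{jj'k} = 1\}$, and the remaining work---the main obstacle---is to show that slicing by $H$ turns each product facet into a facet of $\cim_T^{I,J}$ and introduces no others, so that $H$ contributes only an equation of the affine span rather than a facet. Here I would use the combinatorial meaning of the glued coordinates from Theorem~\ref{thm:imsetinterp}: since $j' \to j$ is fixed, $x_{ijj'} = 1$ holds exactly when $i \to j$, so this coordinate takes both values $0$ and $1$ on the vertices of the $i$-factor, and likewise $x_{jj'k}$ on the $k$-factor. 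This is precisely the surjectivity hypothesis $\pi_1(P_1) = \pi_2(P_2) = [0,1]$ of Lemma~\ref{lemma:TFPHRep}, and it places $H$ through the relative interior of the product; by standard polytope theory this forces every face of the slice to be the intersection with $H$ of a face of the product and prevents $H$ from being facet-defining. To obtain irredundancy I would further verify, for each product facet $F$, that $H$ meets the relative interior of $F$, which reduces to checking that the relevant glued coordinate is non-constant on $F$. This is immediate when $F$ comes from a star not containing the glued edge (the coordinate does not even appear), and for the stars at $i$ and $k$ it follows by exhibiting, on each such facet, $\ci$-DAGs realizing both orientations of the glued edge. Combining the two cases shows that the product facets restrict to exactly the facets of $\cim_T^{I,J}$, completing the induction.
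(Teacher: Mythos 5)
Your proof is correct and takes essentially the same route as the paper's: the paper likewise decomposes the separated gluing tree via interventional partings into black-centered stars, invokes Theorem~\ref{thm:FacetsOfStarTrees} for each star, and assembles the result through the toric fiber product slicing of Corollary~\ref{corollary:hyperplane}. The paper's own proof is only a brief sketch of this decomposition, so your added bookkeeping---the induction on the number of stars, the check that the parts remain separated gluing trees, and the verification that product facets stay facets after slicing while the gluing hyperplane contributes only to the affine span---simply makes explicit what the paper leaves implicit.
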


\begin{proof}
Recall that $(T,I,J)$ is separated if the only monochromatic edges are leaves. Such a gluing tree can be decomposed via interventional partings into black centered stars. Hence the facets of $\cim_T^{I,J}$ are just those of the black centered star subgraphs.
\end{proof}

\section{Eliminating Interventions}
\label{section:eliminatingInterventions}

Throughout this section we fix a gluing tree $(T,I,J)$ and a node $j \in J$ with $N_T(j) = \{i,k\}$. We let $(X,I, J \setminus \{j\})$ be the gluing tree obtained by replacing $i - j - k$ with just $i - k$. The main goal of this section is to eliminate the non-leaf interventions (at elements in $J$), which allows us to construct an H-representation for $\cim_X^{I, J \setminus \{j\}}$ from an H-representation for $\cim_T^{I,J}$. In the extreme cases we obtain $\cim_T^{I, \emptyset} = \cim_T^I$ and $\cim_T^{\emptyset, \emptyset} = \cim_T$. 

The section is organized as follows. In Section~\ref{subsection:projection} we show that up to relabeling $\cim_T^{I,J}$ projects to $\cim_X^{I, J \setminus \{j\}}$. Then in Section~\ref{subsection:facetCIMTij} we state the main result of this section (Theorem~\ref{theorem:facetsOfCIMTIJ}), which is an H-representation of $\cim_T^{I,J}$ that generalizes Theorem~\ref{theorem:facetsseparatedgluingtree} beyond the case of separated gluing trees. We obtain this H-representation in Section~\ref{subsection:FMElimination} by performing iterated Fourier-Motzkin elimination.

\subsection{A Coordinate Projection on Imsets}
\label{subsection:projection}

We begin with the following relabeling. For each internal node $\ell \neq j$ and each vertex set $A$ of a star subgraph of $N_T[\ell]$, we relabel the coordinates $x_A$ of $\cim_T^{I,J}$ as follows:

\begin{enumerate}
    \item If $j \not\in A$, then $x_A \mapsto x_A$.
    \item If $\ell = i$ and $j \in A$, then $x_A \mapsto x_{(A \cup \{k\}) \setminus \{j\}}$.
    \item If $\ell = k$ and $j \in A$, then $x_A \mapsto x_{(A \cup \{i\}) \setminus \{j\}}$.
\end{enumerate}
We do not relabel coordinates associated to $N_T[j]$. This relabeling gives rise to a coordinate projection.

\begin{example}
Consider the tree $T^{I \cup J}$ from Figure~\ref{fig:FourierMotzkinRunningExample}. The coordinates of $\cim_T^{I,J}$ are associated to the sets
\[\{123,134,234,1234,345,455',55'6,567,568,678\}.\]
Consider the projection that deletes the $ijj' = 455'$ and $jj'k = 55'6$ coordinates. After relabeling we obtain
\[\{123,134,234,1234,346,455',55'6,567,568,678\}.\]
Now projecting away the appropriate coordinates yields the tree $X$ in Figure~\ref{fig:FourierMotzkinRunningExample}.
\end{example}

\begin{proposition}
Up to relabeling, $\cim_T^{I,J}$ projects onto $\cim_X^{I, J \setminus \{j\}}$.
\end{proposition}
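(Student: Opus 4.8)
The plan is to exhibit an explicit linear map $\phi$ realizing the projection and to prove surjectivity by tracking its action on vertices, which correspond to $\ci$-Markov equivalence classes of DAGs with skeleton $T$ subject to the gluing constraint at $j$. Concretely, $\phi$ is the composition of the coordinate relabeling described above with the coordinate projection that forgets every coordinate $x_A$ associated to a star subgraph of $N_T[j]$. Since $\cim_T^{I,J} = \conv\{c_\ct\}$ is a V-polytope and $\phi$ is linear, it suffices to show that $\phi$ carries the vertex set of $\cim_T^{I,J}$ onto the vertex set of $\cim_X^{I, J \setminus \{j\}}$; the image is then $\conv\{\phi(c_\ct)\} = \cim_X^{I,J\setminus\{j\}}$.

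First I would set up the graph-level correspondence. Given a DAG $\ct$ with skeleton $T$ satisfying the defining constraint of $\cim_T^{I,J}$, we have $i \to j \to k$ or $i \from j \from k$. Define $\ct'$ with skeleton $X$ by deleting $j$ and $j'$, retaining every edge not incident to $j$, and orienting the new edge $i - k$ as $i \to k$ in the first case and $i \from k$ in the second. One checks that $\ct'$ satisfies the gluing constraint for $(X, I, J \setminus \{j\})$ at each remaining white node, so $c_{\ct'}$ is a vertex of $\cim_X^{I,J\setminus\{j\}}$. Conversely, any constrained DAG with skeleton $X$ lifts to such a $\ct$ by reinserting $j$ with $j' \to j$ and orienting the path $i - j - k$ in the direction dictated by the orientation of $i - k$; this gives surjectivity of the lift.

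The technical core is the identity $\phi(c_\ct) = c_{\ct'}$, which I would verify coordinate by coordinate using Theorem~\ref{thm:imsetinterp}. For a coordinate $x_A$ with $j \notin A$ the relabeling is trivial and $c_\ct(A) = c_{\ct'}(A)$ because every edge whose orientation is consulted lies away from $j$ and is unchanged by the contraction. For a coordinate $x_A$ with $A \subseteq N_T[i]$ and $j \in A$, the relabeling sends it to $x_{A'}$ with $A' = (A \setminus \{j\}) \cup \{k\} \subseteq N_X[i]$; here $c_\ct(A) = 1$ forces $j \to i$, so the gluing constraint yields $k \to j \to i$, which is exactly the case in which $\ct'$ has $k \to i$, while the remaining parents of $i$ in $A \setminus \{i,j\}$ are untouched by the contraction, so $c_\ct(A) = c_{\ct'}(A')$; the case $A \subseteq N_T[k]$ with $j \in A$ is symmetric. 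Finally, the forgotten $N_T[j]$-coordinates record only the orientation of the path through $j$, which is already encoded in the orientation of $i - k$, so discarding them loses no information. At this step one also uses that $i$ and $k$ are non-adjacent in the tree $T$ to confirm that the relabeling does not identify distinct coordinate indices and hence is genuinely a coordinate projection.

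I expect the main obstacle to be the middle case of the coordinate check: ensuring that the constraint $i \to j \to k$ or $i \from j \from k$ translates the local parent structure at $i$ (and at $k$) in $\ct$ into the correct parent structure at $i$ (resp.\ $k$) in $\ct'$ after $j$ is replaced by $k$ (resp.\ $i$). Once this local bookkeeping is pinned down, the global statement $\phi(\cim_T^{I,J}) = \cim_X^{I,J\setminus\{j\}}$ follows at once from surjectivity of the lift and linearity of $\phi$.
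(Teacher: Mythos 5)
Your proposal is correct and follows essentially the same route as the paper's proof: contract $i \to j \to k$ (or its reversal) to a single oriented edge $i - k$ at the level of DAGs, observe that the relabeling plus coordinate projection carries $c_\ct$ to $c_{\ct'}$, and invert the construction to get surjectivity on vertices. The paper states the key step ``up to relabeling, $c_\ct$ maps to $c_\cx$ via a coordinate projection'' without proof, whereas you supply the coordinate-by-coordinate verification via Theorem~\ref{thm:imsetinterp}; this is a faithful filling-in of the same argument rather than a different one.
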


\begin{proof}
Let $\ct$ be a DAG with skeleton $T$ and suppose $c_\ct \in \cim_T^{I,J}$. Without loss of generality we assume that $i \to j \to k$ since the other case is similar. We form a DAG $\cx$ with skeleton $X$ by replacing $i \to j \to k$ in $\ct$ with $i \to k$. Up to relabeling, $c_\ct$  maps to $c_\cx$ via a coordinate projection eliminating coordinates involving $j$. Hence $\cim_T^{I,J} \to \cim_X^{I,J \setminus \{j\}}$. Clearly on a given DAG this process may be reversed, and so the vertices of $\cim_T^{I,J}$ surject the vertices of $\cim_X^{I,J \setminus \{j\}}$.
\end{proof}

\subsection{Facets of Interventional CIM Polytopes}
\label{subsection:facetCIMTij}
In this subsection we describe the facets of $\cim_T^{I,J}$. We begin with star inequalities.

\begin{definition}
Let $\ell$ be an internal black node of $(T,I,J)$ and let $S'$ be a substar of $T$ with center $\ell$. The star inequality associated to $S'$ is
\[\sum\limits_{V(S') \subseteq A \subseteq N_T[\ell]} (-1)^{|A \setminus V(S')|} x_A \geq 0.\]
\end{definition}

For bidirected-edge inequalities, we refer readers back to Definition~\ref{definition:bidirectedEdgeInequality}, which naturally extends to the setting of general trees. Lastly, we have forked-tree inequalities. Recall that $T^I$ is the tree $T$ together with edges $i' \to i$ for $i \in I$ (see Subsection~\ref{sec:notation}).  

\begin{definition}
\label{definition:forkedTreeIJ}
Let $(T,I,J)$ be a gluing tree and let $T'$ be a subtree of $T$ containing no leaves of $T^I$. 
We say $T'$ is \emph{forked} if the following three conditions hold.
\begin{enumerate}
    \item For each leaf $u$ of $T'$ such that $u \not\in I \sqcup J$, $\deg_T(u) - \deg_{T'}(u) \geq 2$.
    \item For each $u \in I \sqcup J$, if $N_T[u] \cap V(T') \neq \emptyset$, then $u \in V(T')$.
    \item The set $I \sqcup J$ intersects $V(T')$ in only leaf nodes of $T'$.
\end{enumerate}
\end{definition}

\begin{definition}\label{def:forkedinequality}
Let $T'$ be a forked subtree of $(T,I,J)$ and let $L(T')$ denote the set of leaf edges $(c,d)$ of $T'$ with $c \in I \sqcup J$. We let $\interior(T')$ denote the set of nonleaf nodes of $T'$ and define $F(T')$ to be
\[F(T') = \{u \in V(T'):\deg_{T}(u) - \deg_{T'}(u) \geq 2\}.\]
The \emph{forked-tree inequality} is 
\[\sum\limits_{c \in F(T')} \ind_{c \to N_{T'}(c)}^{N_T[c]}(x) + \sum\limits_{(c,d) \in L(T')} (1-x_{cc'd}) \leq 1 + \sum\limits_{c \in \interior(T')} \#^{N_{T'}[c]}(x).\]
\end{definition}

Note that the neighborhood of $c$ in a tree forms a star subgraph, and so the function $\#^{N_{T'}[c]}(x)$ is as specifed in Definition~\ref{def:starhash}. Also, in the above definition, we use the notation $\ind_{c \to N_{T'}(c)}^{N_T[c]}(x)$ to denote the indicator function in Definition~\ref{def:starindicator} in the star subgraph of $T$ induced by the node set $N_T[c]$.  

The following H-representation is the first main result of this article.

\begin{theorem}
\label{theorem:facetsOfCIMTIJ}
The star inequalities, bidirected-edge inequalities, and forked-tree inequalities associated to $(T,I,J)$ form an H-representation of $\cim_T^{I,J}$.
\end{theorem}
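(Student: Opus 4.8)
The plan is to prove the theorem by induction, reducing an arbitrary gluing tree to the separated case already settled in Theorem~\ref{theorem:facetsseparatedgluingtree}. I would induct on the number of internal edges of $T$ joining two black nodes. If there are none, then $(T,I,J)$ is separated and the statement is exactly Theorem~\ref{theorem:facetsseparatedgluingtree}. Otherwise, choose such an edge $i-k$ and subdivide it with a new degree-two white node $j$, producing a gluing tree $(\hat T, I, J\cup\{j\})$ with one fewer internal black–black edge. By the inductive hypothesis, $\cim_{\hat T}^{I,J\cup\{j\}}$ is cut out by the star, bidirected-edge, and forked-tree inequalities. Since $T$ is recovered from $\hat T$ by eliminating $j$, the coordinate projection of Subsection~\ref{subsection:projection} realizes $\cim_T^{I,J}$ (up to the relabeling described there) as the image of $\cim_{\hat T}^{I,J\cup\{j\}}$ under deletion of the coordinates $x_{ijj'}$ and $x_{jj'k}$. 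As an H-representation of a projection is computed by Fourier–Motzkin elimination, the task reduces to carrying out this elimination and matching the output with the claimed facets of $\cim_T^{I,J}$.

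To perform the elimination I would first use the affine relation $x_{ijj'}+x_{jj'k}=1$, valid on all vertices because $j'\to j$ forces $i\to j\to k$ or $i\from j\from k$ (Corollary~\ref{corollary:hyperplane}). Substituting $x_{jj'k}=1-x_{ijj'}$ reduces the problem to eliminating the single variable $t:=x_{ijj'}$. I would then classify the facets of $\cim_{\hat T}^{I,J\cup\{j\}}$ by the sign of their $t$-coefficient. Star inequalities centered away from $i-j-k$, and all bidirected-edge and forked-tree inequalities not meeting $j$, are free of $t$ and, after the relabeling, are exactly the corresponding inequalities of $(T,I,J)$. The two bidirected-edge inequalities at $i-j$ and $j-k$ read $t+\ind_{i\from j}(x)\leq 1$ and $\ind_{k\from j}(x)+(1-t)\leq 1$, that is $\ind_{k\from j}(x)\leq t\leq 1-\ind_{i\from j}(x)$; their Fourier–Motzkin combination is $\ind_{i\from j}(x)+\ind_{k\from j}(x)\leq 1$, which under the relabeling is precisely the bidirected-edge inequality on the new edge $i-k$.

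The substantive combinations involve the forked-tree inequalities. By condition (3) of Definition~\ref{definition:forkedTreeIJ}, the white node $j$ can appear only as a leaf of a forked subtree of $\hat T$; hence each forked-tree inequality that involves $t$ does so through a single term $(1-x_{jj'd})$, equal to $1-t$ when $j$ is reached from the $i$-side ($d=i$) and to $t$ when it is reached from the $k$-side ($d=k$). These give, respectively, lower and upper bounds on $t$. I would show that the Fourier–Motzkin combination of an $i$-side forked-tree inequality with a $k$-side forked-tree inequality yields exactly the forked-tree inequality of $(T,I,J)$ attached to the forked subtree obtained by gluing the two subtrees along $i-k$; verifying this is an algebraic identity among the functionals $\ind_{c\to N_{T'}(c)}$, $\#^{N_{T'}[c]}$, and the leaf terms, for which the evaluations in Propositions~\ref{proposition:edgesForcedOut} and~\ref{prop:numPointCenterward} supply the bookkeeping. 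One must also treat the mixed combinations of a forked-tree bound with a bidirected-edge bound, which I expect to reproduce the forked-tree inequalities in which only one of $i,k$ becomes a forking node.

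The step I expect to be the main obstacle is controlling redundancy: Fourier–Motzkin a priori returns every pairing of a $t$-positive with a $t$-negative inequality, and I must argue that all pairings other than those identified above are redundant on $\cim_T^{I,J}$. I would do this by showing that a combination whose two subtrees do not glue into a subtree satisfying the forking conditions of Definition~\ref{definition:forkedTreeIJ} either collapses to a trivially valid inequality of the form $0\leq 1$ or is a nonnegative combination of the retained facets together with the surviving star inequalities. As a final check that nothing is lost and nothing spurious remains, I would confirm that the retained inequalities are valid on $\cim_T^{I,J}$ and, mirroring the facet/vertex count of Theorem~\ref{thm:FacetsOfStarTrees}, that their number agrees with a direct enumeration of the $\ci$-Markov equivalence classes indexing the vertices.
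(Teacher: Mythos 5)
Your proposal takes essentially the same route as the paper's proof: the paper also argues by induction with separated gluing trees (Theorem~\ref{theorem:facetsseparatedgluingtree}) as the base case, and its induction step is precisely the Fourier--Motzkin elimination you describe --- substituting $x_{jj'k}=1-x_{ijj'}$ via Corollary~\ref{corollary:hyperplane}, classifying inequalities by the sign of the $x_{ijj'}$-coefficient (Proposition~\ref{proposition:signOfFunctional}), matching the bidirected+bidirected, forked+forked, and mixed forked+bidirected combinations to the claimed inequalities of the smaller tree (Lemmas~\ref{lemma:bidirectedEdgeIeqProjection}, \ref{lemma:forkedTreeOppositeSides}, and~\ref{lemma:FTIPlusBDIpart1} with Proposition~\ref{proposition:xPrimeForkedCase6}), and proving the leftover combinations extraneous exactly as you anticipate, by writing them as nonnegative sums of retained inequalities (Lemma~\ref{lemma:FTIPlusBDIpart2}). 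The only deviation is your closing facet/vertex count check, which is unnecessary (Fourier--Motzkin already guarantees completeness of the projected H-representation) and would not work verbatim since $\cim_T^{I,J}$ is not a simplex for general gluing trees; this does not affect the correctness of your main line of argument.
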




\begin{figure}
    \centering
\begin{tikzpicture}
\draw (-0.5,0.866)--(0,0);
\draw (-0.5,-0.866)--(0,0);
\draw (0,0)--(3,0);
\draw[-Stealth] (2,1)--(2,0.1);
\draw (3,0)--(3.5,0.866);
\draw (3,0)--(3.5,-0.866);

\node[wB] at (-0.5,0.866) {};
\node[wB] at (-0.5,-0.866) {};
\node[wB] at (0,0) {};
\node[wB] at (1,0) {};
\node[wW] at (2,0) {};
\node[wB] at (2,1) {};
\node[wB] at (3,0) {};
\node[wB] at (3.5,0.866) {};
\node[wB] at (3.5,-0.866) {};

\node at (-0.75,0.866) {$1$};
\node at (-0.75,-0.866) {$2$};
\node at (-0.3,0) {$3$};
\node at (1,-0.3) {$4$};
\node at (2,-0.3) {$5$};
\node at (2,1.3) {$5'$};
\node at (3.3,0) {$6$};
\node at (3.75,0.866) {$7$};
\node at (3.75,-0.866) {$8$};
\node at (1.5, -1.5) {T};
\node at (5, 0) {};
\end{tikzpicture}
\begin{tikzpicture}
\draw [semithick] (0,0)--(0.94,0.94);
\draw [semithick] (0,2)--(1,1);
\draw [semithick] (2,1)--(1.08,1);
\draw [semithick] (3,1)--(2,1);
\draw [semithick] (3,1)--(4,0);
\draw [semithick] (3,1)--(4,2);
);

\node at (-0.25,0) {2};
\node at (-0.25,2) {1};
\node at (0.65,1) {3};
\node at (3.4,1) {6};
\node at (4.25,0) {8};
\node at (4.25,2) {7};
\node[below] at (2,1) {4};

\node[wB] at (0,0) {};
\node[wB] at (0,2) {};
\node[wB] at (1,1) {};
\node[wB] at (2,1) {};
\node[wB] at (3,1) {};
\node[wB] at (4,0) {};
\node[wB] at (4,2) {};

\node at (2, -.5) {$X$};
\end{tikzpicture}
    \caption{On the left is a gluing tree $(T, I, J)$ where $I = \emptyset$ and $J = \{5\}.$ The tree $X$ on the right is the resulting tree we obtain after projecting away the intervention on node $5$.}
    \label{fig:FourierMotzkinRunningExample}
\end{figure}
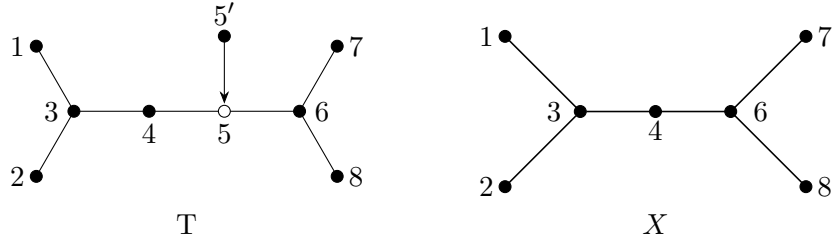

We note that restricting to the case of $I = J = \emptyset$ yields $\cim_T$ since we have no interventions.

\subsection{Fourier-Motzkin Elimination on CIM Polytopes}
\label{subsection:FMElimination}
We prove Theorem~\ref{theorem:facetsOfCIMTIJ} by induction. For the base case we note that if $J$ is a separated gluing tree, then one obtains Theorem~\ref{theorem:facetsseparatedgluingtree}. For the induction step we take the H-representation of $\cim_T^{I,J}$ and produce the H-representation of $\cim_X^{I, J \setminus \{j\}}$ via \emph{Fourier-Motzkin elimination} (see \cite[Chapter~1]{ziegler2012lectures} for a definition). 
We remind readers of the following equation
\begin{equation}
\label{equation:toricFiberProductEquation}
x_{ijj'} + x_{jj'k} = 1
\end{equation}
which relates the two coordinates we project away. The following proposition tells us how to perform this Fourier-Motzkin elimination. In the statement and proof of Proposition~\ref{proposition:signOfFunctional}, we use Equation~\ref{equation:toricFiberProductEquation} to eliminate $x_{jj'k}$ leaving only one coordinate to eliminate.

\begin{proposition}
\label{proposition:signOfFunctional}
Let $(T,I,J)$ be a gluing tree, let $j \in J$, and let $i - j$ be an edge of $T$.

\begin{enumerate}
\item If $a \cdot x \leq b$ is the bidirected-edge inequality associated to $i - j$, then $a_{ijj'} = 1$.
\item If $a \cdot x \leq b$ is the bidirected-edge inequality associated to $j - k$, then $a_{ijj'} = -1$.
\item If $a \cdot x \leq b$ is a forked-tree inequality with $i - j$ a leaf edge of the forked tree, then $a_{ijj'} = -1$.
\item If $a \cdot x \leq b$ is a forked-tree inequality with $j - k$ a leaf edge of the forked tree, then $a_{ijj'} = 1$.
\item If $a \cdot x \leq b$ is a star inequality, a bidirected-edge inequality not associated to $i - j$ or $j - k$, or a forked-tree inequality that does not contain $i - j$ or $j - k$, then $a_{ijj'} = 0$.
\end{enumerate}
\end{proposition}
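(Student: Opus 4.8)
The plan is to organize the proof around a single structural observation: among all coordinates occurring in the H-representation, only $x_{ijj'}$ and $x_{jj'k}$ involve the interventional node $j'$, and $j'$ is adjacent (in $T^I$) only to $j$. First I would record where interventional-node coordinates can possibly appear in each defining functional. The functional $\ind_{c \to L}$ of Definition~\ref{def:starindicator} and the functional $\#^{S}$ of Definition~\ref{def:starhash} are supported on coordinates $x_A$ with $A \subseteq N_T[c]$ for a node $c$ of $T$; since $T$ contains no interventional nodes, none of these sets can contain $j'$. Consequently $x_{ijj'}$ and $x_{jj'k}$ never appear in a star inequality, nor in the sum $\sum_{c \in F(T')}$ or the right-hand sum $\sum_{c \in \interior(T')}$ of a forked-tree inequality. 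This settles the star-inequality portion of case (5) at once.

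Second, for the bidirected-edge inequalities I would use that the functional $\ind_{v \from u}$ reduces to the single monomial $x_{uvv'}$ exactly when $v$ is an intervention target (Definition~\ref{definition:bidirectedEdgeInequality}), and is otherwise supported on $N_T[v]$. For the edge $i - j$ the inequality is $\ind_{j \from i}(x) + \ind_{i \from j}(x) \leq 1$; since $j \in J$ is a target, $\ind_{j \from i}(x) = x_{ijj'}$ contributes $a_{ijj'} = +1$, while $\ind_{i \from j}$ is supported on $N_T[i] \not\ni j'$ and contributes nothing, giving (1). For the edge $j - k$ the term $\ind_{j \from k}(x) = x_{jj'k}$ appears; after substituting $x_{jj'k} = 1 - x_{ijj'}$ via Equation~\ref{equation:toricFiberProductEquation}, this contributes $a_{ijj'} = -1$, while $\ind_{k \from j}$ is supported on $N_T[k] \not\ni j'$, giving (2). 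For any other bidirected-edge inequality both functionals are centered at nodes distinct from $j$ (the only internal edges of $T^I$ incident to $j$ are $i - j$ and $j - k$, since $N_T(j) = \{i,k\}$), so neither $x_{ijj'}$ nor $x_{jj'k}$ occurs, settling that part of (5).

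Third, for the forked-tree inequality I would isolate the only term that can involve $j'$, namely the middle sum $\sum_{(c,d) \in L(T')}(1 - x_{cc'd})$. The monomial $x_{cc'd}$ equals $x_{ijj'}$ or $x_{jj'k}$ only if $c = j$ (since $c' = j'$ forces $c = j$), in which case the leaf edge $(c,d)$ is $(j,i)$ or $(j,k)$. By the forked conditions of Definition~\ref{def:forkedinequality}, whenever $j \in V(T')$ it is a leaf of $T'$ with a unique neighbor in $\{i,k\}$, so exactly one of these two leaf edges can occur, and (using connectedness of the subtree $T'$ in the tree $T$) the other of $i,k$ then lies outside $V(T')$. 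If $i - j$ is the leaf edge, the contributing term is $-x_{ijj'}$, giving $a_{ijj'} = -1$ and case (3); if $j - k$ is the leaf edge, the term is $-x_{jj'k} = -(1 - x_{ijj'})$, giving $a_{ijj'} = +1$ and case (4). If the forked tree contains neither $i - j$ nor $j - k$, then by condition (2) of Definition~\ref{def:forkedinequality} we have $j \notin V(T')$, so no leaf-edge term involves $j'$ and $a_{ijj'} = 0$, completing (5).

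I expect the main obstacle to be the bookkeeping in the forked-tree case: verifying rigorously that the interventional coordinate cannot sneak into the $\ind_{c \to N_{T'}(c)}$ or $\#^{N_{T'}[c]}$ functionals, correctly matching each admissible leaf edge to either $x_{ijj'}$ or $x_{jj'k}$, and tracking the sign flip introduced by the substitution $x_{jj'k} = 1 - x_{ijj'}$ in cases (2) and (4). Once the support observation of the first paragraph is in hand, the remaining verifications are routine.
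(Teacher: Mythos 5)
Your argument is correct and is essentially the paper's own proof in expanded form: the paper likewise handles (1) and (5) by observing that the interventional coordinates $x_{ijj'}$ and $x_{jj'k}$ can only enter through the target-case bidirected-edge indicator and through the leaf-edge sum $\sum_{(c,d)\in L(T')}(1-x_{cc'd})$ of a forked-tree inequality, reads off the coefficient $-1$ for (3) from that leaf-edge sum, and then deduces (2) and (4) via the sign flip under the substitution $x_{jj'k} = 1 - x_{ijj'}$. One cosmetic point: in case (5), your appeal to condition (2) of Definition~\ref{definition:forkedTreeIJ} to conclude $j \notin V(T')$ is misplaced (that condition forces inclusion of targets, not their exclusion), but it is also unnecessary---your own observation that a leaf-edge term involving $j'$ would force $(j,i)$ or $(j,k)$ to be a leaf edge of $T'$, contradicting that $T'$ contains neither edge, already yields $a_{ijj'} = 0$ directly.
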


\begin{proof}
(1) and (5) are immediate from the definitions of the respective inequalities. For (3) observe that the coordinate $x_{ijj'}$ only appears in the second sum on the left side of the forked-tree inequality in Definition~\ref{definition:forkedTreeIJ}, from which it is clear that its coefficient is $-1$. For (2) and (4), we know that the coefficient of $x_{jj'k}$ by (1) and (3) and we note that the sign changes when we replace $x_{jj'k}$ with $1 - x_{ijj'}$.
\end{proof}

We are now ready to perform Fourier-Motzkin elimination. Given some inequalities $a \cdot x \leq b$ and $c \cdot x \leq d$ from our H-representation of $\cim_T^{I,J}$, there are 7 cases (which by Proposition~\ref{proposition:signOfFunctional} cover all possibilities of ways we can sum inequalities to cancel the $ijj'$-coordinate). The first 3 cases are from Proposition~\ref{proposition:signOfFunctional} (5) and the remaining 4 cover ways (up to symmetry in $i-j$ and $j-k$) that a positive $a_{ijj'}$ in one inequality can cancel with a negative $a_{ijj'}$ in another when we sum them. There are 3 ways these signs can differ (sum two bidirected-edge inequalities, sum two forked-tree inequalities, and sum one of each, from the appropriate sides). The distinction between cases 6 and 7 is a technical distinction needed for the proof later.
The 7 cases presented as inequalities associated to $(T,I,J)$ are the following:

\begin{enumerate}
    \item $a \cdot x \leq b$ is a star inequality.
    \item $a \cdot x \leq b$ is a bidirected-edge inequality corresponding to an edge other than $i - j$ and $j - k$.
    \item $a \cdot x \leq b$ is a forked-tree inequality with forked tree containing neither $i - j$ nor $j - k$.
    \item $a \cdot x \leq b$ and $c \cdot x \leq d$ are the bidirected-edge inequalities associated to $i - j$ and $j - k$
    \item $a \cdot x \leq b$ is a forked-tree inequality with forked tree containing $i - j$ and $c \cdot x \leq d$ is a forked-tree inequality with forked tree containing $j - k$.
    \item $a \cdot x \leq b$ is the bidirected-edge inequality associated to $i - j$, $c \cdot x \leq d$ is a forked-tree inequality with forked tree $T^\prime$  containing $i - j$, and $i \in \interior(X') \cup F(X')$ where $X' = T' \setminus \{j\}$.
    \item $a \cdot x \leq b$ is the bidirected-edge inequality associated to $i - j$, $c \cdot x \leq d$ is a forked-tree inequality with forked tree $T^\prime$ containing $i - j$, and $i \not\in \interior(X') \cup F(X')$ where $X' = T' \setminus \{j\}$.
\end{enumerate}

In this subsection we show that the 7 previous cases correspond to the following 7 collections of inequalities defined for $(X, I, J\setminus j)$, respectively:

\begin{enumerate}
    \item Star inequalities in $X$.
    \item bidirected-edge inequalities in $X$ corresponding to an edge other than $i - k$.
    \item Forked-tree inequalities with forked tree not containing $i - k$.
    \item The bidirected-edge inequality associated to $i - k$.
    \item Forked-tree inequalities with forked tree containing $i - k$.
    \item Forked-tree inequalities with forked tree containing $i$ or $k$ but not both.
    \item Extraneous inequalities.
\end{enumerate}

We briefly check the first three cases. By Proposition~\ref{proposition:signOfFunctional}, in these cases $a \cdot x \leq b$ has $a_{ijj'} = a_{jj'k} = 0$, and so by Fourier-Motzkin elimination $a \cdot x \leq b$ is valid on $\cim_X^{I, J \setminus \{j\}}$. Any star inequality, bidirected-edge inequality (other than the one associated to $i - k$), or forked-tree inequality (with forked tree containing neither $i$ nor $k$) of $\cim_X^{I, J \setminus \{j\}}$ arises in this way. We now check case 4.

\begin{lemma}
\label{lemma:bidirectedEdgeIeqProjection}
Let $a \cdot x \leq b$ and $c \cdot x \leq d$ be the bidirected-edge inequalities of $\cim_T^{I,J}$ associated to the edges $i - j$ and $j - k$. Then $(a + c) \cdot x \leq b + d$ is the bidirected-edge inequality associated to $i - k$.
\end{lemma}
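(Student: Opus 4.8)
The plan is to unwind the definitions of the two bidirected-edge inequalities being summed and show that the result, after applying the toric fiber product relation $x_{ijj'}+x_{jj'k}=1$, matches the bidirected-edge inequality associated to $i-k$ in the tree $X$. Recall that the node $j\in J$ has $N_T(j)=\{i,k\}$, and in $X$ we have replaced the path $i-j-k$ with a single edge $i-k$. The internal edge $i-k$ of $X^I$ (equivalently $X$, since $j$ has been removed) is not incident to an intervention node at either endpoint in the relevant cases, so I expect both $\ind_{k\from i}$ and $\ind_{i\from k}$ to take the summation form rather than the single-coordinate form. The goal reduces to matching coefficients coordinate-by-coordinate under the relabeling of Subsection~\ref{subsection:projection}.

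First I would write out $a\cdot x\leq b$, the bidirected-edge inequality for $i-j$, which is $\ind_{j\from i}(x)+\ind_{i\from j}(x)\leq 1$. Since $j\in J$ is a white (intervention) node, by Definition~\ref{definition:bidirectedEdgeInequality} the term $\ind_{j\from i}(x)=x_{ijj'}$, while $\ind_{i\from j}(x)$ is the alternating sum over supersets of $\{i,j\}$ within $N_T[i]$. Analogously, the $j-k$ inequality $c\cdot x\leq d$ is $\ind_{k\from j}(x)+\ind_{j\from k}(x)\leq 1$ with $\ind_{j\from k}(x)=x_{jj'k}$ and $\ind_{k\from j}(x)$ an alternating sum over supersets of $\{j,k\}$ within $N_T[k]$. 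Summing gives $b+d=2$ on the right. On the left, the two single-coordinate terms combine to $x_{ijj'}+x_{jj'k}$, which by Equation~\ref{equation:toricFiberProductEquation} equals $1$. Subtracting this constant $1$ from both sides reduces the summed inequality to $\ind_{i\from j}(x)+\ind_{k\from j}(x)\leq 1$.

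The remaining step is to identify $\ind_{i\from j}(x)+\ind_{k\from j}(x)$, after the coordinate relabeling, with $\ind_{k\from i}(x)+\ind_{i\from k}(x)$ in $X$, i.e.\ the two halves of the $i-k$ bidirected-edge inequality. Here the relabeling rules (2) and (3) of Subsection~\ref{subsection:projection} are the key bookkeeping device: a coordinate $x_A$ with $j\in A$ and $A\subseteq N_T[i]$ maps to $x_{(A\cup\{k\})\setminus\{j\}}$, and symmetrically for sets in $N_T[k]$. Under this substitution, the term $\ind_{i\from j}(x)$ — an alternating sum over $A\cup\{i,j\}$ for $\emptyset\neq A\subseteq V(\text{star at }i)\setminus\{i,j\}$ — becomes exactly the alternating sum defining $\ind_{i\from k}$ in $X$, because $j$ is replaced by $k$ as the "other endpoint" and the neighborhood of $i$ in $X$ is $(N_T(i)\setminus\{j\})\cup\{k\}$. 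The symmetric computation sends $\ind_{k\from j}(x)$ to $\ind_{k\from i}(x)$.

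The main obstacle I anticipate is purely notational rather than conceptual: I must verify that the index sets of the alternating sums line up correctly after relabeling, in particular that the empty-set exclusion $|A|\geq 1$ and the superset structure are preserved so that no spurious or missing terms appear, and that the constant offset is handled exactly once (not double-counted between the two inequalities). I would carry this out by comparing the defining sum of the $i-k$ bidirected-edge functional in $X$ against the relabeled image of the $i-j$ and $j-k$ functionals term by term, checking that each coordinate $x_B$ of $X$ receives the correct signed coefficient. Once the two alternating sums are shown to agree coordinate-wise and the constant is confirmed to reduce from $2$ to $1$ via Equation~\ref{equation:toricFiberProductEquation}, the identification is complete.
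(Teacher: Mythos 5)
Your proposal is correct and follows essentially the same route as the paper's proof: both write the two inequalities using the single-coordinate forms $x_{ijj'}$ and $x_{jj'k}$ at the white node $j$, invoke the relation $x_{ijj'}+x_{jj'k}=1$ to cancel those terms and drop the right-hand side from $2$ to $1$, and then identify the surviving sum $\ind_{i\from j}(x)+\ind_{k\from j}(x)\leq 1$ with the bidirected-edge inequality for $i-k$ via the relabeling of Subsection~\ref{subsection:projection}. The only difference is cosmetic (the paper substitutes $x_{jj'k}=1-x_{ijj'}$ into the second inequality before summing, while you sum first and subtract the constant), and your explicit coordinate-by-coordinate check of the relabeling simply spells out what the paper leaves as ``up to relabeling.''
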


\begin{proof}
The bidirected-edge inequalities associated to $i - j$ and $j - k$ are
\[\ind_{i \from j}(x) + x_{ijj'} \leq 1 \quad \mbox{and} \quad \ind_{k \from j}(x)  + x_{jj'k} \leq 1.\]
In $\cim_T^{I,J}$ we know that $x_{ijj'} + x_{jj'k} = 1$ by Corollary~\ref{corollary:hyperplane}, and so we may rewrite the latter inequality to have the following form
\[\ind_{k \from j}(x)  - x_{ijj'} \leq 0.\]
Summing these two inequalities yields the bidirected-edge inequality of $\cim_X^{I, J \setminus \{j\}}$ up to relabeling.
\end{proof}


\begin{example}
\label{ex:FMCase4}
Consider the gluing tree $(T, I, J)$ pictured on the left in Figure \ref{fig:FourierMotzkinRunningExample}. In this case, $i = 4,~ j = 5$ and $ k = 6$.   The bidirected-edge inequality associated to the edges $i-j = 4-5$ and $j-k=5-6$ are
\begin{align*}
&\ind_{4 \leftarrow 5} + \ind_{5 \leftarrow 4} = \sum\limits_{\substack{A \subseteq N_T(4) \setminus \{4,5\} \\ |A| \geq 1}} (-1)^{|A|+1}x_{A \cup \{4,5\}} + x_{455'} = x_{345} + x_{455'} \leq 1, \\
&\ind_{6 \leftarrow 5} + \ind_{5 \leftarrow 6} = \sum\limits_{\substack{A \subseteq N_T(6) \setminus \{5,6\} \\ |A| \geq 1}} (-1)^{|A|+1}x_{A \cup \{4,5\}} + x_{55'6} = (x_{567} + x_{568} - x_{5678})+ x_{55'6} \leq 1. \\\end{align*}
Recall that $x_{455'}+x_{55'6} = 1$ for all points in $\cim_T^{I,J}$ by construction. Making the substitution $x_{55'6} = 1 - x_{455'}$ in the second inequality transforms our two inequalities into
\begin{align*}
&x_{345} + x_{455'} \leq 1, \\
&(x_{567} + x_{568} - x_{5678}) + 1 - x_{455'} \leq 1 \iff (x_{567} + x_{568} - x_{5678}) -x_{455'} \leq 0.
\end{align*}
Summing these two inequalities yields the inequality 
\[
x_{345} + (x_{567} + x_{568} - x_{5678}) \leq 1. 
\]
We then relabel this inequality according to the rules described at the beginning of this section. Observe that each coordinate involving node $5$ involves either $4$ or $6$ but not both. The relabelling simply says that we replace every $5$ with a $4$ if the coordinate does not contain $4$ and a $5$ with a $6$ if the coordinate does not contain $6$ which yields the inequality
\[
x_{346} + (x_{467} + x_{468} - x_{4678}) \leq 1. 
\]
Observe that this is exactly the bidirected-edge inequality
\[
\ind_{4 \leftarrow 6} + \ind_{6 \leftarrow 4} \leq 1
\]
associated to the edge $4-6$ in the tree $X$ which is pictured on the right in Figure \ref{fig:FourierMotzkinRunningExample}. 
\end{example}

We have now handled cases 1 - 4.
It remains to consider cases 5 - 7 and obtain the forked-tree inequalities containing $i - k$ and no additional inequalities. We move on to case 5.

\begin{lemma}
\label{lemma:forkedTreeOppositeSides}
Let $a \cdot x \leq b$ and $c \cdot x \leq d$ be forked-tree inequalities of $\cim_T^{I,J}$ with forked trees containing leaves $i - j$ and $j - k$ respectively. Then $(a + c) \cdot x \leq b + d$ is a forked-tree inequality with forked tree of $(X, I, J \setminus \{j\})$ containing internal edge $i - k$. Moreover, all forked-tree inequalities with forked tree containing $i - k$ arise in this way.
\end{lemma}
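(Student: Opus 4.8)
The plan is to verify Lemma~\ref{lemma:forkedTreeOppositeSides} by direct computation using Proposition~\ref{proposition:signOfFunctional} together with the defining formula for forked-tree inequalities (Definition~\ref{def:forkedinequality}). First I would fix the two forked subtrees: let $T'_a$ be the forked tree of $a \cdot x \leq b$ containing the leaf edge $i-j$ (with $i$ an interior/forking node of $T'_a$ and $j$ its adjacent white leaf), and let $T'_c$ be the forked tree of $c \cdot x \leq d$ containing the leaf edge $j-k$. By parts (3) and (4) of Proposition~\ref{proposition:signOfFunctional} we have $a_{ijj'} = -1$ and $c_{ijj'} = +1$ (after eliminating $x_{jj'k}$ via Equation~\ref{equation:toricFiberProductEquation}), so summing these two inequalities is exactly the Fourier–Motzkin step that cancels the $ijj'$-coordinate, confirming $(a+c)\cdot x \le b+d$ is valid on $\cim_X^{I,J\setminus\{j\}}$.

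Next I would identify the combined subtree $X' := (T'_a \cup T'_c)\setminus\{j\}$, obtained by gluing $T'_a$ and $T'_c$ along the path $i-j-k$ and contracting to the edge $i-k$, and argue that $X'$ is a forked subtree of $(X,I,J\setminus\{j\})$ with $i-k$ now an interior edge. The bulk of the work is showing that the sum $(a+c)$ of the two forked-tree functionals reassembles, term by term, into the single forked-tree functional for $X'$. I would track the three pieces of Definition~\ref{def:forkedinequality} separately: the $\ind^{N_T[c]}_{c\to N_{T'}(c)}$ forcing terms over $F(T')$, the leaf-edge correction terms $\sum(1-x_{cc'd})$ over $L(T')$, and the $\#^{N_{T'}[c]}$ terms over $\interior(T')$. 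The key local calculation is at the gluing site: in $T'_a$ the node $j$ contributes a leaf-edge term $(1-x_{ijj'})$ while in $T'_c$ the node $j$ contributes $(1-x_{jj'k})$; under $x_{ijj'}+x_{jj'k}=1$ these two terms sum to $1$, which is precisely the constant that promotes $j$ from a leaf of each piece into the interior of $X'$, and I would check that the corresponding $\#$-term for the neighborhood $N_{X'}[i]$ (now including $k$) is produced correctly by the relabeling rules of Subsection~\ref{subsection:projection}.

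For the converse claim — that \emph{every} forked-tree inequality of $(X,I,J\setminus\{j\})$ whose forked tree contains the interior edge $i-k$ arises this way — I would start from such a forked tree $X'$ and reverse the construction: split $X'$ at the edge $i-k$ by reinserting $j$, producing a pair of forked trees $T'_a, T'_c$ of $(T,I,J)$ each having a white leaf at $j$. I would check that the three forkedness conditions of Definition~\ref{def:forkedinequality} for $X'$ translate into the forkedness conditions for each of $T'_a$ and $T'_c$, using that $i,k \in F(X')\cup\interior(X')$ forces the required degree drops to survive the split.

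The main obstacle I anticipate is the bookkeeping in the middle step: verifying that the $\#^{N_{T'}[c]}$ and $\ind^{N_T[c]}_{c\to N_{T'}(c)}$ contributions from $i$ in $T'_a$ and from $k$ in $T'_c$ combine—after relabeling $j$-coordinates to $i$- or $k$-coordinates—into the single $\#^{N_{X'}[i]}$ (equivalently $\ind_{i\to N_{X'}(i)}$) term for the enlarged neighborhood in $X'$. Because $\#^S$ and $\ind_{c\to L}$ are alternating-sign Möbius-type sums over subsets of the star $N[c]$, merging two stars sharing the contracted edge requires carefully matching coordinates $x_A$ where $A$ spans across the former $j$; the case split in Proposition~\ref{proposition:signOfFunctional} (6) versus (7) on whether $i \in \interior(X')\cup F(X')$ signals that this matching is delicate, and I expect the cleanest route is to evaluate both sides on an arbitrary characteristic imset $c_{\cg^\ci}$ and invoke Propositions~\ref{proposition:edgesForcedOut} and~\ref{prop:numPointCenterward} to reduce the identity of functionals to an identity of indicator/counting values, rather than manipulating the alternating sums symbolically.
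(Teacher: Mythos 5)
Your plan follows the paper's proof essentially step for step: summing the two forked-tree inequalities is the Fourier--Motzkin step, the merged tree $X' = (T'\cup T'')\setminus\{j\}$ is shown to be forked, the relation $x_{ijj'}+x_{jj'k}=1$ absorbs the two leaf terms at $j$, and reversing the deletion of $j$ gives the converse; the bookkeeping you defer is carried out in the paper by the index-set identities $F(X')=F(T')\cup F(T'')$, additivity (up to relabeling) of the interior $\#$-sums, and $L(X') = \left(L(T')\cup L(T'')\right)\setminus\{(j,i),(j,k)\}$. Two small remarks: $j$ is not ``promoted into the interior of $X'$'' --- it is deleted outright, and the constant $1$ produced by the two leaf terms instead cancels against the doubled constant $2$ obtained from summing the right-hand sides; and your fallback of verifying the functional identity by evaluating on all characteristic imsets is legitimate (agreement on every vertex gives agreement on the affine hull, the same standard the paper uses whenever it invokes the hyperplane relation), though the paper's direct index-set matching avoids having to relate parent sets across the projection.
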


\begin{proof}
Let $T'$ and $T''$ be forked subtrees of $(T,I,J)$ with $i - j$ an edge of $T'$ and $j - k$ an edge of $T''$ both with leaf node $j$. We have the following forked-tree inequalities:
\begin{equation}
\label{equation:twoForkedTrees}
\begin{split}
    \sum\limits_{c \in F(T')} \ind_{c \to N_{T'}(c)}^{N_T[c]}(x) + \sum\limits_{(c,d) \in L(T')} (1-x_{cc'd}) \leq 1 + \sum\limits_{c \in \interior(T')} \#^{N_{T'}[c]}(x), \\
    \sum\limits_{c \in F(T'')} \ind_{c \to N_{T''}(c)}^{N_T[c]}(x) + \sum\limits_{(c,d) \in L(T'')} (1-x_{cc'd}) \leq 1 + \sum\limits_{c \in \interior(T'')} \#^{N_{T''}[c]}(x).
\end{split}
\end{equation}
From $T'$ and $T''$ we form a subtree $X'$ of $X$ by taking the union of the vertex and edge sets of $T'$ and $T''$ and replacing $i - j - k$ with $i - k$. The three conditions for $X'$ to be a forked subtree of $(X,I,J \setminus \{j\})$ depend only on the adjacencies of leaf nodes of $X'$ and elements of $(I \sqcup J) \setminus \{j\}$. Note that $T'$ and $T''$ are forked subtrees of $(T,I,J)$ and for all $v \in V(T') \setminus \{j\}$ (and similarly for $T''$) we have both $\deg_T(v) = \deg_X(v)$ and $\deg_{T'}(v) = \deg_{X'}(v)$. Since the definition of forked subtrees depends only on these adjacencies, we know $X'$ is forked.

Conversely, given a forked subtree $X'$ of $(X, I, J \setminus \{j\})$ that contains $i-k$, we may reverse the deletion of $j$ and $j'$ and obtain trees $T'$ and $T''$ which combine to yield $X'$. Hence any forked tree associated to a tree $X'$ containing $i - k$ arises from combining forked trees containing $i-j$ and $j-k$.
To prove the claimed result, it remains to show that the inequalities reflect this.  

Namely, we claim that the sum of the inequalities in Equation~\ref{equation:twoForkedTrees} is the forked-tree inequality associated to $X'$. We first show that (up to relabeling)
\[\sum\limits_{c \in F(X')} \ind_{c \to N_{X'}(c)}^{N_X[c]}(x) = \sum\limits_{c \in F(T')} \ind_{c \to N_{T'}(c)}^{N_T[c]}(x) + \sum\limits_{c \in F(T'')} \ind_{c \to N_{T'}(c)}^{N_T[c]}(x).\]
To see this, we note that elements  $v \in F(T')$ (and similarly $v \in F(T'')$) have $\deg_T(v) - \deg_{T'}(v) \geq 2$ by definition. By Definition~\ref{definition:forkedTreeIJ} any $v \in J \cap F(T')$ must satisfy $\deg_T(v) \geq 2 + \deg_{T'}(v) \geq 3$, which contradicts $v \in J$. Hence $J \cap F(T') = \emptyset$. It follows that $F(X') = F(T') \cup F(T'')$ since the degrees of each vertex within $T$ and $T' \cup T''$ are the same in $X$ and $X'$ except at $j$ which does not appear in the sum.

We now show that up to relabeling,
\[\sum\limits_{c \in \interior(X')} \#^{N_{X'}[c]}(x) = \sum\limits_{c \in \interior(T')} \#^{N_{T'}[c]}(x)  + \sum\limits_{c \in \interior(T'')} \#^{N_{T''}[c]}(x).\]
Again the adjacencies of $X'$ and $T' \cup T''$ are the same except at $i,j,k$. Since $j$ is not an interior node of $X'$, $T'$, or $T''$ we may disregard this node. The degrees of $i$ and $k$ are preserved when passing from $T' \cup T''$ to $X'$ since $i - j - k$ is replaced with $i - k$. 
Hence, after relabeling, the equation holds. The sum of the inequalities in Equation~\ref{equation:twoForkedTrees} becomes
\begin{equation}
\label{equation:forkedTreeSum}
\sum\limits_{c \in F(X')} \ind_{c \to N_{X'}(c)}^{N_X[c]}(x) + \sum\limits_{(c,d) \in L(T')} (1-x_{cc'd}) + \sum\limits_{(c,d) \in L(T'')} (1-x_{cc'd}) \leq 2 + \sum\limits_{c \in \interior(X')} \#^{N_{X'}[c]}(x).
\end{equation}
Then
\begin{align*}
&\sum\limits_{(c,d) \in L(T')} (1-x_{cc'd}) + \sum\limits_{(c,d) \in L(T'')} (1-x_{cc'd}) - 1 \\
&= \sum\limits_{\substack{(c,d) \in L(T') \\ (c,d) \neq (j,i)}} (1-x_{cc'd}) + \sum\limits_{\substack{(c,d) \in L(T'') \\ (c,d) \neq (j,k)}} (1-x_{cc'd}) + (1 - x_{ijj'}) + (1 - x_{jj'k}) - 1, \\
&= \sum\limits_{\substack{(c,d) \in L(T') \\ (c,d) \neq (j,i)}} (1-x_{cc'd}) + \sum\limits_{\substack{(c,d) \in L(T'') \\ (c,d) \neq (j,k)}} (1-x_{cc'd}). \\
\end{align*}
where the last equality holds by Corollary~\ref{corollary:hyperplane}. Note that $L(X') = (L(T') \cup L(T'')) \setminus \{(j,i), (j,k)\}$ and so
\[\sum\limits_{(c,d) \in L(X')} (1-x_{cc'd}) = \sum\limits_{(c,d) \in L(T')} (1-x_{cc'd}) + \sum\limits_{(c,d) \in L(T'')} (1-x_{cc'd}) - 1.\]
Substituting this equation into Equation~\ref{equation:forkedTreeSum} yields the forked-tree inequality associated to $X'$.
\end{proof}

\begin{example}
\label{ex:FMCase5}
Again consider the gluing tree $(T, I, J)$ pictured in Figure \ref{fig:FourierMotzkinRunningExample}. Recall in this setting that $i = 4, j = 5$, and $k = 6$. Observe that the subtree $T' = 3-4-5$ is a forked tree which contains the edge $i-j = 4-5$ and the subtree $T'' = 5-6$ is a also a forked subtree but which clearly contains the edge $j-k = 5-6$. We can quickly check that $T'$ is forked by verifying it satisfies conditions (1), (2), and (3) in Definition \ref{definition:forkedTreeIJ}. For condition (1), the leaf $5 \in J$ so the condition is trivially true and for the leaf $3$ it holds that $\deg_T(3) - \deg_{T'}(3) = 3 - 1 \geq 2$. For condition (2) we have that $I \sqcup J = \{5\} \subseteq V(T')$ so this condition is trivially true and similarly for condition (3) thus $T'$ is a forked subtree. A similar check can be conducted for $T''$. 

Now recall that the inequalities associated to the forked subtree $T'$ is
\begin{align*}
\sum\limits_{c \in F(T')} \ind_{c \to N_{T'}(c)}^{N_T[c]}(x) + \sum\limits_{(c,d) \in L(T')} (1-x_{cc'd}) \leq 1 + \sum\limits_{c \in \interior(T')} \#^{N_{T'}[c]}(x)
\end{align*}
where $F(T') = \{u \in V(T'):\deg_{T}(u) - \deg_{T'}(u) \geq 2\} = \{3\}$, $L(T') = \{(4,5)\}$, and $\mathrm{int}(T') = \{4\}$. Thus the inequality above becomes
\[
(x_{123} - x_{1234}) + (1 - x_{455'}) \leq 1 + x_{345}
\]
associated to the forked tree $T'$. A similar calculation yields the inequality
\[
(x_{678} - x_{5678}) + (1-x_{55'6}) \leq 1
\]
for the forked tree $T'' = 5-6$. Now we again utilize the fact that $x_{445'} + x_{55'6} = 1$ to combine these two inequalities. Doing so yields
\begin{align*}
(x_{123} - x_{1234}) + (x_{678} - x_{5678}) \leq 1 + x_{345}.
\end{align*}
Again we apply the relabeling to get the inequality
\[
(x_{123} - x_{1234}) + (x_{678} - x_{4678}) \leq 1 + x_{346},
\]
which is exactly the inequality associated to the forked subtree $X' = 3-4-6$ of $X$. 
\end{example}

We now address Cases 6 and 7, which have identical Fourier-Motzkin steps but do not both yield facet-defining inequalities. Let $T'$ be a forked subtree of $T$ containing the edge $i - j$. In $X$ we have a subtree $X'$ associated to $T'$ which is induced on $V(T') \setminus \{j\}$. In the previous case, we considered forked trees from opposite sides of $j$ which combined to give forked trees containing $i - k$. Now we consider the operation which ``closes off'' a forked tree at $i$, so that $X'$ does not contain $i-k$. These trees arise from our remaining instance of Fourier-Motzkin elimination: combining the forked-tree inequality associated to $T'$ and the bidirected-edge inequality associated to $i - j$ to obtain the forked-tree inequality associated to $X'$. We first showcase this with an example which provides a concrete illustration of what will happen in these steps of Fourier-Motzkin elimination.

\begin{example}
Again consider the gluing tree $(T, I, J)$ pictured on the left in Figure \ref{fig:FourierMotzkinRunningExample}. Recall from Examples \ref{ex:FMCase4} and \ref{ex:FMCase5} that the bidirected-edge inequality associated to the edge 
$5-6$ is
\[
(x_{567} + x_{568} - x_{5678})+ x_{55'6} \leq 1,
\]
while the forked tree inequality associated to $T'' = 5-6$ of $T$ is
\[
(x_{678} - x_{5678}) + (1-x_{55'6}) \leq 1.
\]
Observe that adding these two inequalities together yields the inequality
\[
x_{567} + x_{568} + x_{678} - 2x_{5678} \leq 1,
\]
which after relabelling is
\[
x_{467} + x_{468} + x_{678} - 2x_{4678} \leq 1. 
\]
Note that this is exactly the forked-tree inequality associated to the lone node $6$ which is a valid, facet-defining inequality on the polytope $\cim_X$ where $X$ is the tree pictured on the right in Figure \ref{fig:FourierMotzkinRunningExample}. 
Notice also that this forked tree contains $k = 6$ but not $i = 4$. Moreover, observe that in this case, $6 \in F(X'')$ where $X'' = T'' \setminus \{5\}$. 
Thus, we are in case (6) of the Fourier-Motzkin elimination steps described at the beginning of this section

On the other hand, recall the bidirected-edge inequality associated to $4-5$ is
\[
x_{345} + x_{455'} \leq 1,
\]
while the forked-tree inequality associated to $T' = 3-4-5$ of $T$ is
\[
(x_{123} - x_{1234}) + (1 - x_{455'}) \leq 1 + x_{345}. 
\]
Observe that adding these two inequalities together yields the inequality
\[
(x_{123} - x_{1234}) \leq 1.
\]
Observe that in this case, $4 \notin \mathrm{int}(X') \cup F(X')$ where $X' = T'' \setminus \{5\}$ so we are in case (7) of the Fourier-Motzkin elimination steps described at the beginning of this section. 
Thus, we claim that this inequality is extraneous. Indeed, observe that it is simply the sum of the inequalities
\begin{align*}
x_{123} + x_{134} + x_{234} - 2x_{1234} \leq 1 \\
x_{234} - x_{1234} \geq 0 \\
x_{134} - x_{1234} \geq 0 \\
x_{1234} \geq 0.
\end{align*}
Therefore, $(x_{123} - x_{1234}) \leq 1$ is an extraneous inequality as claimed. 
\end{example}

We now complete steps (6) and (7) in full generality. To perform this Fourier-Motzkin step, we compute differences of functionals in the forked-tree inequalities associated to $T'$ and $X'$. 

\begin{proposition}
\label{proposition:differenceHashtagFunctional}
Let $T'$ be a forked subtree of $T$ containing the edge $i - j$ and let $X'$ be the associated forked subtree of $X$. Then 
\[\#^{N_{T'}[i]}(x) - \#^{N_{T'}[i] \setminus \{j\}}(x) = \sum\limits_{\substack{\{i,j\} \subseteq C \subseteq N_{T'}[i] }} (-1)^{|C|} x_C.\]
\end{proposition}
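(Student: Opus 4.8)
The statement compares two instances of the $\#$-functional from Definition~\ref{def:starhash}, one for the star on $N_{T'}[i]$ and one for the star on $N_{T'}[i]\setminus\{j\}$. Writing things out, I want to compute the difference of two alternating sums indexed by subsets containing the center $i$ of size at least $3$.

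Let me unfold the definitions and see what cancels.

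By Definition~\ref{def:starhash}, for a star with center $i$,
$$\#^{N_{T'}[i]}(x) = \sum_{\substack{i \in A \subseteq N_{T'}[i] \\ |A| \geq 3}} (-1)^{|A|-1} x_A.$$

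The second functional uses $N_{T'}[i]\setminus\{j\}$ as its vertex set, so $j$ is excluded:
$$\#^{N_{T'}[i]\setminus\{j\}}(x) = \sum_{\substack{i \in A \subseteq N_{T'}[i]\setminus\{j\} \\ |A| \geq 3}} (-1)^{|A|-1} x_A.$$

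When I subtract, every set $A$ not containing $j$ appears in both sums with identical coefficient $(-1)^{|A|-1}$ and cancels. What survives is exactly the sets $A$ with $i \in A$, $j \in A$, $A \subseteq N_{T'}[i]$, and $|A| \geq 3$:
$$\#^{N_{T'}[i]}(x) - \#^{N_{T'}[i]\setminus\{j\}}(x) = \sum_{\substack{\{i,j\} \subseteq A \subseteq N_{T'}[i] \\ |A| \geq 3}} (-1)^{|A|-1} x_A.$$

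So I'm left to reconcile the surviving sum with the claimed right-hand side. Two discrepancies appear: the sign is $(-1)^{|A|-1}$ here versus $(-1)^{|C|}$ in the claim, and the size restriction $|A|\geq 3$ is present here but absent in the claim. I need to check both are harmless.

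Let me dig into both issues.

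The sign is immediate: $(-1)^{|A|-1} = (-1)^{|A|+1} = (-1)^{|A|}\cdot(-1)$... wait, that gives $-(-1)^{|A|}$, not $(-1)^{|A|}$. Let me recompute. $(-1)^{|A|-1} = (-1)^{|A|}(-1)^{-1} = -(-1)^{|A|}$. So the signs are *opposite*. That means either I've misread the target sign, or the size restriction matters. Let me handle the size issue, since that's where a correction could flip signs.

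The claimed RHS sums over ALL $C$ with $\{i,j\}\subseteq C \subseteq N_{T'}[i]$, with no lower bound on $|C|$. The surviving terms from my subtraction have $|A|\geq 3$. The sets $C$ with $\{i,j\}\subseteq C$ and $|C|<3$ means exactly $|C|=2$, i.e. $C=\{i,j\}$. So the claimed RHS includes one extra term $(-1)^{2}x_{\{i,j\}} = x_{ij}$ that my subtraction does not. But $x_{ij}$ is a coordinate indexed by a 2-element set, and by the remark after Definition~\ref{def:characteristicimset}, such coordinates are constant ($c_\cg(S)=1$ for $|S|\le 1$, and 2-element coordinates are... actually these are genuine coordinates). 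Hmm — so I need to track this term carefully rather than dismiss it.

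**Proof plan and the real obstacle.** The plan is: first expand both $\#$ functionals via Definition~\ref{def:starhash}, then subtract term-by-term and observe that all summands indexed by sets $A$ with $j\notin A$ cancel identically. This reduces the problem to identifying the residual sum over sets containing both $i$ and $j$. The main obstacle — and the step requiring genuine care rather than bookkeeping — is reconciling the sign convention and the size threshold simultaneously. My preliminary computation suggests the residual has sign $(-1)^{|A|-1}$ over $|A|\geq 3$, whereas the target reads $(-1)^{|C|}$ over all $|C|\geq 2$. The resolution must be that the seemingly-extra $|C|=2$ term $x_{ij}$, carrying sign $(-1)^2=+1$, combines with a shift: I expect that reindexing by setting the sign as $(-1)^{|C|}$ and then verifying that the $|C|=2$ contribution is absorbed (either because $x_{ij}$ is a constant coordinate dropped under the abuse of notation preceding the lemma, or because it genuinely belongs to the functional) realigns everything. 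Concretely, I would verify that writing the survivors as $\sum_{\{i,j\}\subseteq C\subseteq N_{T'}[i]}(-1)^{|C|}x_C$ and then separately checking the $|C|=2$ term shows the two expressions agree once the $|A|=2$ (constant) coordinate is accounted for under the convention, established just before Corollary~\ref{corollary:hyperplane}, of equating coordinates up to constant/omitted terms. The honest version of this proof is just the cancellation argument plus a one-line sign/threshold check; I would present the cancellation cleanly and then carefully confirm the boundary term, since that is exactly where an off-by-one sign error would hide.
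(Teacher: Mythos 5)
Your cancellation argument is precisely the paper's own proof: it too writes out both $\#$-functionals, observes that every term indexed by a set not containing $j$ cancels in the difference, and concludes that only the sets containing both $i$ and $j$ survive. Your surviving sum,
\[
\#^{N_{T'}[i]}(x) - \#^{N_{T'}[i]\setminus\{j\}}(x) \;=\; \sum_{\substack{\{i,j\} \subseteq C \subseteq N_{T'}[i] \\ |C| \geq 3}} (-1)^{|C|-1} x_C ,
\]
is correct, and is in fact more careful than the paper's, which silently drops the $|C|\geq 3$ threshold required by Definition~\ref{def:starhash}.

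Where your proposal goes astray is only in its final paragraph: the two discrepancies you found cannot be reconciled, because the printed statement is in error, and no convention about constant or omitted coordinates can fix a global sign flip. Writing $D$ for your surviving sum, the printed right-hand side equals $x_{ij} - D$ (the $C=\{i,j\}$ term contributes $+x_{ij}$, and every $|C|\geq 3$ term enters with sign opposite to yours), so the printed identity would force $2D = x_{ij}$; at any vertex of $\cim_T^{I,J}$ the quantity $2D$ is an even integer while $x_{ij}=1$, since $i-j$ is an edge of the skeleton and hence $c_\ct(\{i,j\})=1$ for every vertex by Theorem~\ref{thm:imsetinterp}. The correct exponent is $(-1)^{|C|-1}$, exactly as you derived: that is what the paper's own two-line proof produces, it is the form actually invoked in Proposition~\ref{proposition:combineHashtagFunctionals}, and it matches the parallel Proposition~\ref{proposition:differenceForcingFunctional}, whose right-hand side the paper explicitly says resembles this one. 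As for the threshold, under Definition~\ref{def:starhash} the $C=\{i,j\}$ term is genuinely absent, as you computed; the paper's proof includes it only because it expands both $\#$-sums without the size restriction. That sloppiness turns out to be benign downstream: in Lemma~\ref{lemma:FTIPlusBDIpart1} the functional $\ind_{i \from j}(x)$ is expanded with the same unrestricted convention (by Definition~\ref{definition:bidirectedEdgeInequality} its reindexed form also starts at $|C|=3$), so the spurious constant $-x_{ij}$ enters both sides of the Fourier--Motzkin sum and cancels. So the right way to finish your proof is not to ``absorb'' anything: present the cancellation, assert the identity with $(-1)^{|C|-1}$ (restricted to $|C|\geq 3$, or unrestricted provided the same convention is applied wherever the identity is later used), and flag the printed $(-1)^{|C|}$ as a typo rather than attempt to prove it.
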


\begin{proof}
The difference on the left-hand side is
\[\sum\limits_{i \in C \subseteq N_{T'}[i]} (-1)^{|C|-1} x_C - \sum\limits_{i \in C \subseteq N_{T'}[i] \setminus \{j\}} (-1)^{|C|-1} x_C.\]
All terms in the right sum cancel leaving only terms associated to a set $C$ which contains both $i$ and $j$.
\end{proof}

Using Proposition~\ref{proposition:differenceHashtagFunctional} we now analyze the difference in the right-hand side of the forked-tree inequalities associated to $T'$ and $X'$.

\begin{proposition}
\label{proposition:combineHashtagFunctionals}
Let $T'$ be a forked subtree of $T$ containing $i - j$ and let $X'$ be obtained from $T'$ by deleting $j$ (considered as a subtree of $X$). Then up to relabeling
\[
\sum\limits_{c \in \interior(T')} \#^{N_{T'}[c]}(x) - \sum\limits_{\{i,j\} \subseteq C \subseteq N_{T'}[i]} (-1)^{|C|-1} x_C = \sum\limits_{c \in \interior(X')} \#^{N_{X'}[c]}(x).
\]
\end{proposition}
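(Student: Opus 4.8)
The plan is to reduce the identity to the single node $i$ by showing that every other closed-neighborhood star contributes identically on both sides, and then to invoke Proposition~\ref{proposition:differenceHashtagFunctional} for the local comparison at $i$.

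First I would record the structural picture. Since $T'$ contains $i-j$ as a leaf edge (by the conditions of Definition~\ref{definition:forkedTreeIJ}, every element of $I\sqcup J$ that meets $V(T')$ is a leaf of $T'$, and $j\in J$), the node $j$ is a leaf of $T'$ with $N_{T'}(j)=\{i\}$, and $X'$ is exactly $T'$ with this leaf removed. Because $j$ is adjacent only to $i$ in $T'$, for every node $c\in V(X')$ with $c\neq i$ we have $N_{T'}[c]=N_{X'}[c]$, hence $\#^{N_{T'}[c]}(x)=\#^{N_{X'}[c]}(x)$, while $N_{X'}[i]=N_{T'}[i]\setminus\{j\}$. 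Moreover deleting the leaf $j$ changes the degree of no node except $i$, so $\interior(T')$ and $\interior(X')$ can differ only at $i$, and $j$ lies in neither interior set.

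Next I would subtract the two sums term by term. Adopting the convention that $\#^{S}(x)=0$ whenever the star $S$ has fewer than three vertices (forced by the constraint $|A|\geq 3$ in Definition~\ref{def:starhash}), I can carry the summand at $i$ on both sides regardless of whether $i$ is interior: if $\deg_{T'}(i)=1$ then $T'$ is the single edge $i-j$ and both stars are trivial; if $\deg_{T'}(i)=2$ then $\#^{N_{X'}[i]}(x)=0$; and if $\deg_{T'}(i)\geq 3$ both terms are genuine. In all three cases the $c\neq i$ terms cancel and we are left with
\[\sum_{c\in\interior(T')}\#^{N_{T'}[c]}(x)-\sum_{c\in\interior(X')}\#^{N_{X'}[c]}(x)=\#^{N_{T'}[i]}(x)-\#^{N_{X'}[i]}(x).\]
This uniform treatment across the three degree cases is the one place that needs care, and I regard it as the main obstacle: one must verify that $i$ is genuinely the unique node whose star hash changes and that the size-two boundary (where a star degenerates to an edge) is absorbed correctly by the $|A|\geq 3$ convention.

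Finally I would apply Proposition~\ref{proposition:differenceHashtagFunctional} with $N_{X'}[i]=N_{T'}[i]\setminus\{j\}$, which expresses $\#^{N_{T'}[i]}(x)-\#^{N_{X'}[i]}(x)$ as a signed sum over the sets $C$ with $\{i,j\}\subseteq C\subseteq N_{T'}[i]$; this is precisely the correction functional $\sum_{\{i,j\}\subseteq C\subseteq N_{T'}[i]}(-1)^{|C|-1}x_C$ subtracted in the statement, and rearranging yields the claimed identity. The phrase ``up to relabeling'' is then routine: the only coordinates carrying the index $j$ occur in $\#^{N_{T'}[i]}(x)$ and in the correction term, and these cancel, so every coordinate surviving on either side is supported away from $j$ and carries the same label in $X$ as in $T$; applying the coordinate relabeling of Subsection~\ref{subsection:projection} therefore identifies the two sides verbatim.
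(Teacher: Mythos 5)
Your proof is correct and takes essentially the same route as the paper's: both arguments reduce the identity to the single node $i$ (using the convention that $\#^{S}(x)=0$ when the star $S$ has fewer than three vertices, so the summand at $i$ can be carried whether or not $i$ is interior), then invoke Proposition~\ref{proposition:differenceHashtagFunctional} to identify $\#^{N_{T'}[i]}(x)-\#^{N_{T'}[i]\setminus\{j\}}(x)$ with the correction functional, and finally note that every surviving coordinate avoids $j$, so the relabeling of Subsection~\ref{subsection:projection} acts as the identity. Your explicit three-way case analysis on $\deg_{T'}(i)$ and your sign convention $(-1)^{|C|-1}$ (which matches the proof of Proposition~\ref{proposition:differenceHashtagFunctional} and the way the paper actually uses it, despite the $(-1)^{|C|}$ appearing in that proposition's statement) simply make explicit what the paper's terser argument leaves implicit.
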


\begin{proof}
Note that $\#^{N_{T'}[i]}(x) = 0$ if $i$ is not an interior node of $T'$ because the sum is indexed by the empty set. By Proposition~\ref{proposition:differenceHashtagFunctional} we have
\begin{align*}
&\sum\limits_{c \in \interior(T')} \#^{N_{T'}[c]}(x) - \sum\limits_{\{i,j\} \subseteq C \subseteq N_{T'}[i]} (-1)^{|C|-1} x_C \\
&=\sum\limits_{c \in \interior(T') \setminus \{i\}} \#^{N_{T'}[c]}(x) + \#^{N_{T'}[i]}(x) - \sum\limits_{\{i,j\} \subseteq C \subseteq N_{T'}[i]} (-1)^{|C|-1} x_C, \\
&=\sum\limits_{c \in \interior(T') \setminus \{i\}} \#^{N_{T'}[c]}(x) + \#^{N_{T'}[i] \setminus \{j\}}(x).
\end{align*}
Now recall that $X'$ is obtained from $T'$ by deleting $j$. In particular, the adjacencies of $T'$ and $X'$ only differ at $i$, the unique neighbor of $j$ in $T'$. Up to relabeling, the previous expression is just
\[\sum\limits_{c \in \interior(X') \setminus \{i\}} \#^{N_{X'}[c]}(x) + \#^{N_{X'}[i]}(x) = \sum\limits_{c \in \interior(X')} \#^{N_{X'}[c]}(x).\]
\end{proof}

%

Towards analyzing the difference in the left-hand sides of the forked-tree inequalities of $T'$ and $X'$, we compute the following difference at $i$.

\begin{proposition}
\label{proposition:differenceForcingFunctional}
Let $T'$ be a forked subtree of $T$ containing the edge $i - j$. Then 
\[\ind_{i \to N_{T'}(i) \setminus \{j\}}^{N_T[i]}(x) - \ind_{i \to N_{T'}(i)}^{N_T[i]}(x) = \sum\limits_{\substack{\{i,j\} \subseteq C \subseteq N_T[i] \\ \exists \, \ell \in C \setminus N_{T'}[i]}} (-1)^{|C|-1} x_C.\]
\end{proposition}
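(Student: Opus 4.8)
The plan is to rewrite each of the two forcing functionals as a single sum indexed by the ``total'' set $C = A \cup B$, subtract them coefficient-by-coefficient, and then translate the surviving index condition into the one appearing on the right-hand side.

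First I would re-index. Writing $V = N_T[i]$ and, for a set $M \subseteq N_{T'}(i)$, unpacking Definition~\ref{def:starindicator} with $c = i$ and $L = M$, each pair $(A,B)$ appearing there is determined by its union $C = A \cup B$ via $A = C \setminus M$ and $B = C \cap M$ (these are forced since $A \subseteq V \setminus M$ and $B \subseteq M$). Since $i \notin N_{T'}(i) \supseteq M$, the constraint $i \in A$ becomes $i \in C$, the constraint $|A| \geq 3$ becomes $|C \setminus M| \geq 3$, and the weight $(-1)^{|A \cup B|-1}(|A|-2)$ becomes $(-1)^{|C|-1}(|C \setminus M| - 2)$. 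Hence
\[
\ind_{i \to M}^{N_T[i]}(x) = \sum_{\substack{i \in C \subseteq V \\ |C \setminus M| \geq 3}} (-1)^{|C|-1}\bigl(|C \setminus M| - 2\bigr)\, x_C.
\]
I would apply this with $M = N_{T'}(i) \setminus \{j\}$ and with $M = N_{T'}(i)$, noting that $j \in N_{T'}(i)$ since $i - j$ is an edge of $T'$.

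Next I would subtract term-by-term and split on whether $j \in C$. If $j \notin C$, then removing $j$ from $M$ does not change $C \setminus M$, so the two coefficients of $x_C$ are identical and cancel (either both are present, where the weights agree, or both are absent). If $j \in C$, then $C \setminus (N_{T'}(i)\setminus\{j\})$ is exactly $C \setminus N_{T'}(i)$ together with the extra element $j$, so the two relevant sizes differ by one: writing $n = |C \setminus N_{T'}(i)|$, the $M = N_{T'}(i)\setminus\{j\}$ term has size $n+1$ and weight $(-1)^{|C|-1}(n-1)$, present when $n+1 \geq 3$, while the $M = N_{T'}(i)$ term has size $n$ and weight $(-1)^{|C|-1}(n-2)$, present when $n \geq 3$. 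A short case check on $n$ shows that the difference of these two contributions equals $(-1)^{|C|-1}$ exactly when $n \geq 2$, and equals $0$ when $n \leq 1$.

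Finally I would rewrite the surviving condition. Since $i \in C$, $i \notin N_{T'}(i)$, and (in this case) $j \in C \cap N_{T'}(i)$, we have $C \setminus N_{T'}(i) = \{i\} \cup \bigl(C \setminus N_{T'}[i]\bigr)$ as a disjoint union, so $n = 1 + |C \setminus N_{T'}[i]|$ and the threshold $n \geq 2$ is precisely $C \setminus N_{T'}[i] \neq \emptyset$, i.e.\ the existence of some $\ell \in C \setminus N_{T'}[i]$. Collecting the surviving terms, all of which satisfy $\{i,j\} \subseteq C \subseteq N_T[i]$, gives exactly the claimed right-hand side. I expect the main obstacle to be the bookkeeping at the boundary size $n = 2$: there the $M = N_{T'}(i)\setminus\{j\}$ term has just crossed the $|A| \geq 3$ threshold while the $M = N_{T'}(i)$ term has already dropped out, so the two weights do not simply cancel and the surviving $(-1)^{|C|-1}$ arises from a single term rather than a genuine difference. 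This is precisely where the weight $(|A|-2)$ in Definition~\ref{def:starindicator} must be tracked with care.
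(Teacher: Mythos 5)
Your proof is correct and is essentially the paper's argument in different packaging: the paper proves this via a sign-reversing involution $\tau$ that swaps $j$ between $A$ and $B$, which is precisely your identification of the pairs $(A,B)$ sharing a common union $C$, and its leftover weight $(|A|-2)-(|A|-3)=1$ is your computation $(n-1)-(n-2)=1$. The only minor difference is at the threshold case $n=2$ (the paper's $|A|=3$ with $j\in A$), which you resolve by explicit case analysis while the paper absorbs it automatically because the would-be partner term in the second sum carries coefficient $|A|-3=0$.
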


We refer the reader back to Proposition~\ref{proposition:differenceHashtagFunctional} to note the similarity in the right-hand sides.

\begin{proof}
The left-hand side is given by the following difference:
\begin{align*}
\left( \sum\limits_{i \in A \subseteq \left( N_T[i] \setminus N_{T'}(i) \right) \cup \{j\}} \sum\limits_{B \subseteq N_{T'}(i) \setminus \{j\}} (-1)^{|A \cup B| - 1} \left(|A|-2\right) x_{A \cup B} \right) \\
- \left( \sum\limits_{i \in A \subseteq \left( N_T[i] \setminus N_{T'}(i) \right) } \sum\limits_{B \subseteq N_{T'}(i) } (-1)^{|A \cup B| - 1} \left(|A|-2\right) x_{A \cup B} \right).
\end{align*}
Per the definition of the indicator function $\ind_{i \to S}(x)$, the sets $A$ in the sums above must have cardinality at least $3$.
These double sums differ only in whether $j$ is allowed to be in $A$ or in $B$. We give a sign-reversing involution between terms of the two double sums. Define $\tau$ by the following:
\[\tau(A,B) = \begin{cases}
    (A,B) &\mbox{if}~j \not\in A ~\mbox{and}~ j \not\in B, \\
    (A \cup \{j\},B \setminus \{j\}) &\mbox{if}~j \not\in A ~\mbox{and}~ j \in B, \\
    (A \setminus \{j\},B \cup \{j\}) &\mbox{if}~j \in A ~\mbox{and}~ j \not\in B. \\
\end{cases}\]
In both sums we have $A$ and $B$ lying within two disjoint sets, and so we need not consider the case where $j \in A$ and $j \in B$. Clearly $\tau$ is an involution, and since we may only have $j \in A$ within terms of the first sum and $j \in B$ within terms of the second, $\tau$ maps between terms of the two sums. We note that $\tau$ preserves $A \cup B$, and consequently reverses sign in the difference of the two double sums. Hence $\tau$ is a sign reversing involution.

Consider the term in the first double sum given by $(A,B)$. If $j \not\in A$ and $j \not\in B$ then $\tau(A,B) = (A,B)$, and the two terms cancel. We cannot have $j \not\in A$ and $j \in B$ in the first term, so the remaining case is to consider is $j \in A$ and $j \not\in B$. In this case, the weight of $(A,B)$ in the first sum is
\[(-1)^{|A \cup B|-1} (|A|-2) x_{A \cup B}.\]
The weight of $\tau(A,B) = (A \setminus \{j\}, B \cup \{j\})$ in the second sum is
\[(-1)^{|\left(A \setminus \{j\}\right) \cup \left(B \cup \{j\}\right)| - 1} \left( |A \setminus \{j\}| - 2 \right) x_{\left(A \setminus \{j\}\right) \cup \left( B \cup \{j\}\right)} = (-1)^{|A \cup B| - 1} \left( |A| - 3 \right) x_{A  \cup  B}.\]
Hence subtracting the double sums yields exactly one instance of $(-1)^{|A \cup B|-1} x_{A \cup B}$ in the first sum and we obtain
\[
\sum\limits_{\{i,j\} \subseteq A \subseteq \left( N_T[i] \setminus N_{T'}(i) \right) \cup \{j\}} \sum\limits_{B \subseteq N_{T'}(i) \setminus \{j\}} (-1)^{|A \cup B| - 1} x_{A \cup B}.
\]
We reindex the sum with $C = A \cup B$. Since $A$ contains at least 3 vertices of $T$ that are not in $T'$, we know $C$ ranges over all subsets of $N_T[i]$ which contain $i$ and $j$ and some other $\ell$ not in $T'$. Hence we obtain
\[
\sum\limits_{\substack{\{i,j\} \subseteq C \subseteq N_T[i] \\ \exists \, \ell \in C \setminus N_{T'}[i]}} (-1)^{|C|-1} x_C,
\]
as desired.
\end{proof}

\begin{proposition}
\label{proposition:combineForcingFunctionals}
Let $T'$ be a forked subtree of $T$ containing $i - j$ and let $X'$ be obtained from $T'$ by deleting $j$. Then up to relabeling
\[
\sum\limits_{c \in F(T')} \ind_{c \to N_{T'}(c)}^{N_T[c]} + \sum\limits_{\substack{\{i,j\} \subseteq C \subseteq N_{T}[i] \\ \exists \, \ell \in C \setminus N_{T'}[i]} } (-1)^{|C|-1} x_C = \sum\limits_{c \in F(X')} \ind_{c \to N_{X'}(c)}^{N_X[c]}(x).
\]
\end{proposition}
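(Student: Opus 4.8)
The plan is to follow the template of Proposition~\ref{proposition:combineHashtagFunctionals}, using Proposition~\ref{proposition:differenceForcingFunctional} in place of Proposition~\ref{proposition:differenceHashtagFunctional}. First I would recognize the second sum on the left-hand side as the difference of forcing functionals supplied by Proposition~\ref{proposition:differenceForcingFunctional}, namely
\[
\sum_{\substack{\{i,j\} \subseteq C \subseteq N_T[i] \\ \exists\, \ell \in C \setminus N_{T'}[i]}} (-1)^{|C|-1} x_C = \ind_{i \to N_{T'}(i) \setminus \{j\}}^{N_T[i]}(x) - \ind_{i \to N_{T'}(i)}^{N_T[i]}(x).
\]
Substituting this into the claimed identity reduces the goal to showing that
\[
\sum_{c \in F(T')} \ind_{c \to N_{T'}(c)}^{N_T[c]}(x) - \ind_{i \to N_{T'}(i)}^{N_T[i]}(x) + \ind_{i \to N_{T'}(i) \setminus \{j\}}^{N_T[i]}(x) = \sum_{c \in F(X')} \ind_{c \to N_{X'}(c)}^{N_X[c]}(x).
\]

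Next I would perform the cancellation at $i$. The key observation is that $\ind_{i \to N_{T'}(i)}^{N_T[i]}(x)$ is an empty sum, hence identically zero, unless its admissible index set of sets $A$ is nonempty, i.e. unless $|N_T[i] \setminus N_{T'}(i)| = \deg_T(i) - \deg_{T'}(i) + 1 \geq 3$, which is exactly the condition $i \in F(T')$. Consequently, whether or not $i \in F(T')$, subtracting $\ind_{i \to N_{T'}(i)}^{N_T[i]}(x)$ precisely deletes the (possibly absent) $c=i$ summand from the first sum, leaving
\[
\sum_{c \in F(T') \setminus \{i\}} \ind_{c \to N_{T'}(c)}^{N_T[c]}(x) + \ind_{i \to N_{T'}(i) \setminus \{j\}}^{N_T[i]}(x).
\]

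It then remains to match this expression to the right-hand side term by term under the relabeling of Subsection~\ref{subsection:projection}. Since $X'$ is obtained from $T'$ by deleting the leaf $j$, and $j$ is adjacent in $T'$ only to $i$ (so $k \notin V(T')$), every vertex $c \neq i$ has $\deg_T(c) = \deg_X(c)$ and $\deg_{T'}(c) = \deg_{X'}(c)$; hence $F(T') \setminus \{i\} = F(X') \setminus \{i\}$, and for each such $c$ no coordinate appearing in $\ind_{c \to N_{T'}(c)}^{N_T[c]}$ contains $j$, so relabeling acts trivially and the functionals agree. For the surviving term at $i$, I would use $N_{X'}(i) = N_{T'}(i) \setminus \{j\}$ and $N_X[i] = (N_T[i] \setminus \{j\}) \cup \{k\}$ together with relabeling rule (2), which sends $x_A \mapsto x_{(A \cup \{k\}) \setminus \{j\}}$ whenever $j \in A$, to identify $\ind_{i \to N_{T'}(i) \setminus \{j\}}^{N_T[i]}(x)$ with $\ind_{i \to N_{X'}(i)}^{N_X[i]}(x)$ coordinatewise; the bijection $A \mapsto (A \setminus \{j\}) \cup \{k\}$ on index sets preserves both $|A|$ and $|A \cup B|$, so coefficients and signs are unchanged.

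The last and most delicate step is reconciling the index of summation at $i$, which I expect to be the main obstacle. Because $i$ keeps its degree in $X$ but loses one neighbor in $X'$, we have $\deg_X(i) - \deg_{X'}(i) = \deg_T(i) - \deg_{T'}(i) + 1$, so the forking threshold at $i$ shifts by one when passing from $T'$ to $X'$. I would finish by observing that $\ind_{i \to N_{X'}(i)}^{N_X[i]}(x)$ is again an empty sum, hence zero, exactly when $\deg_X(i) - \deg_{X'}(i) < 2$, i.e. exactly when $i \notin F(X')$. Therefore the surviving $i$-term is present on the left precisely when it is present on the right, and the two sides agree. This vanishing-below-threshold bookkeeping is precisely what lets a single algebraic identity cover Cases 6 and 7 uniformly, even though their resulting inequalities differ in facet-definition status.
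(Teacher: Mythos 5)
Your proposal is correct and follows essentially the same route as the paper's proof: apply Proposition~\ref{proposition:differenceForcingFunctional} to rewrite the correction sum, peel off the $c=i$ term using the vanishing of $\ind_{i \to N_{T'}(i)}^{N_T[i]}$ when $i \notin F(T')$, match the remaining terms under the relabeling, and reabsorb the $i$-term on the right via the analogous vanishing when $i \notin F(X')$. Your write-up is in fact more explicit than the paper's in two places it leaves implicit — the bijection $A \mapsto (A \setminus \{j\}) \cup \{k\}$ underlying the relabeling and the threshold computations $\deg_T(i)-\deg_{T'}(i) \geq 2$ and $\deg_X(i)-\deg_{X'}(i) \geq 2$ governing when the $i$-terms are empty sums — but the argument is the same.
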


\begin{proof}
Note that $\ind_{i \to N_{T'}(i)}^{N_T[i]} = 0$ if $i \not\in F(T')$. By Proposition~\ref{proposition:differenceForcingFunctional} the left hand side is the same as
\[
\sum\limits_{c \in F(T')} \ind_{c \to N_{T'}(c)}^{N_T[c]}(x) + \ind_{i \to N_{T'}(i) \setminus \{j\}}^{N_T[i]}(x) - \ind_{i \to N_{T'}(i)}^{N_T[i]}(x) = \sum\limits_{c \in F(T') \setminus \{i\}} \ind_{c \to N_{T'}(c)}^{N_T[c]}(x) + \ind_{i \to N_{T'}(i) \setminus \{j\}}^{N_T[i]}(x)
\]
which we simplified by peeling off the $c=i$ term of the sum (regardless of whether $i \in F(T')$ or not). Since $X'$ is obtained from $T'$ by deletion of node $j$ and leaf edge $i-j$, we know that $\deg_T(\ell) - \deg_{T'}(\ell) = \deg_X(\ell) - \deg_{X'}(\ell)$ for all vertices $\ell \neq i$ of $X'$. So $F(T')$ and $F(X')$ differ only at possibly $i$. After relabeling, we obtain
\[
\sum\limits_{c \in F(X') \setminus \{i\}} \ind_{c \to N_{X'}(c)}^{N_X[c]}(x) + \ind_{i \to N_{X'}(i)}^{N_X[i]}(x) = \sum\limits_{c \in F(X')} \ind_{c \to N_{X'}(c)}^{N_X[c]}(x).
\]
\end{proof}

We are now ready to complete the Fourier-Motzkin elimination for cases 6 and 7. 

\begin{lemma}
\label{lemma:FTIPlusBDIpart1}
Let $T'$ be a forked tree containing the edge $i - j$ and let $X' = T' \setminus \{j\}$ be the associated subtree of $X$. Let $a \cdot x \leq b$ be the forked-tree inequality associated to $T'$ and let $c \cdot x \leq d$ be the bidirected-edge inequality associated to the edge $i - j$. Then $(a + c) \cdot x \leq b + d$ simplifies to
\[
\sum\limits_{c \in F(X')} \ind_{c \to N_{X'}(c)}^{N_X[c]}(x) + \sum\limits_{(c,d) \in L(X')} (1- x_{cdd'}) \leq 1 + \sum\limits_{c \in \interior(X')} \#^{N_{X'}[c]}(x).
\]
In particular, if $X'$ is a forked subtree of $X$, then this is the forked-tree inequality associated to $X'$.
\end{lemma}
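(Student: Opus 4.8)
The plan is to write the two inequalities $a \cdot x \leq b$ and $c \cdot x \leq d$ explicitly, add them, and then recognize the sum via Propositions~\ref{proposition:combineForcingFunctionals} and~\ref{proposition:combineHashtagFunctionals}. Since $j \in J$ is an intervention node, the bidirected-edge inequality associated to $i - j$ reads
\[x_{ijj'} + \ind_{i \from j}(x) \leq 1, \qquad \ind_{i \from j}(x) = \sum_{\substack{\{i,j\} \subseteq C \subseteq N_T[i] \\ |C| \geq 3}} (-1)^{|C|-1} x_C,\]
where I have reindexed the defining sum of $\ind_{i \from j}$ by $C = A \cup \{i,j\}$. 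Because $(j,i) \in L(T')$, the forked-tree inequality associated to $T'$ contains the leaf summand $(1 - x_{ijj'})$; by Proposition~\ref{proposition:signOfFunctional} the coefficient of $x_{ijj'}$ is $-1$ here and $+1$ in the bidirected-edge inequality, so adding the two inequalities cancels the $x_{ijj'}$ coordinate. First I would perform this sum and use $(1 - x_{ijj'}) + x_{ijj'} = 1$ together with $L(X') = L(T') \setminus \{(j,i)\}$ to rewrite the combined inequality as
\[\sum_{c \in F(T')} \ind_{c \to N_{T'}(c)}^{N_T[c]}(x) + \sum_{(c,d) \in L(X')} (1 - x_{cc'd}) + \ind_{i \from j}(x) \leq 1 + \sum_{c \in \interior(T')} \#^{N_{T'}[c]}(x),\]
so that the two unit constants from the right-hand sides of $a \cdot x \leq b$ and $c \cdot x \leq d$ absorb the leaf constant to leave a single $1$.

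The key step is then to split $\ind_{i \from j}(x)$ according to whether the index set $C$ escapes $N_{T'}[i]$:
\[\ind_{i \from j}(x) = \sum_{\substack{\{i,j\} \subseteq C \subseteq N_T[i] \\ \exists\, \ell \in C \setminus N_{T'}[i]}} (-1)^{|C|-1} x_C + \sum_{\substack{\{i,j\} \subseteq C \subseteq N_{T'}[i] \\ |C| \geq 3}} (-1)^{|C|-1} x_C.\]
The first summand is exactly the correction term appearing in Proposition~\ref{proposition:combineForcingFunctionals}, so adding it to $\sum_{c \in F(T')} \ind_{c \to N_{T'}(c)}^{N_T[c]}(x)$ produces $\sum_{c \in F(X')} \ind_{c \to N_{X'}(c)}^{N_X[c]}(x)$. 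The second summand is, up to sign, the correction term in Proposition~\ref{proposition:combineHashtagFunctionals}, so moving it to the right-hand side converts $\sum_{c \in \interior(T')} \#^{N_{T'}[c]}(x)$ into $\sum_{c \in \interior(X')} \#^{N_{X'}[c]}(x)$. After the coordinate relabeling that deletes $j$, the inequality becomes
\[\sum_{c \in F(X')} \ind_{c \to N_{X'}(c)}^{N_X[c]}(x) + \sum_{(c,d) \in L(X')} (1 - x_{cc'd}) \leq 1 + \sum_{c \in \interior(X')} \#^{N_{X'}[c]}(x),\]
which by Definition~\ref{def:forkedinequality} is the forked-tree inequality associated to $X'$ whenever $X'$ is forked.

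The hard part will be the bookkeeping of constants and index ranges in this split. One must verify that both pieces carry the genuine restriction $|C| \geq 3$, in agreement with the definitions of $\#^{S}$ and $\ind^{S}_{c \to L}$, so that no spurious size-two coordinate $x_{\{i,j\}}$ survives; such a coordinate is fixed to $1$ on the affine hull since $i-j$ is an edge, and retaining it would shift the bound by one and destroy the identity. With the $|C| \geq 3$ reading the two summands of $\ind_{i \from j}(x)$ cover precisely the correction terms of the two propositions with nothing left over, and the right-hand constant comes out to exactly $1$. I would also check the degenerate subcase in which $i$ is a leaf of $T'$ (so that $\#^{N_{T'}[i]}(x) = 0$ and $i$ is a forcing rather than interior node of $X'$), as in the worked example preceding the lemma, to confirm that Propositions~\ref{proposition:combineForcingFunctionals} and~\ref{proposition:combineHashtagFunctionals} apply uniformly across the distinction between Cases~6 and~7.
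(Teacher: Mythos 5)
Your proposal is correct and follows essentially the same route as the paper's proof: sum the two inequalities, cancel $x_{ijj'}$ against the leaf term $(1-x_{ijj'})$ coming from $(j,i) \in L(T')$, split $\ind_{i \from j}(x)$ into the part with $C \subseteq N_{T'}[i]$ and the part escaping $N_{T'}[i]$, and absorb these via Propositions~\ref{proposition:combineForcingFunctionals} and~\ref{proposition:combineHashtagFunctionals} respectively. Your explicit tracking of the $|C| \geq 3$ restriction is a slightly more careful piece of bookkeeping than the paper's own write-up, but it does not change the argument.
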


\begin{proof}
We sum the forked-tree inequality associated to $T'$ and the bidirected-edge inequality associated to $i - j$. This yields
\[\sum\limits_{c \in F(T')} \ind_{c \to N_{T'}(c)}^{N_T[c]}(x) + \sum\limits_{(c,d) \in L(T')} (1- x_{cc'd}) + \ind_{i \from j}(x) + x_{ijj'} \leq 2 + \sum\limits_{c \in \interior(T')} \#^{N_{T'}[c]}(x).\]
Note that $(j,i) \in L(T')$ since $j$ was intervened on, and so we may peel off a copy of $1-x_{ijj'}$ from the sum and simplify to obtain
\[\sum\limits_{c \in F(T')} \ind_{c \to N_{T'}(c)}^{N_T[c]}(x) + \sum\limits_{(c,d) \in L(T'\setminus \{j\})} (1- x_{cc'd}) + \ind_{i \from j}(x) \leq 1 + \sum\limits_{c \in \interior(T')} \#^{N_{T'}[c]}(x).\]
We may rewrite $\ind_{i \from j}(x)$ as
\[\ind_{i \from j}(x) = \sum\limits_{\{i,j\} \subseteq C \subseteq N_{T}[i]} (-1)^{|C|-1} x_C = \sum\limits_{\{i,j\} \subseteq C \subseteq N_{T'}[i]} (-1)^{|C|-1} x_C + \sum\limits_{\substack{\{i,j\} \subseteq C \subseteq N_{T}[i] \\ \exists \, \ell \in C \setminus N_{T'}[i]} } (-1)^{|C|-1} x_C.\]
In words, we split $\ind_{i \from j}(x)$ into the terms with $C$ completely contained in $T'$ and the terms with $C$ not contained in $T'$. Substituting this into the previous inequality yields
\begin{align*}
&\sum\limits_{c \in F(T')} \ind_{c \to N_{T'}(c)}^{N_T[c]}(x) + \sum\limits_{(c,d) \in L(X')} (1- x_{cc'd}) + \sum\limits_{\{i,j\} \subseteq C \subseteq N_{T'}[i]} (-1)^{|C|-1} x_C \\
&\leq 1 + \sum\limits_{c \in \interior(T')} \#^{N_{T'}[c]}(x) - \sum\limits_{\substack{\{i,j\} \subseteq C \subseteq N_{T}[i] \\ \exists \, \ell \in C \setminus N_{T'}[i]} } (-1)^{|C|-1} x_C.
\end{align*}
By Proposition~\ref{proposition:combineHashtagFunctionals}, Proposition~\ref{proposition:combineForcingFunctionals}, and the fact that $L(X') = L(T' \setminus \{j\})$ we obtain the following inequality
\[
\sum\limits_{c \in F(X')} \ind_{c \to N_{X'}(c)}^{N_X[c]}(x) + \sum\limits_{(c,d) \in L(X')} (1- x_{cc'd}) \leq 1 + \sum\limits_{c \in \interior(X')} \#^{N_{X'}[c]}(x).
\]
\end{proof}

So far we have not assumed $X'$ to be forked, only that it is obtained from the forked tree $T'$ via deletion of $j$. We now show that cases 6 and 7 correspond to whether $X'$ is forked or not.

\begin{proposition}
\label{proposition:xPrimeForkedCase6}
Let $T'$ be a forked tree containing the edge $i-j$ and let $X' = T' \setminus \{j\}$ be the associated subtree of $X$. Then $X'$ is a forked subtree of $X$ if and only if $i \in \interior(X') \cup F(X')$. Furthermore, any forked subtree $X'$ of $X$ gives rise to a forked subtree $X' \cup \{i-j\}$ of $T$.
\end{proposition}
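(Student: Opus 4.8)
The plan is to track how the two operations producing $X'$ from $T'$ interact with the conditions defining a forked subtree in Definition~\ref{definition:forkedTreeIJ}, and to show that the only condition that can be created or destroyed is condition~(1), and only at the single node $i$.

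First I would record the structural bookkeeping. Since $j \in J$ and $T'$ is forked, condition~(3) forces $j$ to be a leaf of $T'$, so $i$ is the unique neighbour of $j$ inside $T'$, and $i$ is black because $j$ is white and a gluing tree has no two adjacent white nodes. I would also note that $k \notin V(T')$: in $T$ every $i$--$k$ path runs through $j$, yet $j$ is a leaf of $T'$, so $k$ cannot be reached from $i$ within $T'$; thus $X' = T' \setminus \{j\}$ is a subtree of $X$ containing $i$ but not the edge $i - k$. Contracting $i - j - k$ to $i - k$ leaves $\deg_X(v) = \deg_T(v)$ for all $v \neq j$, while deleting the leaf $j$ gives $\deg_{X'}(i) = \deg_{T'}(i) - 1$ and $\deg_{X'}(v) = \deg_{T'}(v)$ for every $v \in V(X') \setminus \{i\}$. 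The upshot I want to isolate is that, for every vertex of $X'$ other than $i$, its $X$-degree, its $X'$-degree, its leaf status, and its membership in $(I \sqcup J) \setminus \{j\}$ all agree with the corresponding data in $T'$.

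Second, I would use this to argue that conditions~(2) and~(3) together with the requirement that $X'$ contain no leaf of $X^I$ transfer verbatim from $T'$, since none of them imposes a new constraint at $i$: the node $i$ is black, hence $i \notin I \sqcup J$, and $i$ is not a leaf of $X^I$ because $\deg_X(i) = \deg_T(i) \geq 2$ (if $\deg_T(i) = 1$ then $i$ would be a non-intervened leaf of $T$ lying in $T'$, contradicting that $T'$ is forked). This reduces the equivalence to condition~(1) at $i$. That condition constrains $i$ only when $i$ is a leaf of $X'$, i.e.\ when $i \notin \interior(X')$, in which case it reads $\deg_X(i) - \deg_{X'}(i) \geq 2$, i.e.\ $i \in F(X')$. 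A short case split then closes the argument: if $i \in \interior(X')$ the condition at $i$ is vacuous and $X'$ is forked, and $i \in \interior(X') \cup F(X')$; if $i$ is a leaf of $X'$ then $X'$ is forked exactly when $i \in F(X')$, again matching $i \in \interior(X') \cup F(X')$. The degenerate case $T' = \{i,j\}$, where $X'$ is the single vertex $i$, is handled by the same computation with $\deg_{X'}(i) = 0$, which forces $i \in F(X')$.

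For the furthermore I would run the bookkeeping in reverse. Given a forked subtree $X'$ of $X$ containing $i$ (hence, by the remark above, not the edge $i-k$), set $T' = X' \cup \{i-j\}$, attaching $j$ as a new leaf at $i$. Every vertex other than $i$ retains its degrees, leaf status, and $I \sqcup J$ membership, so conditions~(2),~(3) and the $T^I$-leaf clause are inherited from $X'$; the only new leaf is $j \in J \subseteq I \sqcup J$, which imposes no condition~(1) constraint, while $i$ now has $\deg_{T'}(i) = \deg_{X'}(i) + 1 \geq 2$ and is therefore interior in $T'$, removing any condition~(1) obligation at $i$. Hence $T' = X' \cup \{i-j\}$ is a forked subtree of $T$, and this map inverts $T' \mapsto T' \setminus \{j\}$. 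I expect the main obstacle to be organizational rather than conceptual: carefully confirming that conditions~(2),~(3) and the ``no leaf of $X^I$'' clause genuinely transfer with no hidden requirement at $i$ (in particular the step $\deg_T(i) \geq 2$), isolating that condition~(1) at $i$ is literally $i \in F(X')$, and cleanly absorbing the degenerate single-vertex $X'$.
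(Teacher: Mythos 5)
Your proof is correct and takes essentially the same route as the paper's: both check the conditions of Definition~\ref{definition:forkedTreeIJ}, observe that conditions (2) and (3) transfer between $T'$ and $X'$ because $j \in J$ is a leaf of $T'$ not adjacent to any other node of $I \sqcup J$, reduce everything to condition (1) at the single node $i$ --- which is literally the statement $i \in \interior(X') \cup F(X')$ --- and prove the converse by attaching $j$ back as a pendant leaf at $i$. You are in fact more careful than the paper in a few places: the explicit degree bookkeeping, verifying the implicit requirement that $X'$ contains no leaf of $X^I$ (via $\deg_X(i) = \deg_T(i) \geq 2$), and the degenerate case $X' = \{i\}$.

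One flag: the parenthetical in your converse direction, that a forked subtree $X'$ of $X$ containing $i$ ``hence, by the remark above,'' does not contain the edge $i-k$, is a non sequitur. Your earlier remark concerned only those $X'$ arising as $T' \setminus \{j\}$; a general forked subtree of $X$ containing $i$ may well contain $i-k$ (these are exactly the trees handled in Lemma~\ref{lemma:forkedTreeOppositeSides}). This does not break your argument, because for $X' \cup \{i-j\}$ to be a subtree of $T$ at all one must already assume $k \notin V(X')$ (the edge $i-k$ does not exist in $T$); that hypothesis is implicit in the statement of the proposition, and is left equally implicit in the paper's own proof, but it should be assumed rather than ``deduced.''
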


\begin{proof}
Let $T'$ be a forked subtree of $T$ containing $i-j$ and let $X'$ be the subtree of $X$ obtained by removing $j$ from $T'$. We check the three conditions in Definition~\ref{definition:forkedTreeIJ}. Since $T'$ is forked, $I \sqcup J$ intersects $V(T')$ in only leaf nodes of $T'$. Deletion of $j$ preserves this property since $j$ is not adjacent to any other element of $I \sqcup J$ (by the coloring of the gluing tree). Condition (2) remains true in $X'$ since we have only deleted leaf node $j \in J$, which is not adjacent to any other element of $I \sqcup J$. 

It remains to check condition (1), which need only be checked at $i$. Note that $i \not\in I \sqcup J$. Condition (1) is vacuously true if $i \in \interior(X')$ and otherwise holds exactly when $i \in F(X') \setminus \interior(X')$. Hence $X'$ is forked if and only if $i \in \interior(X') \cup F(X')$.

For the second statement in Proposition~\ref{proposition:xPrimeForkedCase6}, we let $X'$ be a new forked subtree of $X$ and let $T'$ be the subtree of $T$ obtained from $X'$ by adding $i-j$. For Condition (3), we added a leaf node $j$ to $J$ with edge $i-j$. Since $V(X')$ only intersects $I \sqcup J \setminus \{j\}$ in leaf nodes, we see $V(T')$ only intersects $I \sqcup J$ in leaf nodes. 

For Condition (2), note that adjacencies in $X$ and $T$ (and adjacencies in $X'$ and $T'$) only differ at $i$ and $j$. Since Condition (2) holds for $X'$ in $X$, we check Condition (2) for $T'$ in $T$ only at the new node $j$. Note that $N_T[j] \cap V(T') \neq \emptyset$ and that $j \in V(T')$.
So Condition (2) holds.

For Condition (1), $j$ is the unique leaf of $T'$ which is not a leaf in $X'$. But $j \in I \sqcup J$ and so Condition (1) does not apply to this node and Condition (1) holds for $T'$ since it holds in $X'$. 
Hence, $T'$ is a forked subtree of $T$.
\end{proof}

Proposition~\ref{proposition:xPrimeForkedCase6} completes Case 6 and shows that this case is where all forked subtrees $X'$ of $X$ containing $i$ or $k$ (but not both) arise. 
We refer readers back to the two lists of 7 cases at the beginning of the subsection to see that we have now obtained all inequalities promised in the statement of Theorem~\ref{theorem:facetsOfCIMTIJ}. It remains to show the inequalities in Case 7 are extraneous. The following lemma concludes the proof of Theorem~\ref{theorem:facetsOfCIMTIJ}.

\begin{lemma}
\label{lemma:FTIPlusBDIpart2}
Let $T'$ be a forked tree containing the edge $i - j$ and let $X' = T' \setminus \{j\}$ be the associated subtree of $X$. Let $a \cdot x \leq b$ be the forked-tree inequality associated to $T'$ and let $c \cdot x \leq d$ be the bidirected-edge inequality associated to the edge $i - j$. If $X'$ is not a forked subtree of $X$, then $(a + c) \cdot x \leq b + d$ is not facet-defining.
\end{lemma}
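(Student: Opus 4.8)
The plan is to prove the statement by showing that when $X'$ is not forked, the inequality $(a+c)\cdot x \leq b+d$ is \emph{redundant}: it is a nonnegative combination of genuinely facet-defining inequalities of $\cim_X^{I,J\setminus\{j\}}$, and hence cannot itself define a facet. By Lemma~\ref{lemma:FTIPlusBDIpart1} we already know that $(a+c)\cdot x \leq b+d$ is the forked-tree-type inequality $h_{X'}(x) \leq 1$, where for any subtree $Y$ of $X$ (forked or not) I write
\[h_Y(x):=\sum_{c\in F(Y)}\ind_{c\to N_Y(c)}^{N_X[c]}(x)+\sum_{(c,d)\in L(Y)}(1-x_{cdd'})-\sum_{c\in\interior(Y)}\#^{N_Y[c]}(x).\]
I would argue by induction on the number of vertices of the subtree. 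For the structural setup, recall from Proposition~\ref{proposition:xPrimeForkedCase6} that $X'=T'\setminus\{j\}$ fails to be forked only through condition (1) of Definition~\ref{definition:forkedTreeIJ} at the node $i$; that is, $i$ is a leaf of $X'$ with $i\notin I\sqcup J$ and $\deg_X(i)-\deg_{X'}(i)\le 1$. More generally, whenever a subtree $Y$ produced in the induction is not forked, I would select such a \emph{bad leaf} $u$ (a leaf with $u\notin I\sqcup J$ and $\deg_X(u)-\deg_Y(u)\le 1$), let $w$ be its unique neighbor in $Y$, and compare $h_Y$ with $h_{Y\setminus\{u\}}$.

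The crux is the functional identity
\[h_Y(x)=h_{Y\setminus\{u\}}(x)-\sum_{S'} g_{S'}(x),\]
where the sum runs over the substars $S'$ of $X$ centered at $w$ whose leaf set contains $u$ together with at least one further neighbor of $w$, and $g_{S'}(x)\ge 0$ denotes the associated star inequality functional of Definition~\ref{def:starinequality}. Because $u\notin I\sqcup J$ and $u$ is a leaf, deleting it from $Y$ affects $h_Y$ only through the terms indexed by $w$: the forcing term $\ind_{w\to N_Y(w)}^{N_X[w]}$ and, when $w\in\interior(Y)$, the term $\#^{N_Y[w]}$. Their changes are governed by Proposition~\ref{proposition:differenceForcingFunctional} and Proposition~\ref{proposition:differenceHashtagFunctional} respectively, applied at $w$ relative to the deleted neighbor $u$ (in place of $i$ relative to $j$), each producing a signed sum over sets $C\supseteq\{w,u\}$. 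I would then show that these contributions, after accounting for the possible change in membership of $w$ in $F(\cdot)$ and $\interior(\cdot)$, reassemble exactly into the negative of a nonnegative integer combination of the star functionals $g_{S'}$ at $w$. The single computation already in the text, in which $Y=X'=\{i,\ell\}$ and the difference $-x_{134}-x_{234}+x_{1234}$ unfolds as $-(g_{\{1,3,4\}}+g_{\{2,3,4\}}+g_{\{1,2,3,4\}})$, is the prototype of this cancellation.

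With the identity established, the induction closes cleanly. If $Y\setminus\{u\}$ is forked, then $h_{Y\setminus\{u\}}(x)\le 1$ is a genuine forked-tree facet, and since every $g_{S'}\ge 0$ on $\cim_X^{I,J\setminus\{j\}}$, the identity exhibits $h_Y(x)\le 1$ as a nonnegative combination of the facet $h_{Y\setminus\{u\}}\le 1$ and the star facets $g_{S'}\ge 0$; being supported on at least two distinct, non-proportional facets, $h_Y(x)\le 1$ is not facet-defining. If $Y\setminus\{u\}$ is still not forked, it has strictly fewer vertices, so by the inductive hypothesis $h_{Y\setminus\{u\}}\le 1$ is redundant, and adjoining the further nonnegative multiples of the $g_{S'}$ keeps $h_Y\le 1$ redundant. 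Taking $Y=X'$ completes the proof.

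The main obstacle will be the sign-and-coefficient bookkeeping in the functional identity: one must verify that the two difference formulas combine into a combination of star functionals with genuinely \emph{nonnegative} coefficients across every configuration of $w$ (leaf versus interior of $Y$, and whether or not $w\in F(Y)$), that no terms outside the span of the star functionals survive, and that the mild edge case $w\in I\sqcup J$ (where the leaf edge at $w$ contributes an $L$-term) is handled separately. Confirming that conditions (2) and (3) of Definition~\ref{definition:forkedTreeIJ} persist under repeated leaf trimming, so that a bad leaf always remains available until a forked subtree is reached, is a secondary point to check.
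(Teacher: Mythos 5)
Your core identity is correct, and it gives a genuinely different certificate than the paper's. The paper does not trim one leaf at a time: it deletes the entire degree-two path $i_1=i,i_2,\dots,i_{n-1}$ in a single step, shows that the terminal tree $X''$ is forked whenever the path ends at a node $i_n$ with $\deg_X(i_n)\geq 3$, and certifies the difference of the two inequalities by nonnegativity of the path coordinates together with validity of the two differences $\#^{N_{X'}[i_n]}(x)-\#^{N_{X''}[i_n]}(x)\geq 0$ and $\ind_{i_n \to N_{X''}(i_n)}(x)-\ind_{i_n \to N_{X'}(i_n)}(x)\geq 0$, which it proves via the combinatorial interpretations in Propositions~\ref{prop:numPointCenterward} and~\ref{proposition:edgesForcedOut}. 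Your per-leaf telescoping replaces that mixed certificate by a positive combination of star inequalities: an inclusion--exclusion computation shows that the sum of the star functionals centered at $w$ over all substars containing $u$ equals
\begin{equation*}
\sum_{\substack{\{w,u\}\subseteq C\subseteq N_X[w] \\ |C|\geq 3}}(-1)^{|C|-1}x_C,
\end{equation*}
and using Propositions~\ref{proposition:differenceHashtagFunctional} and~\ref{proposition:differenceForcingFunctional} one checks, in each configuration of $w$ (interior or leaf of $Y$, in $F(Y)$ or not), that this is exactly $h_{Y\setminus\{u\}}(x)-h_Y(x)$ whenever $w\notin I\sqcup J$. So the sign-and-coefficient bookkeeping you flag as the main obstacle does work out, and your route is arguably the cleaner polyhedral argument.

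The genuine gap is the case you set aside as a ``mild edge case'': the bad leaf's neighbor $w$ lying in $I\sqcup J$. This is not a nuisance case --- it is exactly the paper's Subcase 1, where the degree-two path terminates at an intervened node rather than at a branch node, and there your machinery produces nothing: if $w\in I$ then $\deg_X(w)=1$, so there are no substars centered at $w$ at all, and the telescoping halts at the two-node inequality $1-x_{ww'u}\leq 1$ with no star functionals attached. When the path has length at least two this is still fine (at least one star functional was subtracted earlier, so the face is an intersection of two or more distinct nonnegativity-type facets, which is the paper's ``nontrivial sum'' argument). But when the path has length one, i.e.\ $X'$ is the single edge $i-i_2$ with $i_2\in I\sqcup J$ (equivalently $\deg_T(i)=2$), the Case-7 combination is the bare inequality $x_{i_2 i_2' i}\geq 0$, and this \emph{is} facet-defining, so the conclusion of the lemma actually fails there. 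Concretely, take $T$ the path $i_2-i-j-k$ with $I=\{i_2\}$ and $J=\{j\}$: then $T'=i_2-i-j$ is forked, $X'=i_2-i$ is not, and the sum of the two hypothesized inequalities reduces to $x_{i_2 i_2' i}\geq 0$; yet $\cim_X^{\{i_2\}}$ for $X=i_2-i-k$ is a $2$-simplex on three $\ci$-Markov equivalence classes whose three facets are $x_{i_2 i k}\geq 0$, $x_{i_2 i_2' i}+x_{i_2 i k}\leq 1$, and $x_{i_2 i_2' i}\geq 0$. The last facet is precisely the forked-tree inequality of Definition~\ref{definition:forkedTreeStar}, guaranteed to be facet-defining by Theorem~\ref{thm:FacetsOfStarTrees}, but it corresponds to no forked subtree under Definition~\ref{definition:forkedTreeIJ}, since condition (1) fails at $i$.

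So no ``separate handling'' can close your induction in that configuration: the statement itself is false there, and the case must either be excluded from the lemma or the resulting inequality added to the catalogue in Theorem~\ref{theorem:facetsOfCIMTIJ}. You are in good company --- the paper's own proof stumbles at the identical point, asserting in its Subcase 1 that the inequality is ``a nontrivial sum of valid inequalities'' even when $n=2$ and the sum on the right-hand side is empty --- but a proof that defers this case as mild has its hole exactly where the argument needs to do real work.
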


\begin{proof}
We first compute the degree of $i$ in $X$ and $X'$. Since $X'$ is not forked in $X$, by Proposition~\ref{proposition:xPrimeForkedCase6} we know $i \not\in \interior(X') \cup F(X')$. Since $i \not\in \interior(X')$, we know $\deg_{X'}(i) \leq 1$ and in fact equality holds. To see this, observe that $i$ is a non-leaf of $T$ (since $i$ is in a forked subtree $T'$) and hence $i$ is a nonleaf of $X$. If $i$ has no neighbor in $X'$, it must be the case that $N_X(i) \cap (I \sqcup J \setminus \{j\}) = \emptyset$ because otherwise $T'$ would not be forked. It then follows that $X' = \{i\}$ is a nonleaf singleton and a forked subtree of $X$, which contradicts our assumption.

Hence $\deg_X'(i) = 1$. We know that $i-k$ is an edge of $X$ and not $X'$. Since $i \not\in F(X')$, $\deg_X(i) - \deg_{X'}(i) \leq 1$, and by the existence of $i-k$ this holds with equality, so that $\deg_{X'}(i) = 1$ and $\deg_{X}(i) = 2$.

Let $i_1 = i$ and $N_X[i] = \{i_2,k\}$. There exists a path $i_1 - i_2 - \cdots - i_n$ of nodes in $X'$ such that $\deg_X(i_\ell) = 2$ for $\ell \in [n-1]$. Eventually this path must end at some $i_n$ with $\deg_X(i_n) \geq 3$ or $i_n \in I \cup J$ since $T'$ was a forked subtree of $T$. We consider the inequality associated to $X'$:

\begin{equation}
\label{ieq:extraneousInequality1}
\sum\limits_{c \in F(X')} \ind_{c \to N_{X'}(c)}(x) + \sum\limits_{(c,d) \in L(X')} (1- x_{cc'd}) \leq 1 + \sum\limits_{c \in \interior(X')} \#^{N_{X'}[c]}(x).    
\end{equation}

We break into the subcases where $i_n \in I \sqcup J$ or $\deg_X(i_n) \geq 3$.

\textbf{Subcase 1:} First suppose that $i_n \in I \sqcup J$. By Definition~\ref{definition:forkedTreeIJ}~(3), we know $X'$ is the path $i_1 - i_2 - \cdots - i_n$. Each of these nodes has degree at most 2 in $X$, and so $F(X') = \emptyset$. Also by Definition~\ref{definition:forkedTreeIJ} we know $L(X') = \{(i_n, i_{n-1})\}$. Then Inequality~\ref{ieq:extraneousInequality1} reduces to
\[1- x_{i_ni_n'i_{n-1}} \leq 1 + x_{i_1i_2i_3} + \cdots + x_{i_{n-2}i_{n-1}i_n}.\]
By the nonnegativity of all coordinates in $\cim_T^{I,J}$, this inequality is a nontrivial sum of valid inequalities, which is extraneous.

\textbf{Subcase 2:} Now suppose that $\deg_X(i_n) \geq 3$ (which corresponds to Figure~\ref{fig:case7trees}). Let $X''$ be the subtree of $X'$ obtained by removing $i_1,\dots,i_{n-1}$. We claim $X''$ is a forked subtree of $X$. To see this, we note that the adjacencies of nodes in $X''$ are the same in $T'$ (and they also agree in $X$ and $T$) except at $i_n$, so we need only check the conditions in Definition~\ref{definition:forkedTreeIJ} at $i_n$. For (1), if $i_n$ is a leaf of $X''$, then $\deg_X(i_n) - \deg_{X''}(i_n) \geq 2$ since $\deg_X(i_n) \geq 3$. For (2) and (3), we recall that $X''$ is a subtree of $T'$ given by deleting one node of $J$ and a path to that node. In particular, the deleted nodes intersect the neighbors (in $T$) of $J$ only in the neighbors of $j$. Since $j$ was also deleted, (2) and (3) still hold for $X''$.

\begin{figure}
    \centering
\begin{tikzpicture}
\draw[red] (0,0)--(2.2,0);
\draw[red] (2.8,0)--(4,0);
\draw (4,0)--(5,0);
\draw (5,0)--(6,0.5);
\draw (5,0)--(6,-0.5);

\draw[red] (-1,0.5)--(0,0);
\draw[red] (-1,-0.5)--(0,0);
\draw (-0.6,-1.2)--(0,0);
\draw (0.6,-1.2)--(0,0);

\draw [red] (-1,0.5)--(-1.5,0.75);
\draw [red] (-1,0.5)--(-1.5,0.25);
\draw [red] (-1,-0.5)--(-1.5,-0.25);
\draw [red] (-1,-0.5)--(-1.5,-0.75);

\node at (2.5,0) {\color{red}$\cdots$};

\node[wR] at (0,0) {}; 
\node[wR] at (1,0) {}; 
\node[wR] at (2,0) {}; 
\node[wR] at (3,0) {}; 
\node[wR] at (4,0) {}; 
\node[wB] at (5,0) {}; 

\node[wR] at (-1,0.5) {};
\node[wR] at (-1,-0.5) {};
\node[wB] at (0.6,-1.2) {};
\node[wB] at (-0.6,-1.2) {};

\node at (0,0.3) {$i_n$};
\node at (1,0.3) {$i_{n-1}$};
\node at (2,0.3) {$i_{n-2}$};
\node at (3,0.3) {$i_2$};
\node at (4,0.3) {$i=i_1$};
\node at (5,0.3) {$k$};

\draw[blue, thick] (-1.7,-0.9)--(0.5,-0.9)--(0.5,0.9)--(-1.7,0.9)--cycle;
\node at (-2,0) {\color{blue}{$X''$}};
\node at (1.5,-0.3) {\color{red}{$X'$}};
\end{tikzpicture}
    \caption{The structure of all $X$ and $X'$ considered in case 7. Here $X''$ is the largest forked subtree of $X$ that is contained in $X'$.}
    \label{fig:case7trees}
\end{figure}
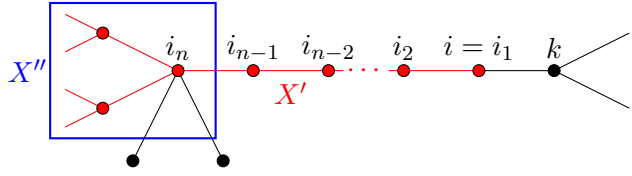

We claim that Inequality~\ref{ieq:extraneousInequality1} is the sum of the forked-tree inequality associated to $X''$ and other valid inequalities. Note that $L(X'') = L(X')$. Also since $i_1,\dots,i_{n-1}$ are degree 2 and connected in $X'$, $F(X')$ and $F(X'')$ differ only at (possibly) $i_n$. Lastly, $\interior(X')$ and $\interior(X'')$ differ at all $i_\ell$ for $\ell \in [n-1]$ and possibly at $i_n$. Then Inequality~\ref{ieq:extraneousInequality1} is equivalent to

\begin{equation}
\label{ieq:extraneousInequality2}
\begin{split}
&\sum\limits_{c \in F(X'')} \ind_{c \to N_{X''}(c)}(x) + \left( \ind_{i_n \to N_{X'}(i_n)}(x) - \ind_{i_n \to N_{X''}(i_n)}(x) \right) + \sum\limits_{(c,d) \in L(X'')} (1- x_{cc'd}) \\
&\leq 1 + \sum\limits_{c \in \interior(X'')} \#^{N_{X''}[c]}(x) + \left(\#^{N_{X'}[i_n]}(x) - \#^{N_{X''}[i_n]}(x)\right) + x_{i_1i_2i_3} + \cdots + x_{i_{n-2}i_{n-1}i_n}. 
\end{split}
\end{equation}
Note that these inequalities are equivalent since $\ind_{c \to N_{X''}(c)}(x) = 0$ whenever $c \not\in F(X'')$, and similarly for $\#^{N_{X''}[c]}(x)$. This is the sum of the valid forked-tree inequality associated to the forked tree $X''$ and the following inequalities:
\begin{align*}
&0 \leq x_{i_1i_2i_3} + \cdots + x_{i_{n-2}i_{n-1}i_n}, \\
&0 \leq \#^{N_{X'}[i_n]}(x) - \#^{N_{X''}[i_n]}(x), \\
&0 \leq \ind_{i_n \to N_{X''}(i_n)}(x) - \ind_{i_n \to N_{X'}(i_n)}(x).
\end{align*}
It remains to show that these inequalities are valid. Clearly the first inequality is valid since $\cim_X^{I,J \setminus \{j\}}$ is a 0/1 polytope. We will show the third inequality is valid since the proof of the second is similar.

For any interventional graph $\cg$, by Proposition~\ref{proposition:edgesForcedOut},
\[\ind_{i_n \to N_{X'}(i_n)}(c_{\cg}) = \begin{cases}
1 &\mbox{if}~|\pa_X(i_n)|\geq 2 ~\mbox{and}~ \pa_X(i_n) \cap N_{X'}(i_n) = \emptyset \\
0 &\mbox{otherwise.}
\end{cases}\]
 
That is, the functional evaluates to 1 if there exists a v-structure in $X$ at $i_n$ that forces the edges in $X'$ at $i_n$ to be essential and pointing away from $i_n$. The inequality in question can only be invalid on a vertex if $\ind_{i_n \to N_{X'}(i_n)}(x)$ evaluates to 1. In this case, the other functional evaluates to 1 as well, since $N_{X''}(i_n) \subset N_{X'}(i_n)$.
\end{proof}

To conclude this Section~\ref{section:eliminatingInterventions}, we briefly summarize the given argument. 
We showed that the Fourier-Motzkin elimination needed to project $\cim_T^{I,J}$ to $\cim_X^{I,J \setminus \{j\}}$ decomposed into 7 cases. 
The first three were easily seen and cases 4 and 5 were handled by Lemma~\ref{lemma:bidirectedEdgeIeqProjection} and Lemma~\ref{lemma:forkedTreeOppositeSides} respectively. 
Cases 6 and 7 were handled jointly by Lemma~\ref{lemma:FTIPlusBDIpart1}, Proposition~\ref{proposition:xPrimeForkedCase6}, and Lemma~\ref{lemma:FTIPlusBDIpart2}. 
Hence the proof of Theorem~\ref{theorem:facetsOfCIMTIJ} is complete.

\section{Applications}\label{sec:applications}

In this section, we apply the H-representations of CIM polytopes given in Theorem~\ref{thm:mainthm} to solve the linear program for which these polytopes were originally introduced; i.e., the problem of causal discovery (Problem~1.1). 
Given a real vector $\beta$ that depends only on a random sample and pre-specified modeling assumptions, we may treat the value $\beta\cdot c_\cg$ as a score assigned to the $\ci$-DAG $\cg^\ci$.  
For the sequence of intervention targets $\ci$ and a set of characteristic imsets $\mathcal{S}$, we may then compute the optimal $\ci$-MEC in $\mathcal{S}$ with respect to the score function $\beta\cdot x$ as the solution to the linear program 
\begin{equation}\label{eqn:LP}
    \begin{split}
\mbox{maximize }& \beta\cdot x\\
\mbox{subject to }& x\in \CIM_\mathcal{S}^\ci.
    \end{split}
\end{equation}

For instance, if we take $\mathcal{S} = \{c_\cg : \cg \mbox{ has skeleton $G$}\}$ and $\ci = (\emptyset)$ (i.e., $\CIM_\mathcal{S}^\ci = \CIM_G$), then the solution to the resulting linear program~\eqref{eqn:LP} will be the optimal DAG model $\cm(\cg)$ where $\cg$ has skeleton $G$.
If we replace $\ci = (\emptyset)$ with a sequence of intervention targets $\ci = (\emptyset, I_1,\ldots, I_K)$ where $I_k = \{i_k\}$ for all $k\in [K]$ and $i_1,\ldots, i_K$ is a collection of leaves in $G$, we solve the linear program with feasible region having H-representation specified by Theorem~\ref{thm:mainthm}. 
The result is then an optimal $\ci$-MEC of DAGs with skeleton $G$. 

In addition to the H-representation, implementing this causal discovery method requires a linear function $\beta\cdot x$ where $\beta$ is a vector depending only on the data and pre-specified modeling assumptions. 
We would further like that $\beta\cdot x$ is a statistically well-motivated score function for DAG models.  
A popular choice of such a function is the \emph{Bayesian Information Criterion} (BIC), which is 
\begin{equation*}
    \begin{split}
        \BIC(\cg; \dd) := \log L(\hat{\vec{\theta}} | \cg, \dd) - \frac{k\log(n)}{2},
    \end{split}
\end{equation*}
where $\hat{\vec{\theta}}$ is the MLE of the model parameters for the DAG model specified for $\cg$, $k$ is the number of free parameters in the model and $\dd$ is the random sample of size $n$. 

In this section, we establish the necessary theory for implementing this linear optimization approach to causal discovery under the assumption of Gaussian data. 
Our main contribution is a linear optimization framework for estimating causal polytree models from a mixture of observational and interventional Gaussian data. 
However, the method specializes to the case where one has access to only observational data. 

In Subsection~\ref{subsec:interventionalCIMlearn} we determine the vector $\beta$ expressing the BIC score for Gaussian $\ci$-DAG models.  
In Subsection~\ref{subsec:QIGTreeLearn}, we combine these results with Theorem~\ref{thm:mainthm} to yield a novel causal discovery algorithm for learning $\ci$-DAGs from a mixture of observational and interventional Gaussian data.

Throughout the remainder of this paper, we will use $S\in\rr^{p\times p}$ to denote a sample covariance matrix and $S_A$ to denote the submatrix of $S$ with rows and columns specified by $A\subseteq[p]$. 
The matrix $S_A^{\Phil}$ is the embedding of the matrix $S_A$ into a $p\times p$ matrix with all coordinates not given by $S_A$ being set equal to zero.
For a (positive definite) submatrix $S_A$ of a sample covariance matrix $S$, we let $P_A = S_A^{-1}$ denote the corresponding marginal sample precision matrix.

\subsection{BIC for characteristic imsets of Gaussian $\ci$-DAG models.}\label{subsec:interventionalCIMlearn}
In this subsection, we show that the BIC of a Gaussian $\ci$-DAG model $\cg^\ci$ can be computed as the inner product of the standard imset $u_{\cg^\ci}$ and a vector $\alpha$ depending only on the data and intervention targets $\ci$. 
Since $\CIM_{p, K}$ is isomorphic to $\SIM_{p,K}$ (see Subsection~\ref{subsec:interventionalCIMpolytopes}), we obtain the vector $\beta$ in~\eqref{eqn:LP} where
\begin{equation}\label{eqn:pullback}
\beta_{A} := \sum_{A\subseteq B}(-1)^{|B| - |A|}(1 - \alpha_{B}), \qquad A\subseteq[p]\cup \mathcal{Z}_\ci.
\end{equation}


Let $\ci = (I_0,\ldots, I_K)$ be a sequence of intervention targets with $I_0 = \emptyset$. 
Let $\dd = ([x_{i,j}^0], \ldots, [x_{i,j}^K])$ be a random sample from an interventional setting $(f^0,\ldots, f^K)\in\cm(\cg,\ci)$. 
In particular, $[x_{i,j}^k]$ is a random sample from $f^k\in\cm(\cg)$ for every $k$. 
For $A\subseteq \{0,\ldots, K\}$, we let $S^{\pool(A)}$ denote the sample covariance matrix of the data in $([x_{i,j}]^k : k \in A)$. 
For simplicity, we denote $S^{\pool(\{k\})}$ by $S^{(k)}$ for all $k$. 
For $i\in[p]$, we let $Z_i = \{ k\in\{0,\ldots, K\} : i\in I_k\}$ and $Z_i^c$ denote the complement of $Z_i$ in $\{0,\ldots, K\}$.
Finally, we let $\mathcal{Z}_i = \{z_k \in \mathcal{Z}_\ci : k\in Z_i\}$ and $\mathcal{Z}_i = \{z_k\in \mathcal{Z}_\ci : k\in Z_i^c\}$ denote the sets of vertices in $\mathcal{Z}_\ci$ indexed by $Z_i$ and $Z_i^c$, respectively. 

The following lemma generalizes a well-known result for DAG models $\cm(\cg)$ to the interventional setting. 
Since the proof of this lemma is mainly technical derivations, we defer it until Appendix~\ref{app:derivations}.

\begin{lemma}
    \label{lem:IMLE}
    Let $\ci = (I_0,I_1,\ldots, I_K)$ be a sequence of intervention targets with $I_0 = \emptyset$. 
Let $\dd = ([x_{i,j}^0], \ldots, [x_{i,j}^K])$ be a random sample from an interventional setting $(f^0,\ldots, f^K)\in\cm(\cg,\ci)$ where $\cm(\cg,\ci)$ is the Gaussian $\ci$-DAG model defined in Example~\ref{ex:interventionalgaussian}.
    The MLE of the covariance parameters $(\Sigma^{(0)},\ldots, \Sigma^{(K)})$ of the model is given by $(\hat\Sigma^{(0)},\ldots, \hat\Sigma^{(K)})$ where
    \begin{equation*}
    \begin{split}
    \hat K^{(k)} &= \sum_{i\notin I_k}\left(\left(P_{\fa_\cg(i)}^{\pool(Z_i^c)}\right)^{\Phil} - \left(P_{\pa_\cg(i)}^{\pool(Z_i^c)}\right)^{\Phil}\right) + \sum_{i\in I_k}\left(\left(P_{\fa_\cg(i)}^{(k)}\right)^{\Phil} - \left(P_{\pa_\cg(i)}^{(k)}\right)^{\Phil}\right)
    \end{split}
    \end{equation*}
    and $\hat K^{(k)} = \left(\hat\Sigma^{(k)}\right)^{-1}$. 
    Moreover, the MLE exists with probability one if and only if for all $k\in\{0,\ldots, K\}$, we have $n_k \geq |\fa_\cg(i)|$ for all $i\in I_k$ and $\sum_{k\in Z_i^c}n_k \geq |\fa_\cg(i)|$ for all $i\notin I_k$. 
\end{lemma}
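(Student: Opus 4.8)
The plan is to reduce the interventional MLE computation to the classical single-DAG case by exploiting the conditional factorization that defines $\cm(\cg,\ci)$. The essential building block is a Schur-complement identity for a single Gaussian conditional: if $X_i$ is regressed on $\vX_{\pa_\cg(i)}$ from a sample with covariance matrix $S$, the least-squares fit produces coefficients $\hat\lambda_i = S_{\pa_\cg(i)}^{-1}S_{\pa_\cg(i),i}$ and residual variance $\hat\omega_i = S_{ii} - S_{i,\pa_\cg(i)}S_{\pa_\cg(i)}^{-1}S_{\pa_\cg(i),i}$, and the rank-one precision contribution $\hat\omega_i^{-1}(e_i - \hat\lambda_i)(e_i - \hat\lambda_i)^T$ of this conditional equals exactly $(S_{\fa_\cg(i)}^{-1})^{\Phil} - (S_{\pa_\cg(i)}^{-1})^{\Phil}$. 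This is because inverting the family block $S_{\fa_\cg(i)}$ via the Schur complement isolating index $i$ reproduces the parent block $S_{\pa_\cg(i)}^{-1}$ together with precisely this rank-one term, which depends only on the conditional parameters $(\hat\lambda_i,\hat\omega_i)$. Thus once I identify which sample covariance governs each conditional, the corresponding summand of $\hat K^{(k)}$ is determined.

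The heart of the argument is to unfold the total log-likelihood $\ell = \sum_{k=0}^K\sum_j \log f^k(\vx^k_{:,j})$, insert the interventional factorization of each $f^k$ from Example~\ref{ex:interventionalgaussian}, and regroup the resulting conditional log-densities according to which parameter they involve. By the definition of $\cm(\cg,\ci)$, the factor $f^k_{X_i\mid \vX_{\pa_\cg(i)}}$ coincides with the observational factor $f^0_{X_i\mid \vX_{\pa_\cg(i)}}$ exactly when $i\notin I_k$, i.e. when $k\in Z_i^c$. Hence the observational conditional at node $i$ is a single shared parameter that appears in the likelihood contribution of every setting indexed by $Z_i^c$, whereas for each $k\in Z_i$ the factor $f^k_{X_i\mid\vX_{\pa_\cg(i)}}$ is a free parameter appearing only in setting $k$. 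Since the parameters of distinct conditionals are variation-independent, $\ell$ separates into a collection of independent Gaussian regression problems: for each node $i$ one regression pooling the data of all settings in $Z_i^c$ (with sample covariance $S^{\pool(Z_i^c)}$), and for each pair $(k,i)$ with $i\in I_k$ one regression using only the setting-$k$ data (with sample covariance $S^{(k)}$). Each is maximized by its least-squares fit, and its precision contribution is read off through the Schur-complement identity above.

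Assembling these fits then yields $\hat K^{(k)}$: in setting $k$ the conditional used at node $i$ is the observational one when $i\notin I_k$ and the setting-$k$ one when $i\in I_k$, so the $i$-th summand of $\hat K^{(k)}$ is $(P_{\fa_\cg(i)}^{\pool(Z_i^c)})^{\Phil} - (P_{\pa_\cg(i)}^{\pool(Z_i^c)})^{\Phil}$ in the first case and $(P_{\fa_\cg(i)}^{(k)})^{\Phil} - (P_{\pa_\cg(i)}^{(k)})^{\Phil}$ in the second, matching the claimed formula. For the existence statement I would note that each regression MLE is well-defined precisely when the relevant family submatrix of the pooling sample covariance is nonsingular; as the data are drawn from absolutely continuous distributions, the $\fa_\cg(i)$-submatrix of $S^{\pool(Z_i^c)}$ (respectively $S^{(k)}$) is positive definite with probability one if and only if the pooled sample size $\sum_{k\in Z_i^c}n_k$ (respectively $n_k$) is at least $|\fa_\cg(i)|$. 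The main obstacle is the bookkeeping of the pooling structure: one must verify carefully that the observational conditionals truly decouple across nodes and settings, so that pooling over $Z_i^c$ rather than over any single setting is the correct maximization, and that the almost-sure rank condition is governed exactly by the pooled sample size.
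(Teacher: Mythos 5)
Your proposal is correct and follows essentially the same route as the paper's proof: both factor the likelihood into node-wise Gaussian regression problems whose data are pooled over $Z_i^c$ for the shared observational conditionals and restricted to setting $k$ for intervened nodes, both invoke variation-independence of the conditional parameters to maximize each factor separately by least squares, and both settle existence by noting the relevant family submatrices are positive definite almost surely exactly under the stated sample-size conditions. The only difference is cosmetic: you assemble $\hat K^{(k)}$ via the explicit Schur-complement rank-one identity $(S_{\fa_\cg(i)}^{-1})^{\Phil} - (S_{\pa_\cg(i)}^{-1})^{\Phil} = \hat\omega_i^{-1}(e_i-\hat\lambda_i)(e_i-\hat\lambda_i)^T$, whereas the paper expresses each maximized conditional as a ratio of Gaussian densities parametrized by the corresponding sample covariances and leaves that final conversion to the precision-matrix formula implicit.
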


We now define a vector $\alpha = (\alpha_{A\sqcup \mathcal{Z}} : A\subseteq[p], \mathcal{Z}\subseteq \mathcal{Z}_\ci)\subset \rr^{2^{p + K}}$. 
For $\mathcal{Z}\subseteq \mathcal{Z}_\ci$, let $Z = \{k \in [K]: z_k \in \mathcal{Z}\}$ denote the set of indices of nodes in $\mathcal{Z}$ and let $Z^c$ denote the complement of $Z$ in $\{0,1,\ldots, K\}$. 
For $A\subseteq[p]$ and $\mathcal{Z} \subseteq \mathcal{Z}_\ci$, let
\begin{equation}\label{eqn:ialphacoords}
    \begin{split}
        \alpha_{A\sqcup \mathcal{Z}} &= -\sum_{k\in Z^c}\left(\frac{1}{2}\sum_{j=1}^{n_k}(\vx_{:,j}^k)^T\left(\left(S_{A}^{\pool(Z^c)}\right)^{-1}\right)^{\Phil}\vx_{:,j}^k + \frac{n_k}{2}\log\det S_{A}^{\pool(Z^c)}\right)\\
        &\qquad -\sum_{k\in Z}\left(\frac{1}{2}\sum_{j=1}^{n_k}(\vx_{:,j}^k)^T\left(\left(S_{A}^{(k)}\right)^{-1}\right)^{\Phil}\vx_{:,j}^k  + \frac{n_k}{2}\log\det S_{A}^{(k)}\right)
        -\frac{1}{2}\binom{|A|}{2} - \frac{1}{2}\sum_{k\in Z}\binom{|A|}{2}.
    \end{split}
\end{equation}

We then have the following theorem, whose proof is in Appendix~\ref{app:derivations}.

\begin{theorem}
    \label{thm:IBIC}
    Let $\ci = (I_0,\ldots, I_K)$ be a sequence of intervention targets where $I_k\subseteq [p]$ for all $k\in[K]$ and $I_0 = \emptyset$. 
    Let $\dd = ([x_{i,j}^k] : k\in \{0,\ldots, K\})$ be a random sample from an interventional setting $(f^0,\ldots, f^K)$.   
    The BIC for the Gaussian $\ci$-DAG model $\cm(\cg, \ci)$ is
    $
        \BIC(\cg, \ci; \dd) = C + \sum_{i=1}^p(\alpha_{\pa_\cg(i)\sqcup \mathcal{Z}_i} - \alpha_{\fa_\cg(i)\sqcup \mathcal{Z}_i}),
    $
    where
    \[
    C = -\sum_{k=0}^K\frac{pn_k}{2}\log(2\pi) - \frac{1}{2}\left(p + \sum_{k=1}^K |I_k|\right).
    \]
\end{theorem}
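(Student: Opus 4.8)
The plan is to substitute the maximum likelihood estimator from Lemma~\ref{lem:IMLE} into the definition of the BIC and then reorganize the resulting expression node-by-node. Since the $\ci$-DAG model $\cm(\cg,\ci)$ consists of a sequence of $K+1$ Gaussian distributions sharing conditional factors, the maximized log-likelihood is $\sum_{k=0}^K \log L^{(k)}$, where $\log L^{(k)}$ denotes the Gaussian log-likelihood of the $k$-th sample $[x_{i,j}^k]$ under the precision matrix $\hat K^{(k)} = (\hat\Sigma^{(k)})^{-1}$, namely
\[
\log L^{(k)} = -\frac{pn_k}{2}\log(2\pi) + \frac{n_k}{2}\log\det \hat K^{(k)} - \frac{1}{2}\sum_{j=1}^{n_k}(\vx_{:,j}^k)^T \hat K^{(k)}\vx_{:,j}^k.
\]
First I would substitute the closed form of $\hat K^{(k)}$ from Lemma~\ref{lem:IMLE}, which expresses it as a signed sum of filled family and parent marginal precision matrices, using the single-sample precision $P_{\fa_\cg(i)}^{(k)}$ for targets $i \in I_k$ and the pooled precision $P_{\fa_\cg(i)}^{\pool(Z_i^c)}$ for the remaining $i \notin I_k$.

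The key simplification is that, in a Gaussian DAG, the maximized log-likelihood decomposes node-wise: the conditional factor $X_i \mid \vX_{\pa_\cg(i)}$ is a univariate Gaussian regression whose conditional variance obeys the Schur-complement identity $\hat\sigma_i^2 = \det S_{\fa_\cg(i)}/\det S_{\pa_\cg(i)}$, so that both the log-determinant and quadratic-form contributions of node $i$ split as a difference between a term indexed by $\fa_\cg(i)$ and a term indexed by $\pa_\cg(i)$. I would carry this splitting through the filled submatrices $(P_{\fa_\cg(i)})^{\Phil}$ and $(P_{\pa_\cg(i)})^{\Phil}$ appearing in $\hat K^{(k)}$, so that both $\sum_j (\vx^k_{:,j})^T \hat K^{(k)}\vx^k_{:,j}$ and the $\log\det \hat K^{(k)}$ term become sums over $i$ of $\fa_\cg(i)$-terms minus $\pa_\cg(i)$-terms.

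Next I would interchange the order of summation, grouping by node $i$ rather than by sample $k$. For fixed $i$, the samples contributing with the pooled covariance are exactly those indexed by $Z_i^c$ while those contributing with the single-intervention covariance are indexed by $Z_i$; collecting these reproduces precisely the two sums in the definition~\eqref{eqn:ialphacoords} of $\alpha_{A\sqcup\mathcal{Z}}$ evaluated at $A = \fa_\cg(i)$ and $A = \pa_\cg(i)$ with $\mathcal{Z} = \mathcal{Z}_i$. The penalty $\tfrac{1}{2}k\log(n)$ is absorbed by matching the free-parameter count of each conditional factor to the $-\tfrac{1}{2}\binom{|A|}{2}$ and $-\tfrac{1}{2}\sum_{k\in Z}\binom{|A|}{2}$ contributions in~\eqref{eqn:ialphacoords}, again split as a difference over $\fa_\cg(i)$ and $\pa_\cg(i)$. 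All terms independent of $\cg$ — the $-\tfrac{pn_k}{2}\log(2\pi)$ constants together with the $\delta_{[p]}$ and $\delta_\emptyset$ pieces implicit in the standard imset structure — are then collected into the constant $C$, yielding $\BIC(\cg,\ci;\dd) = C + \sum_{i=1}^p(\alpha_{\pa_\cg(i)\sqcup\mathcal{Z}_i} - \alpha_{\fa_\cg(i)\sqcup\mathcal{Z}_i})$.

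The main obstacle I anticipate is the bookkeeping in the interventional setting: verifying that, after the node-wise splitting and the interchange of summation, the pooled-versus-single-sample partition of the data ($Z_i^c$ versus $Z_i$) aligns exactly with the auxiliary node set $\mathcal{Z}_i$ and the two-part structure of $\alpha_{A\sqcup\mathcal{Z}}$. In particular, one must confirm that the filled-matrix $(\cdot)^{\Phil}$ operations and the quadratic forms combine so that the conditional factor for each $i$ uses the correct marginal covariance, and that the parameter-count penalty distributes over families and parents with the correct multiplicity across the $K+1$ samples. This is precisely the technical content deferred to Appendix~\ref{app:derivations}, built on the explicit MLE of Lemma~\ref{lem:IMLE}.
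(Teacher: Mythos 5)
Your proposal is correct and follows essentially the same route as the paper's proof in Appendix~\ref{app:derivations}: substitute the MLE from Lemma~\ref{lem:IMLE} into the Gaussian log-likelihood, split both the log-determinant terms (via the DAG factorization $\det\Sigma = \prod_i \det\Sigma_{\fa_\cg(i)}/\det\Sigma_{\pa_\cg(i)}$, the paper's Lemma~\ref{lem:det}, which is your Schur-complement identity) and the quadratic forms node-wise, interchange the sums over $k$ and $i$ so that the pooled/single-sample partition $Z_i^c$ versus $Z_i$ aligns with $\mathcal{Z}_i$ in the definition of $\alpha$, match the parameter-count penalty to the binomial terms, and collect the $\cg$-independent pieces into $C$. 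The paper executes exactly this bookkeeping, so your plan reconstructs its argument.
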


Consider the CIM polytope
\[
\CIM_p^\ci = \conv(c_{\cg^\ci} : \cg \mbox{ a DAG on node set $[p]$})
\]
for a set of known intervention targets $\ci$, as defined in Section~\ref{sec:interventionalCIMpolytopes}.
Let $\alpha$ denote the vector with coordinates $\alpha_{A\sqcup \mathcal{Z}}$ as defined in~\eqref{eqn:ialphacoords}, and let $\beta$ denote the vector obtained by applying the transformation~\eqref{eqn:pullback} to $\alpha$. 
We obtain the following general result.

\begin{theorem}
    \label{thm:ICIM}
    Let $\ci = (I_0,\ldots, I_K)$ be a sequence of intervention targets where $I_k\subseteq [p]$ for all $k\in[K]$ and $I_0 = \emptyset$. 
    Let $\dd = ([x_{i,j}^k] : k\in \{0,\ldots, K\})$ be a random sample from an interventional setting $(f^0,\ldots, f^K)$ with collection of intervention targets $\ci$. 
    A BIC-optimal $\ci$-DAG $\cg^\ci$ over all Gaussian $\ci$-DAG models for the data $\dd$ is given by a solution to the linear program
    \begin{equation}\label{eqn:ILP}
    \begin{split}
        \mbox{maximize }& \beta\cdot x\\
        \mbox{subject to }& x\in \CIM_p^\ci.
    \end{split}
    \end{equation}
\end{theorem}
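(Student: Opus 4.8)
The plan is to show that $\BIC(\cg,\ci;\dd)$, viewed as a function of the $\ci$-DAG $\cg^\ci$, is an affine-linear function of the characteristic imset $c_{\cg^\ci}$ whose linear part is exactly $\beta\cdot x$, and then to invoke the fact that a linear functional attains its maximum over a polytope at a vertex. Since $\CIM_p^\ci = \conv(c_{\cg^\ci} : \cg~\mbox{a DAG on}~[p])$ by Definition~\ref{def:fixedInterventionalCIM} and its vertices are characteristic imsets of $\ci$-DAGs, any vertex solving~\eqref{eqn:ILP} will correspond to a BIC-optimal $\ci$-DAG.

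First I would rewrite the right-hand side of Theorem~\ref{thm:IBIC} as $\alpha\cdot u_{\cg^\ci}$ plus a graph-independent constant. By Definition~\ref{def:Idag} the intervention nodes $z_k$ are sources in $\cg^\ci$, so $\pa_{\cg^\ci}(z_k) = \emptyset$ and $\fa_{\cg^\ci}(z_k) = \{z_k\}$, while $\pa_{\cg^\ci}(i) = \pa_\cg(i)\sqcup\mathcal{Z}_i$ and $\fa_{\cg^\ci}(i) = \fa_\cg(i)\sqcup\mathcal{Z}_i$ for $i\in[p]$. Expanding $u_{\cg^\ci} = \delta_{[p]\cup\mathcal{Z}_\ci} - \delta_\emptyset + \sum_{i\in[p]}(\delta_{\pa_{\cg^\ci}(i)} - \delta_{\fa_{\cg^\ci}(i)}) + \sum_{k=1}^K(\delta_\emptyset - \delta_{\{z_k\}})$ and pairing against $\alpha$ gives $\alpha\cdot u_{\cg^\ci} = \sum_{i\in[p]}(\alpha_{\pa_\cg(i)\sqcup\mathcal{Z}_i} - \alpha_{\fa_\cg(i)\sqcup\mathcal{Z}_i}) + C'$, where $C' = \alpha_{[p]\cup\mathcal{Z}_\ci} - \alpha_\emptyset + \sum_{k=1}^K(\alpha_\emptyset - \alpha_{\{z_k\}})$ depends only on $\dd$ and $\ci$. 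Combined with Theorem~\ref{thm:IBIC}, this yields $\BIC(\cg,\ci;\dd) = \alpha\cdot u_{\cg^\ci} + (C - C')$.

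Next I would verify that the pullback~\eqref{eqn:pullback} converts the functional $\alpha$ on standard imsets into the functional $\beta$ on characteristic imsets, up to an additive constant. Substituting $c_{\cg^\ci}(A) = 1 - \sum_{A\subseteq T}u_{\cg^\ci}(T)$ from Definition~\ref{def:characteristicimset} and swapping the order of summation gives $\beta\cdot c_{\cg^\ci} = \sum_A\beta_A - \sum_T u_{\cg^\ci}(T)\big(\sum_{A\subseteq T}\beta_A\big)$. The defining relation~\eqref{eqn:pullback} is Möbius inversion on the Boolean lattice, so $\sum_{A\subseteq T}\beta_A = 1 - \alpha_T$; moreover every standard imset satisfies $\sum_T u_{\cg^\ci}(T) = 0$. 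Substituting these two facts collapses the expression to $\beta\cdot c_{\cg^\ci} = \alpha\cdot u_{\cg^\ci} + \sum_A\beta_A$, whence $\BIC(\cg,\ci;\dd) = \beta\cdot c_{\cg^\ci} + \kappa$ for the constant $\kappa = (C - C') - \sum_A\beta_A$ independent of $\cg$.

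It then follows that maximizing $\BIC(\cg,\ci;\dd)$ over all Gaussian $\ci$-DAG models is equivalent to maximizing the linear functional $\beta\cdot x$ over the finite set $\{c_{\cg^\ci}\}$, and hence over its convex hull $\CIM_p^\ci$. Because a linear functional attains its maximum over a polytope at a vertex, and each vertex of $\CIM_p^\ci$ is the characteristic imset of some $\ci$-DAG (equivalently some $\ci$-MEC, as $\ci$-Markov equivalent DAGs share both their imset and their model, hence their BIC), any vertex attaining the optimum of~\eqref{eqn:ILP} yields a BIC-optimal $\ci$-DAG. The only genuinely delicate step is the bookkeeping of the third paragraph: relating~\eqref{eqn:pullback} to the identity $\sum_{A\subseteq T}\beta_A = 1 - \alpha_T$ and correctly tracking the graph-independent constants $C$, $C'$, and $\sum_A\beta_A$. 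Once these are pinned down, the result reduces to the standard fact about linear optimization over polytopes.
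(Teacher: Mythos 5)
Your proposal is correct and follows essentially the same route as the paper: express the BIC as $\alpha\cdot u_{\cg^\ci}$ plus a graph-independent constant via Theorem~\ref{thm:IBIC}, transfer the linear functional from $\SIM_p^\ci$ to $\CIM_p^\ci$, and invoke vertex-optimality of linear programs over polytopes. The one substantive difference is that you carry out explicitly the step the paper compresses into the sentence ``since $\CIM_p^\ci$ and $\SIM_p^\ci$ are isomorphic, the solutions correspond,'' namely the verification that $\beta\cdot c_{\cg^\ci} = \alpha\cdot u_{\cg^\ci} + \mathrm{const}$; in doing so you have implicitly corrected what appears to be a typo in~\eqref{eqn:pullback}: your key identity $\sum_{A\subseteq T}\beta_A = 1-\alpha_T$ holds only if the pullback is the subset sum $\beta_A = \sum_{B\subseteq A}(-1)^{|A|-|B|}(1-\alpha_B)$, whereas the printed formula sums over supersets $B\supseteq A$ (under which a direct check on $p=2$, comparing the empty DAG with $1\to 2$, shows $\beta\cdot c_{\cg}-\alpha\cdot u_{\cg}$ is not constant, so the theorem itself requires the subset-sum reading).
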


\begin{proof}
    Consider the standard imset associated to the $\ci$-DAG $\cg^\ci$:
    \[
    u_{\cg^\ci} = \delta_{[p]\sqcup \mathcal{Z}_\ci} - \delta_{\emptyset\sqcup \emptyset} + \sum_{i = 1}^p\left(\delta_{\pa_{\cg^\ci}(i)} - \delta_{\fa_{\cg^\ci}(i)}\right), 
    \]
    Note first that $\alpha_{[p]\sqcup \mathcal{Z}_\ci} - \alpha_{\emptyset \sqcup \emptyset}$ is a constant that depends only on the data and set of intervention targets $\ci$, both of which are fixed for a given instance of the problem. 
    Now let $A_{\fa(i)} = \fa_{\cg^\ci}(i) \cap [p]$, $\mathcal{Z}_{\fa(i)} = \fa_{\cg^\ci}(i) \cap \mathcal{Z}_\ci$, $A_{\pa(i)} = \pa_{\cg^\ci}(i)\cap [p]$ and $\mathcal{Z}_{\pa(i)} = \pa_{\cg^\ci}(i)\cap \mathcal{Z}_\ci$. 
    Note that $\mathcal{Z}_{\fa(i)} = \mathcal{Z}_{\pa(i)}$ by the definition of parent and family sets. 
    In particular, this is the set $\mathcal{Z}_i$.
    It follows from the definition of the standard imset $u_{\cg^\ci}$ and the vector $\alpha$ that 
    \begin{equation*}
        \begin{split}
            \alpha\cdot u_{\cg^\ci}
            &= \alpha_{[p]\sqcup \mathcal{Z}_\ci} - \alpha_{\emptyset\sqcup \emptyset} + \sum_{i = 1}^p\left(\alpha_{\pa_{\cg^\ci}(i)} - \alpha_{\fa_{\cg^\ci}(i)}\right),\\
            &= \alpha_{[p]\sqcup \mathcal{Z}_\ci} - \alpha_{\emptyset\sqcup \emptyset} + \BIC(\cg, \ci; \dd) - C, 
        \end{split}
    \end{equation*}
    where $C$ is the constant in Theorem~\ref{thm:IBIC} that depends only on the data and the pre-specified intervention targets. 
    Hence, $\alpha\cdot u_{\cg^\ci}$ is maximized over all $u_{\cg^\ci}$ at a BIC-optimal $\ci$-DAG. 
    It follows that a BIC-optimal $\ci$-DAG is a vertex solution to 
    \begin{equation}\label{eqn:simILP}
        \begin{split}
            \mbox{maximize }& \alpha \cdot x\\
            \mbox{subject to }& x\in \SIM_p^\ci.
        \end{split}
    \end{equation}
    Since $\CIM_p^\ci$ and $\SIM_p^\ci$ are isomorphic via the map in Definition~\ref{def:characteristicimset}, the $\ci$-MECs corresponding to solutions to~\eqref{eqn:ILP} and~\eqref{eqn:simILP} will be identical. 
\end{proof}

\subsection{QIGTreeLearn.}\label{subsec:QIGTreeLearn}
In this section, we present an algorithm following from 
Theorems~\ref{thm:mainthm} and~\ref{thm:ICIM} 
for learning an interventional Gaussian DAG model $(\cg, \ci)$ where $\cg$ has skeleton $G$ a tree and the nonempty targets in $\ci$ are singletons, each containing a leaf node of $G$. 
The learned DAG $\cg$ is known as a \emph{polytree} in the computer science literature.
We note that the problem of learning a polytree representation of a distribution from data is of interest, as seen in  \cite{jakobsen2022structure, linusson2022edges, rebane2013recovery}. 
The assumption of known intervention targets that are singletons is also well-accepted in applied fields such as biology, where in many instances it is possible to formulate experiments that target only specific variables of interest. 
These methods include, for instance, the use of CRISPR-Cas9 technology for specific single-gene knockout experiments \cite{dixit2016perturb}, and specific perturbations in protein-signaling networks induced by certain reagents \cite{sachs2005causal}. 
An example applying our method to one such data set from a biological study is presented in Subsection~\ref{subsec:realexps}.

Given Theorem~\ref{thm:mainthm}, we have the facets of the feasible region $\CIM_G^\ci$ of the linear program in Theorem~\ref{thm:ICIM} whenever $G$ is a tree and the nonempty targets in $\ci$ are singletons, each containing a leaf node of $G$. 
Hence, in the following we will assume that we have a random sample $\dd = ([x_{ij}^k] : k\in \{0,\ldots, K\})$ from an interventional setting $(f^0,\ldots, f^K)$ with set of known intervention targets $\ci = \{\emptyset, I_1,\ldots, I_K\}$ where $I_k$ is a singleton for all $k\in[K]$. 

To learn a model $(\cg,\ci)$ fulfilling the additional constraints of our theorems, it is necessary to first identify a skeleton for the model that is a tree. 
We use the same method for identifying an optimal skeleton that is a tree as used in \cite{jakobsen2022structure, linusson2022edges, rebane2013recovery}.
Namely, for each pair $i,j\in[p]$ we compute the negative mutual information $-I(X_i; X_j)$ and assign it as an edge weight to a complete graph on node set $[p]$. 
We then apply Kruskal's algorithm for identifying a minimum weight spanning tree $\hat G$ of this weighted complete graph. 
Once the skeleton $\hat G$ is estimated, we determine its leaf nodes and remove from consideration any interventional data sets that do not target a leaf.

The result is a skeleton $\hat G$, a sequence of interventional targets $\ci = (\emptyset, \{k_1\},\ldots, \{k_m\})$ with $k_1,\ldots, k_m$ leaves of $\hat G$ and a random sample $\dd = ([x_{i,j}^k] : k\in\{0,\ldots, K\})$.
We solve the linear program in Theorem~\ref{thm:ICIM} with feasible region restricted to $\CIM_{\hat G}^\ci$, returning a characteristic imset $c_{\cg^\ci}^\ast$ for an $\ci$-DAG $(\cg,\ci)$ where the skeleton of $\cg$ is equal to $\hat G$. 
A representative of the $\ci$-Markov equivalence class may be recovered from the coordinates of $c_{\cg^\ci}^\ast$. 
The method is summarized in Algorithm~\ref{alg:QIGTreeLearn}, and an implementation is available at \url{https://github.com/soluslab/causalCIM/tree/master/QIG}.
\begin{algorithm}
  \caption{QIGTreeLearn}
  \label{alg:QIGTreeLearn}
  \raggedright
  \hspace*{\algorithmicindent} \textbf{Input:} a sequence of intervention targets $\ci = (\emptyset, \{i_1\},\ldots, \{i_K\})$.\\
  \hspace*{\algorithmicindent} \textbf{Input:} a random sample $\mathbb{D} = ([x_{i,j}^k] : k\in\{0,\ldots, K\})$.\\
  \hspace*{\algorithmicindent} \textbf{Output:} An $\ci$-DAG $(\cg, \ci)$ where $\cg$ is a polytree.
  \begin{algorithmic}[1]
    \State $K_p \gets$ {a weighted complete graph on $[p]$ where $i - j$ is assigned the weight $-I(X_i;X_j)$.}
    \State $\hat G = ([p],E) \gets \mbox{a minimum weight spanning tree of $K_p$}$
    \State $L \gets \{j\in[p]: \deg_{\hat G}(j) = 1\}$
    \State $\ci \gets \{\emptyset\}\cup\{\{i_k\} : i_k\in L\}$
    \State $\beta \gets \mbox{CIM data vector for $\hat G$ and $\ci$}$
    \State $c_{\cg^\ci}^\ast \gets \text{argmax}(\beta\cdot x : x\in\CIM_{\hat G}^\ci)$
    \State \Return $c_{\cg^\ci}^\ast$
  \end{algorithmic}
\end{algorithm}

\begin{remark}
    \label{rmk: whynotalltrees?}
An tentative reader may wonder if it would be best to instead have a completely linear optimization-based method; e.g., solving the linear program with feasible region $\CIM_{\mathcal{S}}^\ci$ where $\mathcal{S}$ is the collection of characteristic imsets for any DAG whose skeleton is a tree, as opposed to first estimating the skeleton. 
However, according to computations in low dimensions, the polytope $\CIM_{\mathcal{S}}^\ci$ has more facets than vertices, meaning that any computation that uses all facets would touch more objects than a method that touches each MEC exactly once. 
In this way, QIGTreeLearn (Algorithm~\ref{alg:QIGTreeLearn}) presents a more efficient approach in regards to time complexity. 
\end{remark}

In our implementation of Algorithm~\ref{alg:QIGTreeLearn}, we solve the linear program using a standard interior-point method solver from \texttt{scikit-learn}. 
We note, however, that any linear optimization method using an H-representation of the feasible region can be substituted into this step.
Notice that setting $\ci = (\emptyset)$ solves the problem for observational data only. 

Finally, we remark that it is possible to learn models where intervention targets are not limited to leaf nodes, so long as we assume that such interventions occur at degree two nodes in the skeleton $T$ where there is no v-structure in the data-generating causal DAG. 
This is accomplished by replacing $\CIM_T^{\ci}$ with $\CIM_T^{I, J}$ for $J \neq \emptyset$.
Doing so uses the full power of Theorem~\ref{theorem:facetsOfCIMTIJ}, whereas the current method only uses the special case of this theorem summarized in Theorem~\ref{thm:mainthm}. 
However, we are not currently aware of any studies whose assumptions allow for the somewhat strange restrictions on the imsets whose convex hull forms $\CIM_T^{I, J}$ when $J$ is nonempty.

\subsection{Real data example.}\label{subsec:realexps}
We applied Algorithm~\ref{alg:QIGTreeLearn} to the protein mass spectrometry dataset of Sachs et al. \cite{sachs2005causal} consisting of 7644 measurements of the abundances of certain phospholipids and phosphoproteins in primary human immune system cells under different experimental conditions. 
The experimental conditions were generated by inhibiting or activating different proteins or receptor enzymes in the protein-signaling network through the use of various reagents. 
As described in \cite{sachs2005causal}, nine different reagents were used to produce the data, and the molecules perturbed by each reagent are known. 
In particular, several reagents induce \emph{specific perturbations} \cite{sachs2005causal}, targeting exactly one (known) molecule in the network. 
The data set is purely interventional (e.g., having no random sample from the unperturbed distribution). 
However, several of the reagents used are known to target only receptor enzymes and none of the signaling molecules being measured. 
In \cite{wang2017permutation}, the authors used this observation to extract an observational data set consisting of the $1755$ samples in which only receptor enzymes are targeted and the induced perturbations are identical. 
Following the set-up of \cite{wang2017permutation}, we consider the interventional data produced from five specific perturbations at signaling molecules. 
The intervention targets and the number of samples from each experiment are cataloged in the following table.

\begin{center}
    \smallskip
    
    \begin{tabular}{| c | c | c | c | c | c | c |}\hline
    intervention target & $\emptyset$ & Akt & PKC & PIP2 & Mek & PIP3 \\\hline
    sample size & 1755 & 911 & 723 & 810 & 799 & 848\\\hline
    \end{tabular}
    \smallskip
    
\end{center}

The skeleton $\hat G$ learned for the model by Algorithm~\ref{alg:QIGTreeLearn} is presented in Figure~\ref{fig:sachsskel}. 
We see from this figure that two of the five intervention targets are leaf nodes in the skeleton. 
Hence, Algorithm~\ref{alg:QIGTreeLearn} discards the data from the other three (nonempty) intervention targets and uses the remaining 3465 samples to estimate an $\ci$-DAG with skeleton $\hat G$ for the intervention targets $\ci = (I_0 = \emptyset, I_1 = \{\textrm{Akt}\}, I_2 = \{\textrm{Mek}\})$.
The resulting characteristic imset is
\begin{center}
    \smallskip

    \begin{tabular}{| l | l | l | l | l | l | l | l | l | }\hline
    $S$ & 014 \hspace{1pt} & 01$z_2$ & 234 & 034 \hspace{10pt} & 04(10)  & 34(10) \hspace{2pt} & 034(10) & 567  \\\hline
    $c_{\cg^{\ci}}(S)$ & 0 & 0 & 0 & 0 & 0 & 0 & 0 & 0 \\\hline

    \end{tabular}

    \hspace{-26pt}
    \begin{tabular}{| l | l | l | l | l | l | l | l | }\hline
    $S$ & 56$z_1$ & 578 \hspace{1pt} & 789 & 78(10) & 89(10) & 789(10) & 48(10) \hspace{2pt} \\\hline
    $c_{\cg^{\ci}}(S)$ & 1 & 0 & 1 & 0 & 0 & 0 & 1 \\\hline

    \end{tabular}
    \smallskip
\end{center}
where $z_1$ denotes the intervention at Akt, $z_2$ denotes the intervention at Mek and the protein molecules are indexed as
\begin{center}
    \smallskip

    \begin{tabular}{| c | c | c | c | c | c | c | c | c | c | c | c | c |}\hline
    index & 0 & 1 & 2 & 3 & 4 & 5 & 6 & 7 & 8 & 9 & 10\\\hline
    molecule & Raf & Mek & PLCg & PIP2 & PIP3 & Erk & Akt & PKA & PKC & p38 & Jnk\\\hline
    \end{tabular}
    \smallskip
\end{center} 
The corresponding $\ci$-DAG is presented in Figure~\ref{fig:sachstree}. 
We see that the optimal characteristic imset $c_{\cg^\ci}^\ast$ returned by Algorithm~\ref{alg:QIGTreeLearn} has coordinates $c_{\cg^\ci}(\{z_1, \textrm{Akt}, \textrm{Erk}\}) = 1$ and $c_{\cg^\ci}(\{z_2, \textrm{Mek}, \textrm{Raf}\}) = 0$.
Since the edge direction $z_1 \rightarrow \textrm{Akt}$ is essential in the $\ci$-DAG, we see that the first coordinate determines the direction of the edge $\textrm{Erk} \rightarrow \textrm{Akt}$ whose direction would otherwise be undetermined in the Markov equivalence class of $\cg$. 
Similarly, the second coordinate value determines the directions of the edge $\textrm{Mek}\rightarrow \textrm{Raf}$.  
Moreover, since the learned characteristic imset $c_{\cg^\ci}^\ast$ also satisfies 
\[
c_{\cg^\ci}(\{\textrm{Mek}, \textrm{Raf}, \textrm{PIP3}\}) = c_{\cg^\ci}(\{\textrm{Raf}, \textrm{PIP3}, \textrm{PIP2}\}) = c_{\cg^\ci}(\{\textrm{PIP3}, \textrm{PIP2}, \textrm{PCLg}\}) = 0, 
\]
the edges
$
\textrm{Raf} \rightarrow \textrm{PIP3}, \, \textrm{PIP3} \rightarrow \textrm{PIP2}, \, \mbox{and} \, \textrm{PIP2} \rightarrow \textrm{PCLg}
$
are also essential.

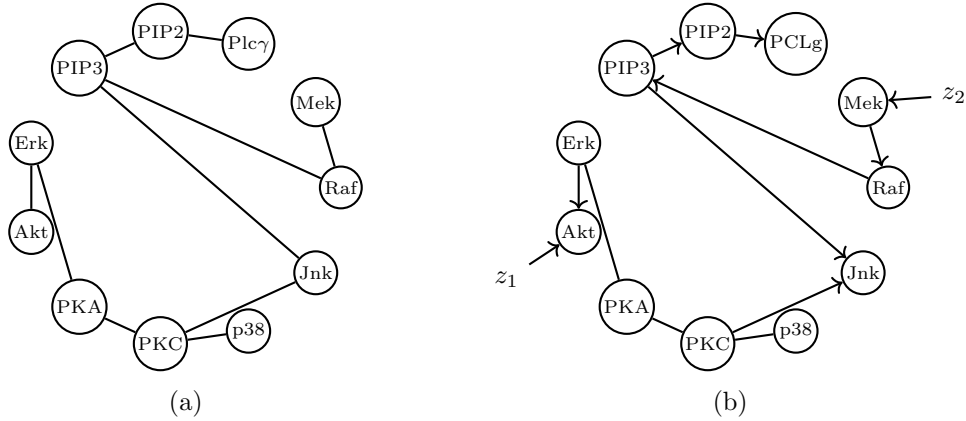
\begin{figure}
\begin{subfigure}[b]{0.4\textwidth}
    \centering
    \begin{tikzpicture}[thick, scale=0.175]

    \def \n {11}
    \def \radius {12cm}
    \def \margin {3} 

  \node[circle, draw, inner sep=1pt, minimum width=1pt] (PIP2) at ({360/\n * (3)}:\radius) {\tiny PIP2};
  \node[circle, draw, inner sep=1pt, minimum width=1pt] (Plcg) at ({360/\n * (2)}:\radius) {\tiny Plc$\gamma$};
  \node[circle, draw, inner sep=1pt, minimum width=1pt] (Mek) at ({360/\n * (1)}:\radius) {\tiny Mek};
  \node[circle, draw, inner sep=1pt, minimum width=1pt] (Raf) at ({360/\n * (11)}:\radius) {\tiny Raf};
  \node[circle, draw, inner sep=1pt, minimum width=1pt] (Jnk) at ({360/\n * (10)}:\radius) {\tiny Jnk};
  \node[circle, draw, inner sep=1pt, minimum width=1pt] (p38) at ({360/\n * (9)}:\radius) {\tiny p38};
  \node[circle, draw, inner sep=1pt, minimum width=1pt] (PKC) at ({360/\n * (8)}:\radius) {\tiny PKC};
  \node[circle, draw, inner sep=1pt, minimum width=1pt] (PKA) at ({360/\n * (7)}:\radius) {\tiny PKA};
  \node[circle, draw, inner sep=1pt, minimum width=1pt] (Akt) at ({360/\n * (6)}:\radius) {\tiny Akt};
  \node[circle, draw, inner sep=1pt, minimum width=1pt] (Erk) at ({360/\n * (5)}:\radius) {\tiny Erk};
  \node[circle, draw, inner sep=1pt, minimum width=1pt] (PIP3) at ({360/\n * (4)}:\radius) {\tiny PIP3};

  \draw[-] (Raf) -- (Mek) ;
  \draw[-] (Raf) -- (PIP3) ;
  \draw[-] (PIP3) -- (PIP2) ;
  \draw[-] (PIP2) -- (Plcg) ;
  \draw[-] (PIP3) -- (Jnk) ;
  \draw[-] (PKC) -- (Jnk) ;
  \draw[-] (PKC) -- (p38) ;
  \draw[-] (PKC) -- (PKA) ;
  \draw[-] (PKA) -- (Erk) ;
  \draw[-] (Erk) -- (Akt) ;

    \end{tikzpicture}
    \caption{}
    \label{fig:sachsskel}
    \end{subfigure}
    \hspace{0.15in}
     \begin{subfigure}[b]{0.4\textwidth}
         \centering
         \begin{tikzpicture}[thick, scale=0.175]

\def \n {11}
\def \radius {12cm}
\def \margin {3} 

  \node[circle, draw, inner sep=1pt, minimum width=1pt] (PIP2) at ({360/\n * (3)}:\radius) {\tiny PIP2};
  \node[circle, draw, inner sep=1pt, minimum width=1pt] (Plcg) at ({360/\n * (2)}:\radius) {\tiny PCLg};
  \node[circle, draw, inner sep=1pt, minimum width=1pt] (Mek) at ({360/\n * (1)}:\radius) {\tiny Mek};
  \node[circle, draw, inner sep=1pt, minimum width=1pt] (Raf) at ({360/\n * (11)}:\radius) {\tiny Raf};
  \node[circle, draw, inner sep=1pt, minimum width=1pt] (Jnk) at ({360/\n * (10)}:\radius) {\tiny Jnk};
  \node[circle, draw, inner sep=1pt, minimum width=1pt] (p38) at ({360/\n * (9)}:\radius) {\tiny p38};
  \node[circle, draw, inner sep=1pt, minimum width=1pt] (PKC) at ({360/\n * (8)}:\radius) {\tiny PKC};
  \node[circle, draw, inner sep=1pt, minimum width=1pt] (PKA) at ({360/\n * (7)}:\radius) {\tiny PKA};
  \node[circle, draw, inner sep=1pt, minimum width=1pt] (Akt) at ({360/\n * (6)}:\radius) {\tiny Akt};
  \node[circle, draw, inner sep=1pt, minimum width=1pt] (Erk) at ({360/\n * (5)}:\radius) {\tiny Erk};
  \node[circle, draw, inner sep=1pt, minimum width=1pt] (PIP3) at ({360/\n * (4)}:\radius) {\tiny PIP3};

  \node (iAkt) at (-17,-7) {\small $z_1$};
  \node (iMek) at (17,7) {\small $z_2$};

  \draw[<-] (Raf) -- (Mek) ;
  \draw[->] (Raf) -- (PIP3) ;
  \draw[->] (PIP3) -- (PIP2) ;
  \draw[->] (PIP2) -- (Plcg) ;
  \draw[->] (PIP3) -- (Jnk) ;
  \draw[->] (PKC) -- (Jnk) ;
  \draw[-] (PKC) -- (p38) ;
  \draw[-] (PKC) -- (PKA) ;
  \draw[-] (PKA) -- (Erk) ;
  \draw[->] (Erk) -- (Akt) ;

  \draw[->] (iAkt) -- (Akt) ;
  \draw[->] (iMek) -- (Mek) ;

\end{tikzpicture}
\caption{}
\label{fig:sachstree}
     \end{subfigure}
\caption{(a) The estimated skeleton for the Sachs et al.~\cite{sachs2005causal} data learned by Algorithm~\ref{alg:QIGTreeLearn}. (b) The learned $\ci$-essential graph of the $\ci$-Markov equivalence class of $\ci$-DAGs. The three undirected edges can be oriented in four ways without producing new v-structures. These four DAGs constitute the $\ci$-Markov equivalence class.}
\end{figure}

In summary, the interventional data allows us to deduce the causal direction of five arrows in the DAG whose directions would have been undetermined from the observational data alone. 
By Theorem~\ref{thm:Iverma}, this leaves only three edges in the $\ci$-Markov equivalence class undetermined, meaning we have learned the underlying causal model up to four possible graphs, as described in the caption of Figure~\ref{fig:sachstree}.
Finally, we note that additional data from a specific perturbation at p38 would allow us to either learn the causal graph entirely or reduce the $\ci$-equivalence class to only three possible graphs.

\section{Discussion and Future Work}\label{sec:futurework}
The results in this article provide hyperplane descriptions for the family of CIM polytopes given by the convex hull of all characteristic imsets for MECs of DAGs having a fixed skeleton that is a tree. 
The results are obtained by generalizing CIM polytopes to include models that accommodate experimental data. 
The main theorem (Theorem~\ref{thm:mainthm}) provides a hyperplane description of a family of interventional CIM polytopes.
The identified hyperplane descriptions and the newly introduced interventional CIM polytopes provide new methods for learning causal polytree structures from a combination of observational and interventional data, as shown in Section~\ref{sec:applications}. 
The corresponding data vector is also derived for general Gaussian $\ci$-DAG models (Theorem~\ref{thm:IBIC}), setting the stage for the expansion of linear programming methods to larger families of models. 

The derivation of Theorem~\ref{thm:mainthm} relies on a mixture classic techniques, e.g. Fourier-Motzkin elimination, and the observation that combinatorial interpretations of well-placed intervention targets realizes the CIM polytopes of interest as projections of a toric fiber product whose hyperplane description is easier to compute. 
This is the first time either of these techniques have been brought to bear on this particular optimization problem. The hyperplane descriptions obtained in Theorem~\ref{thm:mainthm} are easily phrased in terms of the combinatorics of directed graphs.
It would be reasonable to investigate if these same techniques admit broader applications to other families of (interventional) CIM polytopes. 
An interesting example to consider is the chordal graph polytope \cite{studeny2017towards} 
Another interesting case would be to generalize Theorem~\ref{theorem:facetsOfCIMTIJ} to include interventional targets at internal nodes of the skeleton with no restrictions on node degree or incident edge direction.

\section*{Acknowledgements}
We thank Steffen Lauritzen for helpful discussions.
Liam Solus was partially supported by the Wallenberg Autonomous Systems and Software Program (WASP) funded by the Knut and Alice Wallenberg Foundation, the Digital Futures Lab at KTH, the G\"oran Gustafsson Stiftelse Prize for Young Researchers, and Starting Grant No. 2019-05195 from The Swedish Research Council (Vetenskapsr\aa{}det).
Joseph Johnson was partially support by the latter grant. Benjamin Hollering was supported by the Alexander von Humboldt Foundation.

\bibliography{refs.bib}{}
\bibliographystyle{plain}

\appendix

\section{Derivations for results in Subsection~\ref{subsec:interventionalCIMlearn}}
\label{app:derivations}

\begin{proof}[Proof of Lemma~\ref{lem:IMLE}]
    Let $\vec{\theta} = ((\vec{\lambda}^k,\vec{\omega}^k) : k \in\{0,\ldots, K\})$ denote the vector of model parameters defining the interventional setting. 
    The likelihood function for $\vec{\theta}$ given the data $\dd$ is then
    \begin{equation*}
        \begin{split}
            L(\vec{\theta} | \cg, \dd)
            &= \prod_{k=0}^K\prod_{j=1}^{n_k} f^k(\vx_{:,j}^k | \vec{\theta}).
        \end{split}
    \end{equation*}
    Since $f^k\in\cm(\cg)$ and $\omega_i^k = \omega_i^0$ and $\Lambda_{\pa_\cg(i),i}^k = \Lambda_{\pa_\cg(i),i}^0$ whenever $i\notin I_k$, we have that
    \begin{equation*}
        \begin{split}
            f^k(\vx_{:,j}^k | \vec{\theta}) 
            &= \prod_{i=1}^p f^k(x_{i,j}^k | \vx_{\pa_\cg(i),j}^k, \Lambda_{\pa_\cg(i),i}^k, \omega_i^k),\\
            &= \prod_{i\notin I_k}f^0(x_{i,j}^k | \vx_{\pa_\cg(i),j}^k, \Lambda_{\pa_\cg(i),i}^0, \omega_i^0)\prod_{i\in I_k}f^k(x_{i,j}^k | \vx_{\pa_\cg(i),j}^k, \Lambda_{\pa_\cg(i),i}^k, \omega_i^k).
        \end{split}
    \end{equation*}
    It follows that
    \begin{equation}\label{eqn:ilikelihood}
        \begin{split}
            L(\vec{\theta} | \cg, \dd)
            &= \left(\prod_{i=1}^p\prod_{k : i\notin I_k}\prod_{j=1}^{n_k} f^0(x_{i,j}^k | \vx_{\pa_\cg(i),j}^k, \Lambda_{\pa_\cg(i),i}^0,\omega_i^0) \right)\\
            &\qquad \times \left(\prod_{k=1}^K \prod_{i\in I_k}\prod_{j=1}^{n_k}f^k(x_{i,j}^k | \vx_{\pa_\cg(i),j}^k, \Lambda_{\pa_\cg(i),i}^k, \omega_i^k)\right).
        \end{split}
    \end{equation}
    For $i\in[p]$, the $i$-th product in the left product is the likelihood function $L(\Lambda_{\pa_\cg(i),i}^0, \omega_i^0 | ([x_{i,j}^k] : k\in Z_i^c)$. Similarly, the $k$-th product in the right product is the likelihood function $L(\Lambda_{\pa_\cg(i),i}^k,\omega_i^k | [x_{i,j}^k])$. Since the parameter space is a product space and the likelihood function factors according to this product space, the MLE is obtained by maximizing each of these factors separately.  
    
    Let $\vY = [Y_0, Y_1,\ldots, Y_m]^T$ denote a collection of Gaussian random variables. 
    Suppose that we have a random sample $y_{0,1},\ldots, y_{0,n}$ and for each $y_{0,i}$ we have the observations $y_{[m], i}$, for which we assume a simple linear regression model
    \[
    Y_{0,i} \sim \textrm{N}(\Lambda^Ty_{[m],i}, \sigma^2), 
    \]
    with regression coefficients $\Lambda = [\lambda_1,\ldots, \lambda_m]^T$.  
    Results from basic regression tell us that 
    \[
    \hat\Lambda = S_{0,[m]}S_{[m],[m]}^{-1} \qquad \mbox{and} \qquad \hat \sigma^2 = S_{0,0} - S_{0,[m]}S_{[m],[m]}^{-1}S_{[m],0},
    \]
    where $S = (1/n)\sum_{i=1}^n\vy_i\vy_i^T$ for the paired samples $\vy = [y_{0,i},y_{1,i},\ldots,y_{m,i}]^T$. 
    Since $(\hat\Lambda, \hat\sigma^2)$ is the MLE of the parameters $(\Lambda, \sigma^2)$ given the data, we have that the likelihood function
    \[
    L(\Lambda, \sigma^2 | \vy) = \prod_{i=1}^nf(y_{0,i} | \vy_{[m],i}, \Lambda, \sigma^2) 
    \]
    is maximized at $(\hat\Lambda, \hat\sigma^2)$. 
    Since $(\hat\Lambda, \hat\sigma^2)$ are also the parameters for the conditional distribution $Y_0 | \vY_{[m]} = \vy_{[m]}$, the conditional density function for this distribution with parameters set equal to the MLE is 
    \begin{equation}
        \label{eqn: max at marg sampcov}
        f_{Y_0 | Y_{[m]}}(y_0 | \vy_{[m]}) = \frac{f_{\vY}(\vy | S)}{f_{\vY_{[m]}}(\vy_{[m]} | S_{[m],[m]})}, 
    \end{equation}
    where $f_{\vY}(\vy | S)$ and $f_{\vY_{[m]}}(\vy_{[m]} | S_{[m],[m]})$ are, respectively the densities for $\vY$ and $\vY_{[m]}$ parametrized in the covariance parameters. 
    

    Applying this to each factor in our factorized likelihood above, we obtain that each factor used in the definition of the interventional setting $(f^0,\ldots, f^K)$ maximizes for the parameter choices
    \[
    f^0(x_i | x_{\pa_\cg(i)}, \Lambda_{\pa_\cg(i)}^0, \omega_i^0) = \frac{f^0\left(\vx_{\fa_\cg(i)} | S^{\pool(Z_i^c)}_{\fa_\cg(i)}\right)}{f^0\left(\vx_{\pa_\cg(i)} | S^{\pool(Z_i^c)}_{\pa_\cg(i)}\right)},
    \]
    for all $i\in[p]$, and 
    \[
    f^k(x_i | x_{\pa_\cg(i)}, \Lambda_{\pa_\cg(i)}^k, \omega_i^k) = \frac{f^k\left(\vx_{\fa_\cg(i)} | S^{(k)}_{\fa_\cg(i)}\right)}{f^k\left(\vx_{\pa_\cg(i)} | S^{(k)}_{\pa_\cg(i)}\right)},
    \]
    for all $k\in[K]$ and each $i\in I_k$.

    Finally, to see the claim on the existence of the MLE, notice that \eqref{eqn: max at marg sampcov} implies, for each $k\in\{0,\ldots, K\}$, that the MLE $\hat\Sigma^{(k)}$ exists if and only if the MLEs for each of the relevant factors above exist.  
    This happens if and only if the corresponding matrices $S_{\fa_\cg(i)}^{\pool(Z_i^c)}$ and $S_{\fa_\cg(i)}^{(k)}$ are positive definite. 
    The latter condition happens almost surely if and only if $n_k \geq |\fa_\cg(i)|$ for all $i\in I_k$ and $\sum_{k\in Z_i^c}n_k \geq |\fa_\cg(i)|$ for all $i\notin I_k$. 
\end{proof}

To prove Theorem~\ref{thm:IBIC}, we will use the following lemma, whose proof is well-known.

\begin{lemma}
    \label{lem:det}
    Let $\Sigma$ be the covariance matrix of a Gaussian distribution belonging to the DAG model $\cm(\cg)$ for some DAG $\cg$. 
    Then
    \[
    \det\Sigma = \prod_{i=1}^p \frac{\det \Sigma_{\fa_\cg(i)}}{\det \Sigma_{\pa_\cg(i)}}.
    \]
\end{lemma}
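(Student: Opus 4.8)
The plan is to reduce the global determinant $\det\Sigma$ to a product of \emph{local} error variances, one per node, and then identify each such variance with the Schur-complement ratio $\det\Sigma_{\fa_\cg(i)}/\det\Sigma_{\pa_\cg(i)}$.

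First I would invoke the parametrization from Example~\ref{ex:gaussians}: since $\Sigma\in\cm(\cg)$, we may write $\Sigma = \phi_\cg(\Lambda,\Omega) = (\eye_p - \Lambda)^{-T}\Omega(\eye_p-\Lambda)^{-1}$ with $\Omega = \diag(\omega_1,\ldots,\omega_p)$. After relabeling the nodes by a topological order of $\cg$, the matrix $\Lambda$ is strictly triangular, so $\eye_p - \Lambda$ is triangular with unit diagonal and hence $\det(\eye_p-\Lambda) = 1$. Taking determinants of the product gives $\det\Sigma = \det\Omega = \prod_{i=1}^p \omega_i$, isolating the global determinant as the product of the error variances.

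Next I would identify each $\omega_i$ as a conditional variance. From the structural equations $X_i = \sum_{j\in\pa_\cg(i)}\lambda_{ij}X_j + \varepsilon_i$ with the $\varepsilon_i$ mutually independent, $\varepsilon_i$ is independent of every $\varepsilon_j$ with $j\neq i$; since each parent $X_j$ for $j\in\pa_\cg(i)$ is a function of the noise terms strictly preceding $i$ in the topological order, $\varepsilon_i$ is independent of $\vX_{\pa_\cg(i)}$. Consequently the conditional distribution of $X_i$ given $\vX_{\pa_\cg(i)}$ has variance exactly $\operatorname{Var}(\varepsilon_i) = \omega_i$. On the other hand, the marginal distribution of $\vX_{\fa_\cg(i)}$ is Gaussian with covariance matrix $\Sigma_{\fa_\cg(i)}$, and the conditional variance of its $i$-coordinate given the parent coordinates is the Schur complement of $\Sigma_{\pa_\cg(i)}$. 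Equating the two expressions yields
\[
\omega_i = \Sigma_{ii} - \Sigma_{i,\pa_\cg(i)}\,\Sigma_{\pa_\cg(i)}^{-1}\,\Sigma_{\pa_\cg(i),i}.
\]

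Finally I would apply the standard block-determinant (Schur complement) identity to $\Sigma_{\fa_\cg(i)}$, written in block form with the $\pa_\cg(i)$-block in the top-left corner and the scalar $\Sigma_{ii}$ in the bottom-right corner:
\[
\det\Sigma_{\fa_\cg(i)} = \det\Sigma_{\pa_\cg(i)}\cdot\left(\Sigma_{ii} - \Sigma_{i,\pa_\cg(i)}\Sigma_{\pa_\cg(i)}^{-1}\Sigma_{\pa_\cg(i),i}\right) = \omega_i\,\det\Sigma_{\pa_\cg(i)}.
\]
Dividing through gives $\omega_i = \det\Sigma_{\fa_\cg(i)}/\det\Sigma_{\pa_\cg(i)}$, and substituting back into $\det\Sigma = \prod_i\omega_i$ completes the argument. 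The only mild subtlety — the ``hard part,'' such as it is — lies in the second step, namely in carefully justifying that the error variance $\omega_i$ coincides with the conditional variance of $X_i$ given $\vX_{\pa_\cg(i)}$; this rests on the independence of $\varepsilon_i$ from the ancestors of $i$, which follows from the topological ordering together with the mutual independence of the noise terms.
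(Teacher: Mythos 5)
Your proof is correct. Note that the paper itself offers no argument for this lemma---it is stated with the remark that its ``proof is well-known'' and then used directly in the proof of Theorem~\ref{thm:IBIC}---so there is no internal proof to compare against; your write-up supplies a complete justification. All three steps are sound: $\det(\eye_p-\Lambda)=1$ under a topological relabeling (which leaves both sides of the identity unchanged), the identification $\omega_i = \operatorname{Var}(X_i \mid \vX_{\pa_\cg(i)})$ via independence of $\varepsilon_i$ from the parents, and the Schur-complement determinant identity for $\Sigma_{\fa_\cg(i)}$. Two minor points worth making explicit: when $\pa_\cg(i)=\emptyset$ the ratio must be read with the convention $\det\Sigma_\emptyset = 1$, in which case your chain of identities degenerates correctly to $\omega_i = \Sigma_{ii}$; and invertibility of $\Sigma_{\pa_\cg(i)}$ is guaranteed because the paper's Gaussian DAG model consists of positive definite matrices. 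For comparison, the other standard route---the one the paper effectively mirrors inside the proof of Theorem~\ref{thm:IBIC} when it derives $\det\hat\Sigma^{(k)}$ as a product of ratios of sample determinants---is to factor the joint density as $\prod_i f(x_i\mid \vx_{\pa_\cg(i)})$ and match normalizing constants, which yields $\det\Sigma = \prod_i \operatorname{Var}(X_i\mid \vX_{\pa_\cg(i)})$ without ever invoking the structural-equation parametrization; your computation $\det\Sigma=\det\Omega$ achieves the same reduction purely algebraically, at the cost of needing the parametrization $\Sigma=\phi_\cg(\Lambda,\Omega)$, which the paper provides in Example~\ref{ex:gaussians}.
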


\begin{proof}[Proof of Theorem~\ref{thm:IBIC}]
Since the likelihood function for the data $\dd = ([x_{i,j}^k] : k\in \{0,\ldots, K\})$ for a Gaussian $\ci$-DAG model with parameter vector $\vec{\theta}$ is 
\[
L(\vec{\theta} | \cg, \ci, \dd) = \prod_{k=0}^K\prod_{j=1}^{n_k}f^k(\vx_{:,j}^k | \vec{\theta}),
\]
where $f^k$ is the density function for a multivariate normal distribution with parameters $(\vec{\lambda}^k, \vec{\omega}^k)$, it follows that
\begin{equation*}
    \begin{split}
        L(\hat{\vec{\theta}} | \cg, \ci, \dd)
        &= \prod_{k=0}^K\left(\frac{1}{(2\pi)^{pn_k/2}(\det\hat\Sigma^{(k)})^{n_k/2}}\right)\prod_{j=1}^{n_k}\exp\left(-\frac{1}{2}(\vx_{:,j}^k)^T\hat K^{(k)}\vx_{:,j}^k\right),
    \end{split}
\end{equation*}
where $\hat{\vec{\theta}}$ denotes the MLE for the model with DAG $\cg$ and intervention targets $\ci$ according to Theorem~\ref{lem:IMLE}.
As seen in the model definition in Example~\ref{ex:interventionalgaussian}, the number of free parameters in this interventional model is
\[
p + |E| + \sum_{k=1}^K\left(|I_k| + \sum_{i\in I_k}|\pa_\cg(i)|\right).
\]
It follows that
\begin{equation*}
    \begin{split}
        \BIC(\cg, \ci: \dd) 
        &= \sum_{k=0}^K\left(-\frac{pn_k}{2}\log(2\pi) - \frac{n_k}{2}\log\det\hat\Sigma^{(k)} - \frac{1}{2}\sum_{j=1}^{n_k}(\vx_{:,j}^k)^T\hat K^{(k)}\vx_{:,j}^k\right)\\
        & \qquad -\frac{\log(n)(p + |E|)}{2} - \frac{\log(n)}{2}\sum_{k=1}^K\left(|I_k| + \sum_{i\in I_k}|\pa_\cg(i)|\right),
    \end{split}
\end{equation*}
where we let $n$ denote the number of samples in the data array $\dd$; e.g., $n = n_0 + \cdots + n_K$. 
Distributing the first sum and applying Lemma~\ref{lem:det}, we obtain
\begin{equation*}
    \begin{split}
        \BIC(\cg, \ci: \dd) 
        &= \sum_{k=0}^K-\frac{pn_k}{2}\log(2\pi) - \sum_{k=0}^K\frac{n_k}{2}\log\prod_{i=1}^p\frac{\det\hat\Sigma_{\fa_\cg(i)}^{(k)}}{\det\hat\Sigma_{\pa_\cg(i)}^{(k)}} - \sum_{k=0}^K\frac{1}{2}\sum_{j=1}^{n_k}(\vx_{:,j}^k)^T\hat K^{(k)}\vx_{:,j}^k\\
        & \qquad -\frac{\log(n)(p + |E|)}{2} - \frac{\log(n)}{2}\sum_{k=1}^K\left(|I_k| + \sum_{i\in I_k}|\pa_\cg(i)|\right),\\
        &= \sum_{k=0}^K-\frac{pn_k}{2}\log(2\pi) - \sum_{k=0}^K\sum_{i=1}^p\left(\frac{n_k}{2}\log\det\hat\Sigma_{\fa_\cg(i)}^{(k)} - \frac{n_k}{2}\log\det\hat\Sigma_{\pa_\cg(i)}^{(k)}\right) \\
        & \qquad - \sum_{k=0}^K\frac{1}{2}\sum_{j=1}^{n_k}(\vx_{:,j}^k)^T\hat K^{(k)}\vx_{:,j}^k\\
        & \qquad -\frac{\log(n)(p + |E|)}{2} - \frac{\log(n)}{2}\sum_{k=1}^K\left(|I_k| + \sum_{i\in I_k}|\pa_\cg(i)|\right),\\
        &= \sum_{k=0}^K-\frac{pn_k}{2}\log(2\pi) - \sum_{i=1}^p\sum_{k=0}^K\left(\frac{n_k}{2}\log\det\hat\Sigma_{\fa_\cg(i)}^{(k)} - \frac{n_k}{2}\log\det\hat\Sigma_{\pa_\cg(i)}^{(k)}\right) \\
        & \qquad - \sum_{k=0}^K\frac{1}{2}\sum_{j=1}^{n_k}(\vx_{:,j}^k)^T\hat K^{(k)}\vx_{:,j}^k\\
        & \qquad -\frac{\log(n)(p + |E|)}{2} - \frac{\log(n)}{2}\sum_{k=1}^K\left(|I_k| + \sum_{i\in I_k}|\pa_\cg(i)|\right).\\
    \end{split}
\end{equation*}
The final line above is the negative of the penalization term, which in this case is equal to 
\begin{equation*}
    \begin{split}
        \textrm{pen} &= \frac{\log(n)}{2}\sum_{i = 1}^p\left[\left[\binom{|\fa_\cg(i)|}{2} + \sum_{k\in Z_i}\binom{|\fa_\cg(i)|}{2}\right] - \left[\binom{|\pa_\cg(i)|}{2} + \sum_{k\in Z_i}\binom{|\pa_\cg(i)|}{2}\right]\right]\\
        & \qquad + \frac{\log(n)}{2}\left(p + \sum_{k=1}^K|I_k|\right).
    \end{split}
\end{equation*}
Applying Lemma~\ref{lem:IMLE} and recalling that $Z_i = \{k \in [K] : i\in I_k\}$ and $Z_i^c = \{k\in \{0,1,\ldots, K\} : k\notin Z_i\}$, we obtain

\begin{equation*}
    \begin{split}
        \BIC(\cg, \ci; \dd)
        &= -\sum_{k=0}^K\frac{pn_k}{2}\log(2\pi) + \sum_{i = 1}^p \left[-\sum_{k=0}^K\left(\frac{n_k}{2}\log\det\hat\Sigma_{\fa_\cg(i)}^{(k)} - \frac{n_k}{2}\log\det\hat\Sigma_{\pa_\cg(i)}^{(k)}\right)\right.\\
        & \qquad -\sum_{k\in Z_i^c}\left(\frac{1}{2}\sum_{j=1}^{n_k}(\vx_{:,j}^k)^T\left(P_{\fa(i)}^{\pool(Z_i^c)}\right)^{\Phil}\vx_{:,j}^k - \frac{1}{2}\sum_{j=1}^{n_k}(\vx_{:,j}^k)^T\left(P_{\pa(i)}^{\pool(Z_i^c)}\right)^{\Phil}\vx_{:,j}^k \right)\\
        & \qquad \left. -\sum_{k\in Z_i}\left(\frac{1}{2}\sum_{j=1}^{n_k}(\vx_{:,j}^k)^T\left(P_{\fa(i)}^{(k)}\right)^{\Phil}\vx_{:,j}^k \right. \left.- \frac{1}{2}\sum_{j=1}^{n_k}(\vx_{:,j}^k)^T\left(P_{\pa(i)}^{(k)}\right)^{\Phil}\vx_{:,j}^k \right)\right]\\
        & \qquad - \textrm{pen}.
    \end{split}
\end{equation*}

Notice next that by \eqref{eqn: max at marg sampcov}, the probability density function of a normal distribution with parameters given by the MLE $\hat \Sigma^{(k)}$ is equal to the product of functions
\[
\prod_{i\notin I_k}\frac{f^0\left(\vx_{\fa_\cg(i)} | S_{\fa_\cg(i)}^{\pool(Z_i^c)}\right)}{f^0\left(\vx_{\pa_\cg(i)} | S_{\pa_\cg(i)}^{\pool(Z_i^c)}\right)}\prod_{i\in I_k}\frac{f^k\left(\vx_{\fa_\cg(i)} | S_{\fa_\cg(i)}^{(k)}\right)}{f^k\left(\vx_{\pa_\cg(i)} | S_{\pa_\cg(i)}^{(k)}\right)}.
\]
It follows that
\begin{equation*}
    \begin{split}
        f(\vx | \hat\Sigma^{(k)}) &= \prod_{i\notin I_k}\frac{f^0\left(\vx_{\fa_\cg(i)} | S_{\fa_\cg(i)}^{\pool(Z_i^c)}\right)}{f^0\left(\vx_{\pa_\cg(i)} | S_{\pa_\cg(i)}^{\pool(Z_i^c)}\right)}
        \prod_{i\in I_k}\frac{f^k\left(\vx_{\fa_\cg(i)} | S_{\fa_\cg(i)}^{(k)}\right)}{f^k\left(\vx_{\pa_\cg(i)} | S_{\pa_\cg(i)}^{(k)}\right)},\\
        \frac{e^{-\frac{1}{2}\vx^T\hat K^{(k)} \vx}}{(2\pi)^{p/2}}\frac{1}{\left(\det\hat\Sigma^{(k)}\right)^{1/2}} &= \frac{e^{-\frac{1}{2}\vx^T\hat K^{(k)} \vx}}{(2\pi)^{p/2}}\prod_{i\notin I_k}\frac{\left(\det S_{\pa_\cg(i)}^{\pool(Z_i^c)}\right)^{1/2}}{\left(\det S_{\fa_\cg(i)}^{\pool(Z_i^c)}\right)^{1/2}}\prod_{i\in I_k}\frac{\left(\det S_{\pa_\cg(i)}^{(k)}\right)^{1/2}}{\left(\det S_{\fa_\cg(i)}^{(k)}\right)^{1/2}}.
    \end{split}
\end{equation*}
Therefore, 
\[
\det\hat\Sigma^{(k)} = \prod_{i\notin I_k}\frac{\det S_{\fa_\cg(i)}^{\pool(Z_i^c)}}{\det S_{\pa_\cg(i)}^{\pool(Z_i^c)}}\prod_{i\in I_k}\frac{\det S_{\fa_\cg(i)}^{(k)}}{\det S_{\pa_\cg(i)}^{(k)}}
\]
Applying this observation to the formula for $\BIC(\cg, \ci; \dd)$ above, we obtain
\begin{equation*}
    \begin{split}
        \BIC(\cg, \ci; \dd)
        &= -\sum_{k=0}^K\frac{pn_k}{2}\log(2\pi) + \sum_{i = 1}^p \left[-\sum_{k\in Z_i^c}\left(\frac{n_k}{2}\log\det S_{\fa_\cg(i)}^{\pool(Z_i^c)} - \frac{n_k}{2}\log\det S_{\pa_\cg(i)}^{\pool(Z_i^c)}\right)\right.\\
        & \qquad -\sum_{k\in Z_i}\left(\frac{n_k}{2}\log\det S_{\fa_\cg(i)}^{(k)} - \frac{n_k}{2}\log\det S_{\pa_\cg(i)}^{(k)}\right)\\
        & \qquad -\sum_{k\in Z_i^c}\left(\frac{1}{2}\sum_{j=1}^{n_k}(\vx_{:,j}^k)^T\left(P_{\fa(i)}^{\pool(Z_i^c)}\right)^{\Phil}\vx_{:,j}^k - \frac{1}{2}\sum_{j=1}^{n_k}(\vx_{:,j}^k)^T\left(P_{\pa(i)}^{\pool(Z_i^c)}\right)^{\Phil}\vx_{:,j}^k \right)\\
        & \qquad -\left. \sum_{k\in Z_i}\left(\frac{1}{2}\sum_{j=1}^{n_k}(\vx_{:,j}^k)^T\left(P_{\fa(i)}^{(k)}\right)^{\Phil}\vx_{:,j}^k \right. \left.- \frac{1}{2}\sum_{j=1}^{n_k}(\vx_{:,j}^k)^T\left(P_{\pa(i)}^{(k)}\right)^{\Phil}\vx_{:,j}^k \right)\right]\\
        & \qquad - \textrm{pen}.
    \end{split}
\end{equation*}

Grouping by $\fa_\cg(i)$ and $\pa_\cg(i)$, we obtain 
$
\BIC(\cg, \ci; \dd) = C + \sum_{i=1}^p(\alpha_{\pa_\cg(i)\sqcup \mathcal{Z}_i} - \alpha_{\fa_\cg(i)\sqcup \mathcal{Z}_i}),
$
where
\[
    C = -\sum_{k=0}^K\frac{pn_k}{2}\log(2\pi) - \frac{\log(n)}{2}\left(p + \sum_{k=1}^K |I_k|\right).
\]
    
\end{proof}

\end{document}